\definecolor{navy}{RGB}{0,0,128}
\definecolor{royalblue}{RGB}{65,105,225}
\definecolor{accent}{RGB}{0,100,148} 
\tikzset{%
  symbol/.style={
    draw=none,
    every to/.append style={
      edge node={node [sloped, allow upside down, auto=false]{$#1$}}
    },
  },
}
\let\LATEXth\th
\let\th\LATEXth
\newtheorem{theorem}{Theorem}[section]
\newtheorem{lemma}[theorem]{Lemma}
\newtheorem*{question}{Question}
\newtheorem{proposition}[theorem]{Proposition}
\newtheorem{corollary}[theorem]{Corollary}
\newtheorem*{summary*}{Summary}
\theoremstyle{definition}
\newtheorem{definition}[theorem]{Definition}
\newtheorem{example}[theorem]{Example}
\theoremstyle{remark}
\newtheorem{remark}[theorem]{Remark}
\numberwithin{equation}{section}
\DeclareMathOperator{\End}{End} 
\DeclareMathOperator{\Hom}{Hom}
\DeclareMathOperator{\Sing}{Sing}
\DeclareMathOperator{\Gr}{Gr}
\providecommand{\cO}{\mathcal{O}}
\providecommand{\Gm}{\mathbb{G}_{\mathrm m}}
\providecommand{\PP}{\mathbb{P}}
\providecommand{\Spec}{\operatorname{Spec}}
\providecommand{\Proj}{\operatorname{Proj}}
\providecommand{\PP}{\mathbb{P}}
\providecommand{\Pic}{\operatorname{Pic}}
\providecommand{\GL}{\operatorname{GL}}
\providecommand{\PGL}{\operatorname{PGL}}
\providecommand{\SL}{\operatorname{SL}}
\providecommand{\Br}{\operatorname{Br}}
\providecommand{\per}{\operatorname{per}}
\providecommand{\End}{\operatorname{End}}
\providecommand{\Mat}{\operatorname{Mat}}
\providecommand{\codim}{\operatorname{codim}}
\providecommand{\Hilb}{\operatorname{Hilb}}
\newcommand{\Z}{\mathbb{Z}}
\newcommand{\id}{\mathrm{id}}
\newcommand{\ind}{\mathrm{ind}}
\DeclareMathOperator{\PU}{PU}
\definecolor{QGLBlue}{RGB}{65,105,225}   
\definecolor{QGLRed}{RGB}{203,65,84}     
\definecolor{QGLGreen}{RGB}{34,139,34}   
\definecolor{QGLItem}{RGB}{255,245,204} 
\definecolor{QGLItem}{RGB}{65,105,225}
\newtcolorbox{HLblock}{enhanced,breakable,
  colback=QGLBlue!5, colframe=QGLBlue!5, 
  boxrule=0pt, arc=0pt, outer arc=0pt,
  left=2pt,right=2pt,top=1.5pt,bottom=1.4pt}
\begin{document}

\title[Quantum Entanglement Geometry on Severi--Brauer Schemes]%
      {Quantum Entanglement Geometry on Severi--Brauer Schemes:
       Subsystem Reductions of Azumaya Algebras}

\author{Kazuki Ikeda\,\orcidlink{0000-0003-3821-2669}}
\address{}
\curraddr{}

\email{kazuki.ikeda@umb.edu}
\address{Department of Physics, University of Massachusetts Boston, USA}
\address{Center For Nuclear Theory, Department of Physics and Astronomy, Stony Brook University, USA}

\thanks{The author expresses gratitude to Steven Rayan for his careful reading of the manuscript and for his invaluable comments. This work was partially supported by the NSF under Grant No. OSI-2328774.}

\subjclass[2020]{Primary 14F22; Secondary 16K50, 14C05, 14M12, 14M15, 14L30, 81P40.}
\date{}

\dedicatory{}
\begin{abstract}
Quantum entanglement is a basic resource of quantum theory, but its usual definition assumes a fixed decomposition into subsystems. We develop an algebro--geometric framework to address quantum entanglement. For the Severi--Brauer scheme \(SB(\mathcal A)\to X\) of an Azumaya algebra and a factorization type \(\mathbf d\), we prove that a global locus of product states exists exactly when the associated torsor under \(\PGL_n\) reduces to the stabilizer \(G_{\mathbf d}\) of the Segre variety. Thus entanglement in families is measured by a geometric obstruction to global subsystem structure.

Beyond this reduction criterion, we identify the cohomology that carries computable entangling obstructions. After determinant and incidence data \(L\subset R\subset K\) are chosen, the relevant classes lie in the mapping cone relative cohomology \(E^\bullet(BL\to BR)\). Their absolute images on \(X\), modulo classes induced from \(BK\), give determinant entangling obstructions beyond the Brauer class. We construct the moduli of subsystem structures as the quotient \(P/G_{\mathbf d}\) and realize it as a locally closed locus in the relative Hilbert scheme, with a compactification by degenerations of product state loci. In the bipartite case, a chosen subsystem structure globalizes the Schmidt rank stratification to a flat filtration with base change compatible resolutions and fiberwise constant numerical invariants. Examples show that reducibility can depend on the underlying torsor, not only on the Brauer class, with an interpretation by entangling monodromy.
\end{abstract}

\maketitle
\setcounter{tocdepth}{1} 
\tableofcontents

\section{Introduction}
\label{sec:introduction}

\subsection*{Entanglement as geometry}
For pure states, entanglement is defined after choosing a tensor product decomposition. Given a factorization
\[
\mathcal H \cong \mathcal H_1 \otimes \cdots \otimes \mathcal H_s, \qquad \dim(\mathcal H_i)=d_i,\ \ \prod_i d_i=n,
\]
the product states form the Segre variety
\[
\Sigma_{\mathbf d}:=\PP^{d_1-1}\times\cdots\times \PP^{d_s-1}\hookrightarrow \PP^{n-1}.
\]
This is the locus of product states of type $\mathbf d=(d_1,\dots,d_s)$. The entangled pure states are the points outside this Segre variety. 

The starting point of this paper is that the usual definition of entanglement presupposes the
\emph{existence of a global subsystem structure}.
In many geometric and physical situations one encounters families of pure-state spaces over a base $X$, where the fibers are projective spaces only locally and may be globally twisted. In such a family, one can often choose a tensor factorization on a simply connected chart, but it is not
automatic that these local subsystem choices transport and glue globally. When they do not, there is no globally defined product-state locus, hence no globally meaningful notion of ``entangled vs.\ product'' states for the family.

\subsection*{Known facts: Severi--Brauer families as twisted pure-state spaces}
Let $k$ be an algebraically closed field of characteristic $0$, and let $X$ be a smooth quasi-projective $k$--scheme. A degree-$n$ Azumaya algebra $\mathcal A$ determines a principal $\PGL_n$--torsor $P\to X$ and the associated Severi--Brauer scheme
\[
SB(\mathcal A)=P\times^{\PGL_n}\PP^{n-1}\to X,
\]
which is fppf-locally a trivial $\PP^{n-1}$--bundle. From the viewpoint of quantum information, $SB(\mathcal A)\to X$ is an algebro--geometric model for a \emph{family of pure-state spaces} whose projective trivializations may carry nontrivial twisting.

Grothendieck's theory of Azumaya algebras and the Brauer group provides the natural language for such families
\cite{Grothendieck1968BrauerI,Grothendieck1968BrauerII,Grothendieck1968BrauerIII} (see also \cite{Azumaya1951MaximallyCentral,AuslanderGoldman1960Brauer}).
These tools describe the twisting of the projective state bundle. They do not, however, choose a Segre locus inside $SB(\mathcal A)$.

\subsection*{The remaining problems}
Fix a factorization type $\mathbf d=(d_1,\dots,d_s)$ with $n=\prod_i d_i$, let $\Sigma_{\mathbf d}\subset \mathbb{P}^{n-1}$ be the Segre variety, and denote by $G_{\mathbf d}\subset \mathrm{PGL}_n$ its stabilizer (Definition \ref{def:segrestab}).

\begin{question}
We address the following questions.
\begin{enumerate}
\item How should quantum entanglement be defined for a Severi--Brauer fibration $\pi:SB(\mathcal A)\to X$?
\item When does $SB(\mathcal A)$ contain a relative Segre subscheme of type $\mathbf d$? Equivalently, when does the associated torsor under $\mathrm{PGL}_n$ admit a reduction to $G_{\mathbf d}$?
\item When such a reduction does not exist, what is the obstruction, and how should it be interpreted in quantum information?
\end{enumerate}
\end{question}

Unlike the standard untwisted setting, the subsystem choice is itself a local-to-global datum. After a local trivialization $SB(\mathcal A)|_U\simeq\mathbb P^{n-1}\times U$, the Segre variety $\Sigma_{\mathbf d}$ appears in each fiber. The twisting of the family may nevertheless prevent these local loci of product states from gluing. Without a global locus of product states, there is no uniform notion of product versus entangled states over $X$.

\subsection*{Our approach}
\paragraph{\textbf{(1) Stabilizer reductions and the obstruction to a global locus of product states.}}
Our first main result identifies this reduction problem with the geometric existence of a descended Segre locus.

The reduction criterion is the following. A relative Segre subscheme of type $\mathbf d$ exists in $SB(\mathcal A)$ \emph{if and only if} the torsor $P\to X$ under $\PGL_n$ admits a reduction of structure group to $G_{\mathbf d}$ (Corollary~\ref{cor:subsystem-structure-iff-segre}). Such a reduction is a subsystem structure of type $\mathbf d$ (Definition~\ref{def:subsystem-structure}). In this paper the failure of such a reduction is the obstruction to a global notion of product states.

In this sense, the obstruction measures the failure of local product state loci to globalize. When the reduction fails, transition functions necessarily exit the stabilizer $G_{\mathbf d}$ and the local product-state loci cannot be transported globally. In particular, a state that is locally a product state can become entangled after gluing along the base.

The Brauer class gives a necessary condition. We introduce $\Br_{\mathbf d}(X)\subset \Br(X)$, the subset of Brauer classes that admit at least one degree-$n$ Azumaya representative equipped with a subsystem structure of type $\mathbf d$ (Definition~\ref{def:br-d}). We prove a general torsion constraint
\[
\Br_{\mathbf d}(X)\subset \Br(X)\bigl[\mathrm{lcm}(\mathbf d)\bigr]
\qquad\text{(Proposition~\ref{prop:lcm-period-obstruction}),}
\]
and develop examples showing that the obstruction is not determined by the Brauer class and can depend on the isomorphism class of the torsor under $\PGL_n$.

\medskip
\paragraph{\textbf{(2) Moduli of subsystem structures as a Hilbert scheme locus}}
Subsystem structures, when they exist, are parametrized by a moduli space. We show that the functor of $G_{\mathbf d}$--reductions of $P$ is represented by the quotient $P/G_{\mathbf d}$, which embeds naturally as a locally closed subscheme of the relative Hilbert scheme:
\[
P/G_{\mathbf d}\ \cong\ \Hilb^{\Sigma_{\mathbf d}}\!\bigl(SB(\mathcal A)/X\bigr)
\ \subset\ \Hilb\bigl(SB(\mathcal A)/X\bigr)
\qquad\text{(Theorem~\ref{thm:subsystem-moduli-hilbert}).}
\]
In characteristic $0$ this locus is smooth over $X$ and étale-locally a trivial $\PGL_n/G_{\mathbf d}$--fibration (Proposition~\ref{prop:hilb-locus-geometry}). For $\mathbf d=(2,2)$ the relevant Hilbert component is $\PP^9$, and the boundary of the open Segre orbit is the (discriminant) divisor of singular quadrics (Proposition~\ref{prop:Hilb_quadric_P9} and Corollary~\ref{cor:relative_boundary_for_twisted}). This gives a compactification of $P/G_{\mathbf d}$ by degenerations inside the relative Hilbert scheme.

\medskip
\paragraph{\textbf{(3) From reductions to entanglement geometry}}
From Section~\ref{sec:schmidt} onward we specialize to the bipartite case $\mathbf d=(d_A,d_B)$. The determinantal part of entanglement geometry is \emph{not} available on a general Severi--Brauer family without extra structure. A twisted $\PP^{n-1}$--bundle does not come with a global tensor identification. Once a bipartite subsystem structure is fixed, the classical rank loci globalize to a flat filtration, which we call the \emph{entanglement filtration},
\[
\Sigma_{\le 1}(\mathcal A,\mathbf d)\subset \Sigma_{\le 2}(\mathcal A,\mathbf d)\subset \cdots \subset SB(\mathcal A),
\]
with base-change compatible incidence resolutions and a uniform singularity theory (Theorem~\ref{thm:relative-sing-and-resolution}). Moreover, the resulting intersection-theoretic and numerical data are fiberwise constant and computable (Theorem~\ref{thm:relative-numerics}, Corollary~\ref{cor:twisted_degree_pushforward}). Thus the chosen subsystem structure determines which relative rank loci and intersection classes are defined, and these constructions are compatible with base change.

\medskip
\paragraph{\textbf{(4) Explicit obstructions beyond the Brauer class}}
We give explicit obstruction computations. For instance, on $X=\Gm^2$ the generic Kummer $p^2$--symbol class does not admit a global $(p,p)$ subsystem structure (Theorem~\ref{thm:kummer-not-in-Brpp}), while we also construct nontrivial twisted degree-$p^2$ families that do admit a global $(p,p)$ reduction (Proposition~\ref{prop:pp_symbol_tensor_reducible} and Example~\ref{ex:pp_symbol_tensor_degree}). We further treat situations where different local tensor types occur on different charts (Example~\ref{ex:8-level}), so that the effective ``particle number'' depends on the locally chosen subsystem.

A recurrent theme is that subsystem reducibility is not determined by the Brauer class. Even for $\beta=0$ it can depend on the isomorphism class of the torsor under $\PGL_n$ (Proposition~\ref{prop:not-brauer-invariant}). On $\PP^1$ we obtain a complete splitting-type classification in the bipartite case (Theorem~\ref{thm:P1-bipartite-reducibility}).

Section~\ref{sec:algebraic-entangling-obstructions} gives the obstruction data not determined by the Brauer class. It starts with the stack of reductions to $G_{\mathbf d}$. The later constructions require determinant and incidence data and give relative motivic, Chow, and \'{e}tale classes on $X$.

\medskip
\paragraph{\textbf{(5) Quantum information interpretation through entangling monodromy}}
When $k=\mathbb C$, local trivializations on $X^{\mathrm{an}}$ can acquire monodromy in $\PGL_n(\mathbb C)$. Failure of reduction to $G_{\mathbf d}$ forces some transition elements outside $G_{\mathbf d}$, producing an \emph{entangling monodromy} acting on locally defined product states (Proposition~\ref{prop:entangling-monodromy-vs-reduction}). In a four-level model this monodromy is locally equivalent to a $\mathrm{CNOT}$-type gate
(Theorem~\ref{thm:p2-monodromy-CNOT}). Appendix~\ref{sec:spin-chain-toy} gives a spin chain example on a torus in which a locally product ground state becomes entangled after gluing.

We distinguish two local-to-global obstruction problems:
\begin{itemize}
  \item[\textbf{(I)}] \textbf{Obstruction of structure (this paper).}
  Whether a tensor-product type $\mathbf d$ can be chosen globally on the Severi--Brauer family, equivalent to the existence of a reduction of $P$ to $G_{\mathbf d}$ (equivalently, by the existence of a descended Segre subfibration).
  \item[\textbf{(II)}] \textbf{Obstruction of states (e.g.\ \cite{Ikeda:2025mgj,Abramsky:2011sbx}).} After fixing the background and a subsystem structure, whether local state data glue to a global state, and whether that global state is unique.
\end{itemize}
Both are local-to-global in nature, but they occur at different layers (structure versus states).

\subsection*{Organization}
Section~\ref{sec:azumaya-subsystems} introduces subsystem structures as $G_{\mathbf d}$--reductions and the subset $\Br_{\mathbf d}(X)\subset \Br(X)$. Section~\ref{subsec:segre-hilbert-locus} identifies the moduli of reductions with a canonically defined Hilbert scheme locus (Theorem~\ref{thm:subsystem-moduli-hilbert}). In the bipartite case, Section~\ref{sec:schmidt} develops the relative determinantal filtration, including uniform resolutions and numerical invariants. Examples and obstructions appear in Section~\ref{sec:examples}. Section~\ref{sec:algebraic-entangling-obstructions} gives the obstruction data not determined by the Brauer class. It separates the stack of reductions to $G_{\mathbf d}$, determinant lifting stacks, incidence lift groupoids, and relative cohomology classes on $X$. Some numerical invariants are collected in Section~\ref{sec:numerics}.

\section{Azumaya algebras and subsystem structures}
\label{sec:azumaya-subsystems}
\subsection{Azumaya algebras and subsystem structures}
\begin{definition}[Azumaya algebra of degree $n$]
An \emph{Azumaya algebra} of degree $n$ on $X$ is a sheaf of $\mathcal O_X$--algebras $\mathcal A$ which is fppf-locally isomorphic to $M_n(\mathcal O_X)$. It is associated to a principal $\PGL_n$--torsor $P\to X$ via
\[
  \mathcal A \;\cong\; P\times^{\PGL_n} \bigl(M_n(k)\otimes_k \mathcal O_X\bigr)
  \;\cong\; P\times^{\PGL_n} M_n(\mathcal O_X),
\]
where $\PGL_n$ acts on $M_n(k)$ by conjugation and acts trivially on $\mathcal O_X$ (so the action on $M_n(k)\otimes_k\mathcal O_X$ is $\mathcal O_X$--linear).
\end{definition}

\begin{definition}[Severi--Brauer scheme]
\label{def:SB}
Let $\mathcal A$ be an Azumaya algebra of degree $n$ on $X$. Its \emph{Severi--Brauer scheme} is the $X$--scheme
\[
  \pi:SB(\mathcal A)\longrightarrow X
\]
characterized fppf-locally by
$SB(\mathcal A)|_U \cong \mathbb P^{n-1}\times U$ whenever $\mathcal A|_U\cong M_n(\mathcal O_U)$. It represents locally free right ideals $I\subset \mathcal A$ of reduced rank $1$.
\end{definition}

We write $\Br(X):=H^2_{\mathrm{fppf}}(X,\mathbb G_m)_{\mathrm{tors}}$ for the Brauer group and denote by $\beta=[\mathcal A]\in \Br(X)$ the Brauer class of $\mathcal A$.

Let $\Sigma_{\mathbf d}\subset \mathbb P^{n-1}$ denote the Segre variety (\emph{locus of product states of type $\mathbf d$})
\[
  \Sigma_{\mathbf d}
  := \mathbb P^{d_1-1}\times\cdots\times \mathbb P^{d_s-1}
  \hookrightarrow \mathbb P(\,k^{d_1}\otimes\cdots\otimes k^{d_s}\,)
  \cong \mathbb P^{n-1}.
\]

\begin{definition}[Stabilizer]
\label{def:segrestab}
Define $G_{\mathbf d}\subset \PGL_n$ to be the (scheme-theoretic) stabilizer of the Segre subscheme:
\[
  G_{\mathbf d}
  := \{\, g\in \PGL_n \mid g(\Sigma_{\mathbf d})=\Sigma_{\mathbf d}\,\}.
\]
\end{definition}

\begin{remark}
The connected component of $G_{\mathbf d}$ contains the image of the tensor product representation $\prod_i \GL_{d_i}\to \GL_n\to \PGL_n$. If some factors have equal dimension, $G_{\mathbf d}$ may contain additional discrete symmetries permuting the equal Segre factors. None of our results depend on choosing an ordering convention.
\end{remark}

\begin{remark}[Structure of the stabilizer]
It is classical that the (projective) linear automorphisms preserving the product-state locus $\Sigma_{\mathbf d}\subset \PP^{n-1}$ form $\bigl(\prod_i \PGL_{d_i}\bigr)\rtimes\Pi$, where $\Pi$ permutes the equal-dimensional factors. In particular, $G_{\mathbf d}$ is smooth in $\mathrm{char}(k)=0$. See e.g.\ Westwick \cite{westwick1967transformations} (and a modern summary in \cite[Prop.~2.2]{2024arXiv240716767G}).
\end{remark}

\begin{definition}[Subsystem structure of type $\mathbf d$]
\label{def:subsystem-structure}
Let $\mathcal A$ be an Azumaya algebra of degree $n$ with associated $\PGL_n$--torsor $P\to X$ ($\PP^{n-1}$--bundle). A \emph{subsystem structure of type $\mathbf d$} on $\mathcal A$ is a reduction of structure group of $P$ to $G_{\mathbf d}$, consisting of a principal $G_{\mathbf d}$--torsor $P_{\mathbf d}\to X$ and an identification
\[
  P \;\cong\; P_{\mathbf d}\times^{G_{\mathbf d}} \PGL_n.
\]
\end{definition}

\begin{definition}[Brauer classes admitting subsystem structures of type $\mathbf d$]
\label{def:br-d}
Let $\mathbf d=(d_1,\dots,d_s)$ with $n=\prod_i d_i$ and let $G_{\mathbf d}\subset \PGL_n$ be the stabilizer. Define
\[
  \Br_{\mathbf d}(X)
  := \mathrm{Im}\Bigl(H^1(X,G_{\mathbf d})\to H^1(X,\PGL_n)\xrightarrow{\delta} \Br(X)\Bigr),
\]
where $\delta$ is the connecting morphism associated to $1\to \mathbb G_m\to \GL_n\to \PGL_n\to 1$ in fppf cohomology.
\end{definition}
Here $\beta\in \Br_{\mathbf d}(X)$ means that there exists at least one $\PGL_n$--torsor (Azumaya algebra of degree $n$) representing $\beta$ that admits a $G_{\mathbf d}$--reduction.

\begin{definition}[Brauer-theoretic obstruction to $\mathbf d$--subsystem structures]
\label{def:intrinsic-obstruction}
A Brauer class $\beta\in \Br(X)$ is \emph{$\mathbf d$--inadmissible} if $\beta\notin \Br_{\mathbf d}(X)$, meaning that no Azumaya algebra of degree $n$ representing $\beta$ admits a subsystem structure of type $\mathbf d$.
\end{definition}

\begin{remark}
In the quantum information interpretation, $\mathbf d$--inadmissibility means that the twisting encoded by $\beta$ forbids the existence of a global notion of ``product states of type $\mathbf d$'' on the family $SB(\mathcal A)\to X$. Thus the family has no global locus of product states of type $\mathbf d$. See Example \ref{ex:8-level}.
\end{remark}

\begin{theorem}
\label{thm:reduction-criterion}
Let $\mathcal A$ be an Azumaya algebra of degree $n$ with associated $\PGL_n$--torsor $P\to X$. Fix $\mathbf d$ and $G_{\mathbf d}\subset \PGL_n$.

\begin{enumerate}[label=\textup{(\roman*)}]
\item \textbf{Torsor level:} The algebra $\mathcal A$ admits a subsystem structure of type $\mathbf d$ if and only if
\[
  [P]\in \mathrm{Im}\bigl(H^1(X,G_{\mathbf d})\to H^1(X,\PGL_n)\bigr).
\]
\item \textbf{Brauer level:} If $\mathcal A$ admits a subsystem structure of type $\mathbf d$, then its Brauer class $[\mathcal A]=\delta([P])$ lies in $\Br_{\mathbf d}(X)$. Conversely, $\beta\in \Br_{\mathbf d}(X)$ guarantees the existence of some degree-$n$ Azumaya algebra $\mathcal A'$ representing $\beta$ that admits a subsystem structure of type $\mathbf d$, but this need not hold for every representative of $\beta$.
\end{enumerate}
\end{theorem}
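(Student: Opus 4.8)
The plan is to deduce both parts directly from the definitions of a $\mathbf d$--subsystem structure (Definition~\ref{def:subsystem-structure}) and of $\Br_{\mathbf d}(X)$ (Definition~\ref{def:br-d}), together with the standard formalism of reductions of structure group in nonabelian cohomology; there is essentially no computation, only bookkeeping to get right. Since $\mathrm{char}(k)=0$ and $G_{\mathbf d}\subset\PGL_n$ is smooth (Remark following Definition~\ref{def:segrestab}), I would work with $H^1(X,\PGL_n)$ and $H^1(X,G_{\mathbf d})$ as the pointed sets of isomorphism classes of torsors, computed in the étale (equivalently fppf) topology, and write $\iota_*\colon H^1(X,G_{\mathbf d})\to H^1(X,\PGL_n)$ for the map $[Q]\mapsto[Q\times^{G_{\mathbf d}}\PGL_n]$ induced by the inclusion $\iota$.

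For (i), I would note that a $\mathbf d$--subsystem structure on $\mathcal A$ is, by Definition~\ref{def:subsystem-structure}, precisely a $G_{\mathbf d}$--torsor $P_{\mathbf d}$ together with an isomorphism $P\cong P_{\mathbf d}\times^{G_{\mathbf d}}\PGL_n$; hence its existence is tautologically the statement that $[P]$ lies in $\mathrm{Im}(\iota_*)$. The forward implication is immediate, and for the converse one chooses, for a $G_{\mathbf d}$--torsor $Q$ with $\iota_*[Q]=[P]$, an isomorphism $Q\times^{G_{\mathbf d}}\PGL_n\xrightarrow{\ \sim\ }P$, which is the reduction datum. I would also record the geometric reformulation that reductions of $P$ to $G_{\mathbf d}$ are the sections of the associated fiber bundle $P\times^{\PGL_n}(\PGL_n/G_{\mathbf d})\to X$; this is the viewpoint later upgraded to the Hilbert-scheme description in Theorem~\ref{thm:subsystem-moduli-hilbert}.

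For (ii), the forward direction combines (i) with the definition of $\Br_{\mathbf d}(X)$: the connecting map $\delta\colon H^1(X,\PGL_n)\to H^2_{\mathrm{fppf}}(X,\Gm)$ is a morphism of pointed sets, and a degree-$n$ Azumaya algebra has Brauer class of period dividing $n$, so $\delta$ factors through $\Br(X)$ and $[\mathcal A]=\delta([P])$; thus if $[P]=\iota_*[P_{\mathbf d}]$ then $[\mathcal A]\in\mathrm{Im}(\delta\circ\iota_*)=\Br_{\mathbf d}(X)$. For the partial converse I would unwind Definition~\ref{def:br-d}: $\beta\in\Br_{\mathbf d}(X)$ supplies some $\xi\in H^1(X,G_{\mathbf d})$ with $\delta(\iota_*\xi)=\beta$, and taking $\mathcal A'$ to be the degree-$n$ Azumaya algebra of the torsor $\iota_*\xi$ yields $[\mathcal A']=\beta$ with a $\mathbf d$--subsystem structure by (i). The clause that this can fail for other representatives of $\beta$ is not formal; I would flag that it is substantiated separately by the examples of Section~\ref{sec:examples}, in particular Proposition~\ref{prop:not-brauer-invariant}, and prove here only the implications that hold in general.

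The one point requiring care --- rather than a genuine obstacle --- is the existential nature of $\Br_{\mathbf d}(X)$: membership guarantees only \emph{some} degree-$n$ representative admitting a reduction, so the Brauer-level statement is truly a one-way implication plus a non-uniform converse, and the write-up must not overstate it. The only input beyond definition-chasing is the correspondence ``reduction of structure group $\leftrightarrow$ preimage under $\iota_*$'', for which smoothness of $G_{\mathbf d}$ in characteristic $0$ suffices to pass freely between the étale and fppf settings.
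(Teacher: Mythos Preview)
Your proposal is correct and follows precisely the paper's approach: the paper's proof is a two-sentence appeal to the standard nonabelian cohomological criterion for reductions of structure group and then applying the boundary map $\delta$, and your write-up simply unpacks those sentences in detail. Your added remarks (the section-of-$P/G_{\mathbf d}$ reformulation and the pointer to Proposition~\ref{prop:not-brauer-invariant} for the non-formal clause) are consistent with and anticipated by the paper.
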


\begin{proof}
This is the standard nonabelian cohomological criterion for reductions of structure group, applied to the inclusion $G_{\mathbf d}\hookrightarrow \PGL_n$. The Brauer-theoretic statements follow by applying the boundary map $\delta$.
\end{proof}


\subsection{From subsystem structures to a descended product-state locus}
\label{subsec:descended-product-locus}
\begin{definition}
\label{def:SSB}
Assume $\mathcal A$ is equipped with a subsystem structure $P_{\mathbf d}\to X$ (Definition~\ref{def:subsystem-structure}). Define the associated $X$--scheme
\[
  \Sigma_{\mathbf d}(\mathcal A)
  \;:=\; P_{\mathbf d}\times^{G_{\mathbf d}} \Sigma_{\mathbf d}
  \;\longrightarrow\; X.
\]
By construction, $\Sigma_{\mathbf d}(\mathcal A)$ is fppf-locally isomorphic to $\Sigma_{\mathbf d}\times U\to U$.
\end{definition}

\begin{proposition}[Descended product-state embedding]
\label{prop:segre-descends}
There is a closed immersion, functorial in $X$,
\[
  \iota_{\mathbf d}:\Sigma_{\mathbf d}(\mathcal A)\hookrightarrow SB(\mathcal A)
\]
which is fppf-locally identified with the classical Segre embedding $\Sigma_{\mathbf d}\hookrightarrow \mathbb P^{n-1}$.
\end{proposition}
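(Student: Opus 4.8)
The plan is to construct $\iota_{\mathbf d}$ by transporting the classical Segre embedding through the twisting (associated-bundle) construction, the key point being that this embedding is $G_{\mathbf d}$-equivariant by the very definition of $G_{\mathbf d}$. By Definition~\ref{def:segrestab}, $G_{\mathbf d}\subset\PGL_n$ is the scheme-theoretic stabilizer of $\Sigma_{\mathbf d}\subset\mathbb P^{n-1}$, so the Segre inclusion $\Sigma_{\mathbf d}\hookrightarrow\mathbb P^{n-1}$ is a $G_{\mathbf d}$-equivariant closed immersion, where $G_{\mathbf d}$ acts on $\mathbb P^{n-1}$ through $G_{\mathbf d}\hookrightarrow\PGL_n$ and on $\Sigma_{\mathbf d}$ by restriction. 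First I would record the ``cancellation'' isomorphism for associated bundles: for $H\subset G$ and a $G$-scheme $Y$, there is a natural isomorphism $(Q\times^{H}G)\times^{G}Y\cong Q\times^{H}Y$ for any $H$-torsor $Q$. Applying this with $Q=P_{\mathbf d}$, $H=G_{\mathbf d}$, $G=\PGL_n$, $Y=\mathbb P^{n-1}$, together with the defining identification $P\cong P_{\mathbf d}\times^{G_{\mathbf d}}\PGL_n$ of the subsystem structure (Definition~\ref{def:subsystem-structure}) and the isomorphism $SB(\mathcal A)\cong P\times^{\PGL_n}\mathbb P^{n-1}$, yields a canonical isomorphism $SB(\mathcal A)\cong P_{\mathbf d}\times^{G_{\mathbf d}}\mathbb P^{n-1}$ over $X$.

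Next I would apply the functor $P_{\mathbf d}\times^{G_{\mathbf d}}(-)$ to the equivariant closed immersion $\Sigma_{\mathbf d}\hookrightarrow\mathbb P^{n-1}$. Because $\Sigma_{\mathbf d}$ and $\mathbb P^{n-1}$ are projective, $G_{\mathbf d}$ is smooth in characteristic $0$, and $X$ is quasi-projective, the associated bundles $\Sigma_{\mathbf d}(\mathcal A)=P_{\mathbf d}\times^{G_{\mathbf d}}\Sigma_{\mathbf d}$ and $P_{\mathbf d}\times^{G_{\mathbf d}}\mathbb P^{n-1}$ are representable by $X$-schemes (the quotients exist by fppf descent of quasi-projective morphisms along the cover trivializing $P_{\mathbf d}$), and the functor carries the equivariant closed immersion to a morphism $\Sigma_{\mathbf d}(\mathcal A)\to P_{\mathbf d}\times^{G_{\mathbf d}}\mathbb P^{n-1}\cong SB(\mathcal A)$ over $X$; this is the candidate $\iota_{\mathbf d}$.

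It remains to check that $\iota_{\mathbf d}$ is a closed immersion with the asserted local form, and that it is canonical. Since being a closed immersion is fppf-local on the target, I would fix an fppf cover $\{V\to X\}$ trivializing $P_{\mathbf d}$; over $V$ the construction $P_{\mathbf d}|_V\times^{G_{\mathbf d}}(-)$ is the constant-fibre functor, so $\Sigma_{\mathbf d}(\mathcal A)|_V\cong\Sigma_{\mathbf d}\times V$, $SB(\mathcal A)|_V\cong\mathbb P^{n-1}\times V$ compatibly with the cancellation isomorphism, and $\iota_{\mathbf d}|_V$ is identified with $(\Sigma_{\mathbf d}\hookrightarrow\mathbb P^{n-1})\times\mathrm{id}_V$, a closed immersion; fppf descent then gives that $\iota_{\mathbf d}$ is a closed immersion. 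Naturality over $X$ is automatic, each step being functorial in $P_{\mathbf d}$.

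The only nonformal point -- the main obstacle -- is the representability and the behavior on closed immersions of the associated-bundle construction: that $P_{\mathbf d}\times^{G_{\mathbf d}}\mathbb P^{n-1}$ (and its Segre sub-object) is a scheme and that the functor sends equivariant closed immersions to closed immersions. Both reduce by fppf descent to the trivial local picture, using that closed immersions are stable under base change and fppf-local on the target and that quasi-projective morphisms descend along fppf covers (here guaranteed by the quasi-projectivity of $X$, which makes $SB(\mathcal A)$ and $\Sigma_{\mathbf d}(\mathcal A)$ quasi-projective over $X$). One should also confirm, as a sanity check already contained in the cancellation isomorphism, that the trivialization of $SB(\mathcal A)$ induced from a trivialization of $P_{\mathbf d}$ via $P\cong P_{\mathbf d}\times^{G_{\mathbf d}}\PGL_n$ agrees with the standard one of Definition~\ref{def:SB}; this requires no further input.
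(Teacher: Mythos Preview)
Your proposal is correct and is essentially the paper's argument, just phrased in the functorial associated-bundle language (via the cancellation isomorphism $SB(\mathcal A)\cong P_{\mathbf d}\times^{G_{\mathbf d}}\mathbb P^{n-1}$) rather than in the paper's hands-on formulation of trivializing $P_{\mathbf d}$ over an fppf cover and gluing the local Segre embeddings using that the transition functions land in $G_{\mathbf d}$. The underlying content is identical: $G_{\mathbf d}$-equivariance of $\Sigma_{\mathbf d}\hookrightarrow\mathbb P^{n-1}$ plus fppf descent of closed immersions.
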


\begin{proof}
Choose an fppf cover $U\to X$ trivializing the $G_{\mathbf d}$--torsor $P_{\mathbf d}$. After choosing a trivialization of $P_{\mathbf d}|_U$, we obtain isomorphisms
\[
  SB(\mathcal A)|_U \cong \mathbb P^{n-1}\times U,
  \qquad
  \Sigma_{\mathbf d}(\mathcal A)|_U\cong \Sigma_{\mathbf d}\times U,
\]
and hence the classical Segre embedding gives a closed immersion on $U$.

On overlaps $U\times_X U$, the two trivializations differ by a transition function with values in $G_{\mathbf d}$, and by definition $G_{\mathbf d}$ preserves the Segre subscheme $\Sigma_{\mathbf d}\subset \mathbb P^{n-1}$. Therefore the local Segre maps are compatible on overlaps and glue to a global closed immersion $\Sigma_{\mathbf d}(\mathcal A)\hookrightarrow SB(\mathcal A)$.
\end{proof}

\begin{corollary}[Subsystem structures $\Longleftrightarrow$ relative Segre subschemes]
\label{cor:subsystem-structure-iff-segre}
Let $\mathcal A$ be an Azumaya algebra of degree $n$ on $X$, with Severi--Brauer scheme $\pi:SB(\mathcal A)\to X$. Fix a factorization type $\mathbf d$ and the Segre subscheme $\Sigma_{\mathbf d}\subset \PP^{n-1}$.

A $\mathbf d$--subsystem structure on $\mathcal A$ exists if and only if there exists a closed subscheme
\[
  \Sigma\ \hookrightarrow\ SB(\mathcal A)
\]
flat over $X$ such that fppf-locally on $X$ the pair $(SB(\mathcal A),\Sigma)$ is isomorphic to $(\PP^{n-1}\times X,\ \Sigma_{\mathbf d}\times X)$, with $\Sigma$ identified with the standard Segre subscheme.

Moreover, when such $\Sigma$ exists it determines (up to unique isomorphism) a unique $G_{\mathbf d}$--reduction $P_{\mathbf d}\to X$ of the associated $\PGL_n$--torsor $P\to X$, and the induced subscheme $\Sigma_{\mathbf d}(\mathcal A)=P_{\mathbf d}\times^{G_{\mathbf d}}\Sigma_{\mathbf d}$ of Definition~\ref{def:SSB} identifies canonically with $\Sigma$.
\end{corollary}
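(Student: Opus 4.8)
The plan is to prove both directions by combining Proposition~\ref{prop:segre-descends} with a descent argument that recovers the $G_{\mathbf d}$-torsor from a relative Segre subscheme. The forward implication is essentially Proposition~\ref{prop:segre-descends}: given a $\mathbf d$-subsystem structure $P_{\mathbf d}\to X$, set $\Sigma := \Sigma_{\mathbf d}(\mathcal A) = P_{\mathbf d}\times^{G_{\mathbf d}}\Sigma_{\mathbf d}$, which is fppf-locally $\Sigma_{\mathbf d}\times U$ hence flat over $X$ (flatness is fppf-local on the base), and apply $\iota_{\mathbf d}$ to embed it as a closed subscheme of $SB(\mathcal A)$ with the required local model. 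So the content is in the converse and in the uniqueness/canonicity claims.

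For the converse, suppose $\Sigma\hookrightarrow SB(\mathcal A)$ is closed, flat over $X$, and fppf-locally modeled on $(\PP^{n-1}\times X,\Sigma_{\mathbf d}\times X)$. First I would choose an fppf cover $\{U_i\to X\}$ on which both $SB(\mathcal A)$ trivializes and the pair is isomorphic to the standard model; fixing such isomorphisms gives transition functions $g_{ij}\in\PGL_n(U_{ij})$ for $SB(\mathcal A)$. The key step is to show these transition functions can be taken to land in $G_{\mathbf d}$: on $U_{ij}$ the two local trivializations identify $\Sigma|_{U_{ij}}$ with $\Sigma_{\mathbf d}\times U_{ij}$ in two ways, and the comparison automorphism $g_{ij}$ of $\PP^{n-1}\times U_{ij}$ must carry one copy of the Segre subscheme to the other, i.e.\ $g_{ij}(\Sigma_{\mathbf d}) = \Sigma_{\mathbf d}$ as subschemes of $\PP^{n-1}\times U_{ij}$. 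By the scheme-theoretic definition of $G_{\mathbf d}$ (Definition~\ref{def:segrestab}) this says exactly $g_{ij}\in G_{\mathbf d}(U_{ij})$. The cocycle $(g_{ij})$ then defines a $G_{\mathbf d}$-torsor $P_{\mathbf d}\to X$, and by construction $P_{\mathbf d}\times^{G_{\mathbf d}}\PGL_n\cong P$, giving the reduction; descent along the fppf cover assembles the local data into a genuine torsor. That $\Sigma_{\mathbf d}(\mathcal A) = P_{\mathbf d}\times^{G_{\mathbf d}}\Sigma_{\mathbf d}$ is canonically identified with $\Sigma$ is then immediate, since both are glued from $\Sigma_{\mathbf d}\times U_i$ along the same cocycle.

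For uniqueness of the reduction, I would argue that two $G_{\mathbf d}$-reductions $P_{\mathbf d},P_{\mathbf d}'$ inducing the \emph{same} subscheme $\Sigma$ differ by an automorphism of $P$ as a $\PGL_n$-torsor that is compatible with the reductions; but refining to a common trivializing cover, the ambiguity is measured by elements of $\PGL_n(U_i)$ fixing $\Sigma_{\mathbf d}\times U_i$, hence lying in $G_{\mathbf d}(U_i)$, which shows the two reductions are isomorphic, and the isomorphism is unique because $G_{\mathbf d}$ acts freely in the relevant torsor sense. The main obstacle I anticipate is the scheme-theoretic bookkeeping in the key step: one must check that $g_{ij}(\Sigma_{\mathbf d})=\Sigma_{\mathbf d}$ holds as an equality of closed \emph{subschemes} (not just of reduced points) over the possibly non-reduced base $U_{ij}$, using that $\Sigma$ is flat over $X$ so that its formation commutes with the base changes involved and the two local identifications with $\Sigma_{\mathbf d}\times U_{ij}$ are identifications of flat families; this is what legitimately places $g_{ij}$ in $G_{\mathbf d}$ in the scheme-theoretic sense of Definition~\ref{def:segrestab} rather than merely in its reduction.
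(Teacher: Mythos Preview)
Your proposal is correct and follows essentially the same approach as the paper: the forward direction via Proposition~\ref{prop:segre-descends}, and the converse by choosing a trivializing fppf cover and observing that the transition functions of $SB(\mathcal A)$ must preserve $\Sigma_{\mathbf d}$, hence land in $G_{\mathbf d}$, yielding the reduction. Your treatment is in fact more careful than the paper's own proof, which does not spell out the uniqueness argument or the scheme-theoretic point about $g_{ij}(\Sigma_{\mathbf d})=\Sigma_{\mathbf d}$ holding over a general base; these are worthwhile clarifications.
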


\begin{proof}
The forward implication is Proposition~\ref{prop:segre-descends}.

Conversely, choose an fppf cover $U\to X$ trivializing $P$, so that $SB(\mathcal A)|_U\simeq \PP^{n-1}\times U$. By assumption, after refining $U$ we may identify $\Sigma|_U$ with $\Sigma_{\mathbf d}\times U$. On overlaps $U\times_X U$, the transition functions of $SB(\mathcal A)$ are $\PGL_n$--valued maps
$g_{ij}$, and the fact that the Segre subschemes glue forces $g_{ij}$ to preserve $\Sigma_{\mathbf d}$. Hence $g_{ij}$ takes values in $G_{\mathbf d}$, giving a $G_{\mathbf d}$--valued cocycle and therefore a $G_{\mathbf d}$--reduction $P_{\mathbf d}\to X$. By construction, the associated subscheme
$P_{\mathbf d}\times^{G_{\mathbf d}}\Sigma_{\mathbf d}$ recovers $\Sigma$.
\end{proof}

\subsection{Subsystem-structure loci}
\label{subsec:segre-hilbert-locus}
\subsubsection{Definitions}
Fix an Azumaya algebra $\mathcal A$ of degree $n$ on $X$, and let $P\to X$ be the associated $\PGL_n$--torsor so that $SB(\mathcal A)\cong P\times^{\PGL_n}\PP^{n-1}$. Fix a subsystem (factorization) type $\mathbf d=(d_1,\dots,d_s)$ with $n=\prod_i d_i$ and the stabilizer $G_{\mathbf d}\subset \PGL_n$ (Definition~\ref{def:segrestab}).

Since $X$ is noetherian and $\pi:SB(\mathcal A)\to X$ is projective, the full Hilbert functor
\begin{align}
\begin{aligned}
  \Hilb\bigl(SB(\mathcal A)/X\bigr)&:(\mathbf{Sch}/X)^{\mathrm{op}}\to \mathbf{Sets},\\\notag
  (T\xrightarrow{f}X)&\longmapsto
  \left\{
    \Sigma_T\subset SB(\mathcal A)\times_X T
    \;\middle|\;
    \Sigma_T\text{ is a closed subscheme flat over }T
  \right\}
\end{aligned}
\end{align}
is representable by a scheme, denoted by $\Hilb\bigl(SB(\mathcal A)/X\bigr)$.

Fix a relatively ample invertible sheaf $\mathcal L$ on $SB(\mathcal A)$. For each Hilbert polynomial $\mathsf P$ (with respect to $\mathcal L$), let $\Hilb^{\mathsf P}_{\mathcal L}\!\bigl(SB(\mathcal A)/X\bigr)$ denote the corresponding (projective) Hilbert scheme. Then the full Hilbert scheme decomposes as a disjoint union of open-and-closed subschemes
\[
  \Hilb\bigl(SB(\mathcal A)/X\bigr)
  \;=\;
  \coprod_{\mathsf P}\Hilb^{\mathsf P}_{\mathcal L}\bigl(SB(\mathcal A)/X\bigr).
\]
We will only use the existence of the full Hilbert scheme and the fact that each
$\Hilb^{\mathsf P}_{\mathcal L}$ is projective over $X$.

\begin{definition}[Reduction functor (subsystem structures in families)]
\label{def:reduction-functor}
Let $\mathbf{Sch}/X$ be the category of schemes over $X$. Define a functor
\[
  \mathfrak{Red}_{\mathbf d}(\mathcal A):(\mathbf{Sch}/X)^{\mathrm{op}}\to \mathbf{Sets}
\]
by sending $f:T\to X$ to the set of \emph{$G_{\mathbf d}$--reductions of $P_T:=P\times_X T$}, namely pairs
\[
  (P_{\mathbf d,T},\ \alpha_T)
\]
where $P_{\mathbf d,T}\to T$ is a principal $G_{\mathbf d}$--torsor and $\alpha_T:P_{\mathbf d,T}\times^{G_{\mathbf d}} \PGL_n \xrightarrow{\sim} P_T$ is an isomorphism of $\PGL_n$--torsors, modulo the usual notion of isomorphism of reductions.
\end{definition}

\begin{definition}
\label{def:segre-hilbert-functor}
Define a subfunctor
\[
  \mathfrak{Hilb}^{\Sigma_{\mathbf d}}_{\mathcal A}\ \subset\
  \Hilb\bigl(SB(\mathcal A)/X\bigr)
\]
as follows. For $f:T\to X$, $\mathfrak{Hilb}^{\Sigma_{\mathbf d}}_{\mathcal A}(T)$ consists of closed subschemes
\[
  \Sigma_T\ \subset\ SB(\mathcal A)\times_X T
\]
flat over $T$, such that there exists an fppf cover $\{T_i\to T\}$ with:
\begin{enumerate}[label=(\roman*)]
\item $SB(\mathcal A)\times_X T_i\ \cong\ \PP^{n-1}\times T_i$ over $T_i$
      ($\mathcal A_{T_i}\cong M_n(\mathcal O_{T_i})$, so the induced $\PGL_n$--torsor on $T_i$ is trivial), and
\item under such an identification, $\Sigma_T|_{T_i}$ is identified with $\Sigma_{\mathbf d}\times T_i\subset \PP^{n-1}\times T_i$.
\end{enumerate}
\end{definition}

\subsubsection{The Segre orbit inside the full Hilbert scheme of $\PP^{n-1}$}
Let $\Hilb(\PP^{n-1})$ denote the full Hilbert scheme of $\PP^{n-1}$ (the disjoint union of $\Hilb^{\mathsf P}(\PP^{n-1})$ over all Hilbert polynomials ${\mathsf P}$ with respect to $\mathcal O_{\PP^{n-1}}(1)$). The natural action of $\PGL_n$ on $\PP^{n-1}$ induces an action on $\Hilb(\PP^{n-1})$.

Let $[\Sigma_{\mathbf d}]\in \Hilb(\PP^{n-1})(k)$ be the point corresponding to the Segre subscheme, and let $\mathscr O_{\Sigma_{\mathbf d}}\subset \Hilb(\PP^{n-1})$ denote its $\PGL_n$--orbit. Note that $\mathscr O_{\Sigma_{\mathbf d}}$ is automatically contained in the open-and-closed component of $\Hilb(\PP^{n-1})$ containing $[\Sigma_{\mathbf d}]$.

\begin{lemma}[Segre orbit as a homogeneous space]
\label{lem:segre-orbit-homogeneous}
Let $\mathscr O_{\Sigma_{\mathbf d}} \subset \Hilb(\PP^{n-1})$ be the $\PGL_n$--orbit of $[\Sigma_{\mathbf d}]$.
Then:
\begin{enumerate}[label=(\roman*)]
\item The stabilizer of $[\Sigma_{\mathbf d}]$ in $\PGL_n$ is precisely $G_{\mathbf d}$.
\item The orbit morphism
\[
  \phi:\ \PGL_n \longrightarrow \Hilb(\PP^{n-1}),\qquad g\longmapsto g\cdot[\Sigma_{\mathbf d}]
\]
factors through the quotient to give a morphism
\[
  \bar\phi:\ \PGL_n/G_{\mathbf d}\ \longrightarrow\ \Hilb(\PP^{n-1})
\]
which is a locally closed immersion with image $\mathscr O_{\Sigma_{\mathbf d}}$.
In particular, $\mathscr O_{\Sigma_{\mathbf d}}$ is a smooth locally closed subscheme and
\[
  \mathscr O_{\Sigma_{\mathbf d}}\ \cong\ \PGL_n/G_{\mathbf d}.
\]
\end{enumerate}
\end{lemma}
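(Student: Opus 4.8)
The plan is to identify $G_{\mathbf d}$ with the scheme-theoretic stabilizer of the Hilbert point $[\Sigma_{\mathbf d}]$ and then invoke the standard orbit--stabilizer theory for actions of smooth algebraic groups, after noting that everything takes place inside a single projective component of $\Hilb(\PP^{n-1})$.

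For (i) I would argue functorially. The $\PGL_n$-action on $\Hilb(\PP^{n-1})$ sends a $T$-point $g\in\PGL_n(T)$ together with a $T$-flat family $\mathcal Z\subset\PP^{n-1}_T$ to $\sigma_g(\mathcal Z)$, where $\sigma_g$ is the $T$-automorphism of $\PP^{n-1}_T$ induced by $g$. The $T$-point of $\Hilb(\PP^{n-1})$ underlying $[\Sigma_{\mathbf d}]$ is the base change $\Sigma_{\mathbf d}\times_k T$, so $g$ fixes $[\Sigma_{\mathbf d}]$ exactly when $\sigma_g(\Sigma_{\mathbf d}\times_k T)=\Sigma_{\mathbf d}\times_k T$ as closed subschemes of $\PP^{n-1}_T$. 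This is precisely the functor of points of $G_{\mathbf d}$ from Definition~\ref{def:segrestab}. Since each component $\Hilb^{\mathsf P}(\PP^{n-1})$ is separated (indeed projective) over $k$, the stabilizer of a $k$-point is a closed subgroup scheme of $\PGL_n$; hence $\mathrm{Stab}_{\PGL_n}([\Sigma_{\mathbf d}])=G_{\mathbf d}$ as group schemes (which also re-confirms that $G_{\mathbf d}$ is representable, as tacitly used in Definition~\ref{def:segrestab}).

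For (ii), by Chevalley's theorem on quotients the fppf quotient $\PGL_n/G_{\mathbf d}$ is representable by a quasi-projective $k$-scheme, and since $\mathrm{char}(k)=0$ and $G_{\mathbf d}$ is smooth (the Westwick description recalled above), the quotient $\PGL_n/G_{\mathbf d}$ is smooth and $\PGL_n\to\PGL_n/G_{\mathbf d}$ is a smooth surjective $G_{\mathbf d}$-torsor. By (i) the orbit morphism $\phi(g)=g\cdot[\Sigma_{\mathbf d}]$ is right $G_{\mathbf d}$-invariant, so fppf descent along $\PGL_n\to\PGL_n/G_{\mathbf d}$ produces the factorization $\bar\phi:\PGL_n/G_{\mathbf d}\to\Hilb(\PP^{n-1})$, a monomorphism (its geometric fibers are single $G_{\mathbf d}$-cosets). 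As already observed, the image $\mathscr O_{\Sigma_{\mathbf d}}$ lies in the single open-and-closed, projective component of $\Hilb(\PP^{n-1})$ containing $[\Sigma_{\mathbf d}]$, so the general theory of orbits for an algebraic group acting on a finite-type $k$-scheme applies: the orbit is locally closed, and the induced map from $\PGL_n/\mathrm{Stab}$ to the orbit is an isomorphism of schemes (as in the orbit--stabilizer results in Milne's \emph{Algebraic Groups}, or SGA3). Therefore $\bar\phi$ is a locally closed immersion onto $\mathscr O_{\Sigma_{\mathbf d}}$, giving $\mathscr O_{\Sigma_{\mathbf d}}\cong\PGL_n/G_{\mathbf d}$ and, in particular, smoothness of $\mathscr O_{\Sigma_{\mathbf d}}$.

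The point requiring the most care is the interface in (i) between the abstract Hilbert-scheme point $[\Sigma_{\mathbf d}]$ and the subscheme $\Sigma_{\mathbf d}\subset\PP^{n-1}$: one must check that ``fixing the point of $\Hilb$'' and ``preserving the subscheme'' define the same functor even on non-reduced test schemes $T$, so that the \emph{scheme-theoretic} stabilizers coincide, not merely the sets of $k$-points; properness of $\PP^{n-1}$ is what guarantees the relevant transporter is representable and closed. The remaining inputs --- existence of $\PGL_n/G_{\mathbf d}$ and the fact that orbit maps of smooth groups are locally closed immersions onto orbits --- are standard once attention is restricted to the single projective component meeting the orbit, and in characteristic $0$ there are no separability or non-reducedness pathologies, so the orbit is automatically reduced and $\PGL_n/G_{\mathbf d}$ automatically smooth.
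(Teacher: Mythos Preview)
Your proposal is correct and follows essentially the same approach as the paper: identify the scheme-theoretic stabilizer in (i), then in (ii) factor the orbit map through $\PGL_n/G_{\mathbf d}$ and argue it is a locally closed immersion. The only difference is expository: you are more explicit in (i) about the functor-of-points verification, while in (ii) you invoke the orbit--stabilizer package from Milne/SGA3 as a black box; the paper instead writes out the monomorphism check for $\bar\phi$ directly (lifting $T$-points along an fppf cover and showing $g_2^{-1}g_1\in G_{\mathbf d}(T')$) before concluding the immersion property.
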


\begin{proof}
(i) This is immediate from the definition of $G_{\mathbf d}$ as the scheme-theoretic stabilizer of the Segre subscheme $\Sigma_{\mathbf d}\subset \PP^{n-1}$ (Definition~\ref{def:segrestab}).

(ii) Since $G_{\mathbf d}$ stabilizes $[\Sigma_{\mathbf d}]$, the orbit map $\phi$ is constant on left cosets of $G_{\mathbf d}$, hence factors through a unique morphism $\bar\phi:\PGL_n/G_{\mathbf d}\to \Hilb(\PP^{n-1})$.

We claim that $\bar\phi$ is a monomorphism. Let $T$ be a scheme and let $x_1,x_2\in (\PGL_n/G_{\mathbf d})(T)$ with $\bar\phi(x_1)=\bar\phi(x_2)$. After an fppf cover $T'\to T$ we may choose lifts $g_1,g_2\in \PGL_n(T')$ mapping to $x_1|_{T'}$ and $x_2|_{T'}$. The equality $\bar\phi(x_1)=\bar\phi(x_2)$ implies
\[
  g_1(\Sigma_{\mathbf d}\times T')\;=\;g_2(\Sigma_{\mathbf d}\times T') \quad\text{as closed subschemes of }\PP^{n-1}\times T',
\]
hence $g_2^{-1}g_1$ stabilizes $\Sigma_{\mathbf d}\times T'$, hence $g_2^{-1}g_1\in G_{\mathbf d}(T')$ by (i). Therefore $g_1$ and $g_2$ define the same $T'$--point of $\PGL_n/G_{\mathbf d}$, so $x_1|_{T'}=x_2|_{T'}$, and hence $x_1=x_2$. Thus $\bar\phi$ is a monomorphism.

A standard result implies that a monomorphism locally of finite type is an immersion. Hence $\bar\phi$ is an immersion. Its image is exactly the orbit $\mathscr O_{\Sigma_{\mathbf d}}$, so $\mathscr O_{\Sigma_{\mathbf d}}$ is locally closed. Smoothness follows because in characteristic $0$ the stabilizer $G_{\mathbf d}$ is smooth (its identity component is smooth and the component group is finite étale), hence $\PGL_n/G_{\mathbf d}$ is smooth.
\end{proof}

\subsubsection{Twisting to the Severi--Brauer scheme}

\begin{lemma}[Full relative Hilbert scheme as an associated bundle]
\label{lem:hilb-associated-bundle}
There is a canonical isomorphism of $X$--schemes
\[
  \Hilb\bigl(SB(\mathcal A)/X\bigr)
  \ \cong\
  P\times^{\PGL_n} \Hilb(\PP^{n-1}),
\]
where $\PGL_n$ acts on $\Hilb(\PP^{n-1})$ via its action on $\PP^{n-1}$. Under this identification, universal families correspond by descent.
\end{lemma}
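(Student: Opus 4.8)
The plan is to exhibit both sides as representing the same functor on $\mathbf{Sch}/X$, using fppf descent and the fact that $SB(\mathcal A)$ is fppf-locally trivial. First I would recall that $\Hilb(SB(\mathcal A)/X)$ represents the functor $T\mapsto\{\text{closed subschemes }\Sigma_T\subset SB(\mathcal A)\times_X T\text{ flat over }T\}$, which exists since $\pi$ is projective (each component $\Hilb^{\mathsf P}_{\mathcal L}$ being projective over $X$), and that formation of the relative Hilbert scheme commutes with arbitrary base change: for $f:T\to X$ there is a canonical isomorphism $\Hilb(SB(\mathcal A)/X)\times_X T\cong \Hilb\bigl((SB(\mathcal A)\times_X T)/T\bigr)$. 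This base-change compatibility is the technical backbone of everything that follows.

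Next I would construct the comparison map. Pick an fppf cover $\{U_i\to X\}$ trivializing $P$, hence with $SB(\mathcal A)|_{U_i}\cong \PP^{n-1}\times U_i$. Over $U_i$, base change gives $\Hilb(SB(\mathcal A)/X)|_{U_i}\cong \Hilb(\PP^{n-1}\times U_i/U_i)\cong \Hilb(\PP^{n-1})\times U_i$, where the last isomorphism is again base change for the absolute Hilbert scheme of $\PP^{n-1}$. On overlaps $U_{ij}=U_i\times_X U_j$ the two trivializations of $SB(\mathcal A)$ differ by the transition cocycle $g_{ij}\in\PGL_n(U_{ij})$ of $P$; by functoriality of Hilbert schemes, the induced gluing on $\Hilb(\PP^{n-1})\times U_{ij}$ is precisely the action of $g_{ij}$ on $\Hilb(\PP^{n-1})$ induced from its action on $\PP^{n-1}$. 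These are exactly the clutching data defining the associated bundle $P\times^{\PGL_n}\Hilb(\PP^{n-1})$, so the local isomorphisms glue (fppf descent for morphisms of schemes, plus descent of the scheme $P\times^{\PGL_n}\Hilb(\PP^{n-1})$ itself, which exists because $\PGL_n$ acts on the quasi-projective-over-$k$ scheme $\Hilb(\PP^{n-1})$ — or rather component-by-component on each projective $\Hilb^{\mathsf P}$ — with enough structure for the associated bundle to be a scheme over $X$). This yields the desired canonical isomorphism, and the universal family statement follows because the universal closed subscheme on each chart is the pullback of the universal subscheme of $\Hilb(\PP^{n-1})\times\PP^{n-1}$, and these are glued by the same cocycle, so they descend to a universal family on $P\times^{\PGL_n}\Hilb(\PP^{n-1})$ realizing it as the relative Hilbert scheme.

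Alternatively — and perhaps cleaner to write — one can verify the universal property directly: a $T$-point of $P\times^{\PGL_n}\Hilb(\PP^{n-1})$ is, by definition of the associated bundle, a $\PGL_n$-equivariant map $P_T\to \Hilb(\PP^{n-1})$ from the pulled-back torsor, equivalently (by fppf descent down $P_T\to T$) a section over $T$ of the twisted form $P_T\times^{\PGL_n}\Hilb(\PP^{n-1})$; unwinding, this is the datum of a closed subscheme of $SB(\mathcal A)\times_X T$, flat over $T$, that fppf-locally on $T$ becomes a flat family of subschemes of $\PP^{n-1}$ — i.e.\ exactly a $T$-point of $\Hilb(SB(\mathcal A)/X)$. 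The main obstacle, and the step deserving care, is the representability/schematicity of the associated bundle $P\times^{\PGL_n}\Hilb(\PP^{n-1})$: since $\Hilb(\PP^{n-1})$ is an infinite disjoint union $\coprod_{\mathsf P}\Hilb^{\mathsf P}$ of projective $k$-schemes and $\PGL_n$ permutes (in fact fixes, each being $\PGL_n$-stable) the components, one forms the associated bundle componentwise using that $\PGL_n$-torsors are fppf-locally trivial and each $\Hilb^{\mathsf P}$ is quasi-projective over $k$ with a linearizable $\PGL_n$-action, so the quotient $P\times^{\PGL_n}\Hilb^{\mathsf P}$ exists as a scheme over $X$ (indeed projective over $X$); one then takes the disjoint union. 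Checking that the glued object is genuinely a scheme and that the isomorphism is canonical (independent of the chosen cover) is routine descent bookkeeping but should be stated explicitly.
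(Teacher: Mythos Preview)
Your proposal is correct and follows essentially the same approach as the paper's proof: trivialize $P$ on an fppf cover, invoke base change for Hilbert schemes to get $\Hilb(SB(\mathcal A)/X)|_U\cong \Hilb(\PP^{n-1})\times U$, and observe that the overlap gluing is exactly the $\PGL_n$--action on $\Hilb(\PP^{n-1})$, which is the associated-bundle construction. You supply more detail than the paper on the representability of $P\times^{\PGL_n}\Hilb(\PP^{n-1})$ (handling it componentwise on the projective $\Hilb^{\mathsf P}$) and offer an alternative functor-of-points verification, but the core argument is identical.
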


\begin{proof}
Choose an fppf cover $U\to X$ trivializing the $\PGL_n$--torsor $P$, so that
\[
  SB(\mathcal A)|_U \cong \PP^{n-1}\times U.
\]
By base change for Hilbert schemes, for each Hilbert polynomial $\mathsf P$ one has
\[
  \Hilb^{\mathsf P}\!\bigl(SB(\mathcal A)/X\bigr)\times_X U
  \ \cong\
  \Hilb^{\mathsf P}\!\bigl((\PP^{n-1}\times U)/U\bigr)
  \ \cong\
  \Hilb^{\mathsf P}(\PP^{n-1})\times U.
\]
Taking the disjoint union over all $\mathsf P$ yields
\[
  \Hilb\bigl(SB(\mathcal A)/X\bigr)\times_X U
  \ \cong\
  \Hilb(\PP^{n-1})\times U.
\]
On overlaps $U\times_X U$, two trivializations differ by a transition function with values in $\PGL_n$. The induced gluing on the Hilbert factor is precisely the $\PGL_n$--action on $\Hilb(\PP^{n-1})$. Thus the gluing rule on overlaps is governed precisely by the $\PGL_n$--action on $\Hilb(\PP^{n-1})$, which is exactly the associated-bundle construction $P\times^{\PGL_n}\Hilb(\PP^{n-1})$.
\end{proof}

\begin{definition}[The subsystem-structure locus]
\label{def:segre-hilbert-locus}
Define the \emph{subsystem-structure locus} to be the locally closed $X$--subscheme
\[
  \Hilb^{\Sigma_{\mathbf d}}\!\bigl(SB(\mathcal A)/X\bigr)
  \;:=\; P\times^{\PGL_n}\mathscr O_{\Sigma_{\mathbf d}}
  \ \subset\
  P\times^{\PGL_n}\Hilb(\PP^{n-1})
  \ \cong\
  \Hilb\bigl(SB(\mathcal A)/X\bigr),
\]
where $\mathscr O_{\Sigma_{\mathbf d}}\subset \Hilb(\PP^{n-1})$ is the orbit given in Lemma~\ref{lem:segre-orbit-homogeneous}.
\end{definition}

\begin{lemma}[Identification with the quotient $P/G_{\mathbf d}$]
\label{lem:hilb-locus-is-quotient}
There is a canonical isomorphism of $X$--schemes
\[
  \Hilb^{\Sigma_{\mathbf d}}\!\bigl(SB(\mathcal A)/X\bigr)\ \cong\ P/G_{\mathbf d},
\]
and under this identification the inclusion $\Hilb^{\Sigma_{\mathbf d}}(SB(\mathcal A)/X)\hookrightarrow \Hilb(SB(\mathcal A)/X)$
is a locally closed immersion.
\end{lemma}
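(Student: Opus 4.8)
The plan is to combine the two associated-bundle descriptions already established: $\mathscr O_{\Sigma_{\mathbf d}}\cong \PGL_n/G_{\mathbf d}$ from Lemma~\ref{lem:segre-orbit-homogeneous}(ii), and the identification of the full relative Hilbert scheme as $P\times^{\PGL_n}\Hilb(\PP^{n-1})$ from Lemma~\ref{lem:hilb-associated-bundle}. First I would substitute the first into Definition~\ref{def:segre-hilbert-locus}:
\[
  \Hilb^{\Sigma_{\mathbf d}}\!\bigl(SB(\mathcal A)/X\bigr)
  \;=\; P\times^{\PGL_n}\mathscr O_{\Sigma_{\mathbf d}}
  \;\cong\; P\times^{\PGL_n}\bigl(\PGL_n/G_{\mathbf d}\bigr).
\]
Then the core of the argument is the general ``torsor cancellation'' identity: for a $\PGL_n$--torsor $P\to X$ and a closed subgroup $G_{\mathbf d}\subset \PGL_n$, there is a canonical isomorphism $P\times^{\PGL_n}(\PGL_n/G_{\mathbf d})\cong P/G_{\mathbf d}$ over $X$. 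I would prove this by constructing the map in both directions: the map $P/G_{\mathbf d}\to P\times^{\PGL_n}(\PGL_n/G_{\mathbf d})$ sends (the class of) $p$ to the class of $(p, \bar e)$ where $\bar e$ is the base coset, and the inverse sends the class of $(p, \bar g)$ to the class of $p\cdot g$; one checks these are well-defined with respect to the $\PGL_n$-- and $G_{\mathbf d}$--equivalences and mutually inverse. Alternatively, and more cleanly, I would verify the universal property: both sides represent the functor $\mathfrak{Red}_{\mathbf d}(\mathcal A)$ of $G_{\mathbf d}$--reductions of $P$ from Definition~\ref{def:reduction-functor} — a $T$--point of $P/G_{\mathbf d}$ is an fppf-local section of $P$ modulo $G_{\mathbf d}$, which is precisely the datum of a $G_{\mathbf d}$--reduction — so the isomorphism follows from Yoneda.

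For the second assertion, that the inclusion into $\Hilb(SB(\mathcal A)/X)$ is a locally closed immersion, I would argue that being a locally closed immersion is fppf-local on the target (it can be checked after the faithfully flat base change $U\to X$ trivializing $P$). After such base change, Lemma~\ref{lem:hilb-associated-bundle} identifies $\Hilb(SB(\mathcal A)/X)\times_X U\cong \Hilb(\PP^{n-1})\times U$, and under this identification the subsystem-structure locus becomes $\mathscr O_{\Sigma_{\mathbf d}}\times U$, which is a locally closed subscheme of $\Hilb(\PP^{n-1})\times U$ because $\mathscr O_{\Sigma_{\mathbf d}}\hookrightarrow \Hilb(\PP^{n-1})$ is a locally closed immersion by Lemma~\ref{lem:segre-orbit-homogeneous}(ii). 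Descent of the locally-closed-immersion property then gives the claim over $X$.

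The main obstacle I anticipate is not any single deep point but the need to be careful about the meaning of the quotients $\PGL_n/G_{\mathbf d}$ and $P/G_{\mathbf d}$ as fppf quotient sheaves (representable here since $G_{\mathbf d}$ is smooth in characteristic $0$, so the quotients exist as schemes), and about the compatibility of the two associated-bundle constructions — i.e., checking that the isomorphism $P\times^{\PGL_n}(\PGL_n/G_{\mathbf d})\cong P/G_{\mathbf d}$ is genuinely canonical and not merely local, so that it descends. This is most efficiently handled via the functor-of-points / Yoneda route rather than by chasing explicit transition cocycles, since both $P/G_{\mathbf d}$ and the twisted orbit bundle are manifestly functorial in $T$ and both visibly parametrize $G_{\mathbf d}$--reductions of $P_T$. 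Everything else is a routine unwinding of definitions.
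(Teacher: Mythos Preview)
Your proposal is correct and follows essentially the same route as the paper: substitute $\mathscr O_{\Sigma_{\mathbf d}}\cong \PGL_n/G_{\mathbf d}$ into the definition, apply the standard torsor-cancellation isomorphism $P\times^{\PGL_n}(\PGL_n/G_{\mathbf d})\cong P/G_{\mathbf d}$ via the explicit map $[p,gG_{\mathbf d}]\mapsto (p\cdot g)G_{\mathbf d}$, and check local closedness fppf-locally using Lemma~\ref{lem:segre-orbit-homogeneous}(ii). The paper does exactly this (without invoking the Yoneda alternative you mention), so there is nothing to add.
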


\begin{proof}
\noindent
Here $P/G_{\mathbf d}$ denotes the fppf quotient sheaf, which is represented by an $X$--scheme since $G_{\mathbf d}\subset \PGL_n$ is a smooth affine subgroup. So, $P/G_{\mathbf d}\cong P\times^{\PGL_n}(\PGL_n/G_{\mathbf d})$.

By Lemma~\ref{lem:segre-orbit-homogeneous}, $\mathscr O_{\Sigma_{\mathbf d}}\cong \PGL_n/G_{\mathbf d}$ as $\PGL_n$--schemes. Hence
\[
  P\times^{\PGL_n}\mathscr O_{\Sigma_{\mathbf d}}
  \ \cong\
  P\times^{\PGL_n}(\PGL_n/G_{\mathbf d}).
\]
There is a standard isomorphism
\[
  P\times^{\PGL_n}(\PGL_n/G_{\mathbf d})\ \xrightarrow{\sim}\ P/G_{\mathbf d},
  \qquad [p,\ gG_{\mathbf d}] \longmapsto (p\cdot g)\,G_{\mathbf d},
\]
which is well-defined and functorial. Local closedness follows fppf-locally on $X$: over a trivializing cover $U\to X$, the inclusion becomes
\[
  (\PGL_n/G_{\mathbf d})\times U\ \hookrightarrow\ \Hilb(\PP^{n-1})\times U,
\]
which is locally closed by Lemma~\ref{lem:segre-orbit-homogeneous}, and these local immersions glue.
\end{proof}

\begin{proposition}[Geometric structure of the Hilbert scheme locus]
\label{prop:hilb-locus-geometry}
Assume $\mathrm{char}(k)=0$, so that $G_{\mathbf d}$ is smooth. Then the subsystem-structure locus
\[
  \Hilb^{\Sigma_{\mathbf d}}\!\bigl(SB(\mathcal A)/X\bigr)\ \cong\ P/G_{\mathbf d}
\]
is a smooth $\PGL_n/G_{\mathbf d}$-fibration. In particular, the structure morphism
\[
  P/G_{\mathbf d}\ \longrightarrow\ X
\]
is smooth of relative dimension $\dim(\PGL_n/G_{\mathbf d})$.

Moreover, since $G_{\mathbf d}^\circ$ is (up to finite kernel) the image of $\prod_i \PGL_{d_i}$, one has
\[
  \dim(\PGL_n/G_{\mathbf d})
  \;=\; n^2-\sum_{i=1}^s d_i^2+(s-1).
\]
\end{proposition}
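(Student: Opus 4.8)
The plan is to deduce the proposition from the identification $\Hilb^{\Sigma_{\mathbf d}}(SB(\mathcal A)/X)\cong P/G_{\mathbf d}\cong P\times^{\PGL_n}(\PGL_n/G_{\mathbf d})$ established in Lemma~\ref{lem:hilb-locus-is-quotient}, combined with the smoothness of $G_{\mathbf d}$ in characteristic zero and a direct dimension count for the homogeneous space $\PGL_n/G_{\mathbf d}$.

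First I would note that, since $\mathrm{char}(k)=0$, the stabilizer $G_{\mathbf d}\subset\PGL_n$ is a smooth closed subgroup (Definition~\ref{def:segrestab} and the structure remark following it), so the fppf quotient $\PGL_n/G_{\mathbf d}$ is represented by a smooth quasi-projective $k$-variety of dimension $\dim\PGL_n-\dim G_{\mathbf d}$; this is the standard existence-and-smoothness statement for quotients of affine algebraic groups by smooth closed subgroups. Because $\PGL_n$ is smooth, the $\PGL_n$-torsor $P\to X$ is trivial over some étale cover $U\to X$, and then $\bigl(P\times^{\PGL_n}(\PGL_n/G_{\mathbf d})\bigr)|_U\cong(\PGL_n/G_{\mathbf d})\times U$ over $U$. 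Thus $P/G_{\mathbf d}\to X$ is, étale-locally on $X$, the projection of a trivial $\PGL_n/G_{\mathbf d}$-bundle, hence a $\PGL_n/G_{\mathbf d}$-fibration; and since smoothness is local on the base for the étale topology while $(\PGL_n/G_{\mathbf d})\times U\to U$ is the base change of the smooth $k$-variety $\PGL_n/G_{\mathbf d}$, the structure morphism $P/G_{\mathbf d}\to X$ is smooth of constant relative dimension $\dim(\PGL_n/G_{\mathbf d})$.

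It then remains to evaluate $\dim(\PGL_n/G_{\mathbf d})=(n^2-1)-\dim G_{\mathbf d}$. The component group of $G_{\mathbf d}$ is finite — it permutes the equal-dimensional Segre factors — so $\dim G_{\mathbf d}=\dim G_{\mathbf d}^\circ$, and by the cited structure of the Segre stabilizer (Westwick~\cite{westwick1967transformations}; cf.~\cite[Prop.~2.2]{2024arXiv240716767G}) the identity component $G_{\mathbf d}^\circ$ is the image of the tensor-product homomorphism $\prod_i\GL_{d_i}\to\GL_n\to\PGL_n$. Chasing scalar kernels identifies this image with $\prod_i\PGL_{d_i}$, whence $\dim G_{\mathbf d}=\sum_i(d_i^2-1)=\sum_i d_i^2-s$, and substituting gives $\dim(\PGL_n/G_{\mathbf d})=(n^2-1)-\bigl(\sum_i d_i^2-s\bigr)=n^2-\sum_i d_i^2+(s-1)$.

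The only point requiring any care is the bookkeeping in the last step: one must track that the kernel of $\prod_i\GL_{d_i}\to\PGL_n$ is the $s$-dimensional torus of scalar tuples $(\mu_1 I_{d_1},\dots,\mu_s I_{d_s})$, rather than the $(s-1)$-dimensional subtorus $\{\prod_i\mu_i=1\}$ relevant for the map into $\GL_n$ — but this is exactly what the structure result for $G_{\mathbf d}$ quoted in the paper encodes, so no new argument is needed; the representability and smoothness inputs are entirely standard.
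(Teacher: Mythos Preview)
Your proof is correct and follows essentially the same approach as the paper: trivialize the $\PGL_n$-torsor étale-locally to identify $P/G_{\mathbf d}$ with $(\PGL_n/G_{\mathbf d})\times U$, use smoothness of $G_{\mathbf d}$ in characteristic~$0$ to deduce smoothness of the homogeneous fiber, and descend. Your dimension count is the same as the paper's (you simply spell out the kernel bookkeeping a bit more explicitly), so there is nothing to add.
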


\begin{proof}
Since $\mathcal A$ is an Azumaya algebra, there exists an \'etale cover $U\to X$ such that $\mathcal A|_U\simeq M_n(\mathcal O_U)$. After such a base change, the associated $\PGL_n$--torsor $P\to X$ becomes trivial and one has $P/G_{\mathbf d}|_U\simeq (\PGL_n/G_{\mathbf d})\times U$.

In characteristic $0$ the stabilizer $G_{\mathbf d}$ is smooth, hence the homogeneous space $\PGL_n/G_{\mathbf d}$ is smooth (cf.\ Lemma~\ref{lem:segre-orbit-homogeneous}). Therefore the projection
\[
  (\PGL_n/G_{\mathbf d})\times U \longrightarrow U
\]
is smooth of relative dimension $\dim(\PGL_n/G_{\mathbf d})$, and smoothness (and relative dimension) descends along \'etale covers. This proves that $P/G_{\mathbf d}\to X$ is smooth with the stated relative dimension.

Finally, since $G_{\mathbf d}^\circ$ is (up to finite kernel) the image of $\prod_i \PGL_{d_i}$, we have
\[
  \dim(\PGL_n/G_{\mathbf d})
  = \dim(\PGL_n)-\dim(G_{\mathbf d})
  = (n^2-1)-\sum_{i=1}^s(d_i^2-1)
  = n^2-\sum_{i=1}^s d_i^2+(s-1),
\]
and the finite component group does not affect the dimension.
\end{proof}

\subsubsection{Equivalence of moduli functors and representability}

\begin{theorem}[Subsystem structures as a Hilbert-scheme locus]
\label{thm:subsystem-moduli-hilbert}
Let $\mathcal A$ be an Azumaya algebra of degree $n$ on $X$ with associated $\PGL_n$--torsor $P\to X$, and fix $\mathbf d$ with stabilizer $G_{\mathbf d}\subset \PGL_n$.

\begin{enumerate}[label=\textup{(\roman*)}]
\item The functor $\mathfrak{Red}_{\mathbf d}(\mathcal A)$ of $G_{\mathbf d}$--reductions of $P$ is represented by the quotient $X$--scheme $P/G_{\mathbf d}$.

\item The subfunctor $\mathfrak{Hilb}^{\Sigma_{\mathbf d}}_{\mathcal A}\subset \Hilb(SB(\mathcal A)/X)$ of relative closed subschemes that are fppf-locally isomorphic to $\Sigma_{\mathbf d}\times X$ is represented by the same scheme.

\item Under these identifications, a global $\mathbf d$--subsystem structure on $\mathcal A$ is equivalently a section
\[
X\ \longrightarrow\ P/G_{\mathbf d}\ \cong\ \Hilb^{\Sigma_{\mathbf d}}\!\bigl(SB(\mathcal A)/X\bigr).
\]
\end{enumerate}
\end{theorem}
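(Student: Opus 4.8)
The plan is to prove (i) by the classical torsor dictionary, deduce (ii) by matching functors through the geometric identifications already established, and then read off (iii) as a formal consequence.

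For (i) I would exhibit mutually inverse natural transformations between $\mathfrak{Red}_{\mathbf d}(\mathcal A)$ and $\Hom_X(-,\,P/G_{\mathbf d})$. Given $f:T\to X$ and a reduction $(P_{\mathbf d,T},\alpha_T)$, the tautological $G_{\mathbf d}$--equivariant inclusion $P_{\mathbf d,T}\hookrightarrow P_{\mathbf d,T}\times^{G_{\mathbf d}}\PGL_n\xrightarrow{\ \alpha_T\ }P_T$ descends to an isomorphism from $T=P_{\mathbf d,T}/G_{\mathbf d}$ onto a section of $P_T/G_{\mathbf d}=(P/G_{\mathbf d})_T\to T$, i.e.\ an $X$--morphism $T\to P/G_{\mathbf d}$. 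Conversely, a section $s$ of the $G_{\mathbf d}$--torsor $P_T\to P_T/G_{\mathbf d}$ pulls that torsor back along $s$ to a $G_{\mathbf d}$--torsor $P_{\mathbf d,T}\to T$ equipped with a canonical $G_{\mathbf d}$--equivariant map to $P_T$ inducing $\alpha_T$. Checking on an fppf cover of $T$ trivialising $P$, these constructions are inverse and compatible with pullback in $T$, and the only ambiguity is precisely the isomorphism of reductions already quotiented out; since $G_{\mathbf d}\subset\PGL_n$ is smooth affine (char $0$), $P/G_{\mathbf d}$ is representable, so $\mathfrak{Red}_{\mathbf d}(\mathcal A)$ is represented by $P/G_{\mathbf d}$. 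I would present this succinctly with a reference rather than in full.

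For (ii), by Lemma~\ref{lem:hilb-locus-is-quotient} there is a canonical isomorphism $\Hilb^{\Sigma_{\mathbf d}}(SB(\mathcal A)/X)\cong P/G_{\mathbf d}$ of locally closed $X$--subschemes of $\Hilb(SB(\mathcal A)/X)$, so it suffices to identify the functor of points of $\Hilb^{\Sigma_{\mathbf d}}(SB(\mathcal A)/X)$ with $\mathfrak{Hilb}^{\Sigma_{\mathbf d}}_{\mathcal A}$. A $T$--point of the locus is a flat closed subscheme $\Sigma_T\subset SB(\mathcal A)\times_X T$ whose classifying morphism $T\to\Hilb(SB(\mathcal A)/X)$ factors through $P\times^{\PGL_n}\mathscr O_{\Sigma_{\mathbf d}}$. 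Pulling back along an fppf cover $\{T_i\to T\}$ trivialising $\mathcal A$, the locus becomes $(\PGL_n/G_{\mathbf d})\times T_i\hookrightarrow\Hilb(\PP^{n-1})\times T_i$, whose $T_i$--points are, by Lemma~\ref{lem:segre-orbit-homogeneous} and the orbit parametrisation of $\mathscr O_{\Sigma_{\mathbf d}}$, exactly the flat families in $\PP^{n-1}\times T_i$ that are $\PGL_n$--conjugate to, hence fppf-locally equal to, $\Sigma_{\mathbf d}\times T_i$. Since both ``factors through a locally closed subscheme'' and ``is fppf-locally of the stated form'' are fppf-local conditions on $T$, descending this equivalence along $\{T_i\to T\}$ yields $\Hilb^{\Sigma_{\mathbf d}}(SB(\mathcal A)/X)(T)=\mathfrak{Hilb}^{\Sigma_{\mathbf d}}_{\mathcal A}(T)$ functorially, proving (ii).

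Part (iii) is then formal: by Definition~\ref{def:subsystem-structure} a global $\mathbf d$--subsystem structure on $\mathcal A$ is an element of $\mathfrak{Red}_{\mathbf d}(\mathcal A)(X)$, which by (i) is an $X$--morphism $X\to P/G_{\mathbf d}$, i.e.\ a section; by (ii) and Lemma~\ref{lem:hilb-locus-is-quotient} this is the same as a section of $\Hilb^{\Sigma_{\mathbf d}}(SB(\mathcal A)/X)\to X$, i.e.\ a relative Segre subscheme in the sense of Corollary~\ref{cor:subsystem-structure-iff-segre}. I would close by chasing the universal objects through the identifications to confirm that the section attached to a reduction $P_{\mathbf d}$ has image the descended locus $\Sigma_{\mathbf d}(\mathcal A)=P_{\mathbf d}\times^{G_{\mathbf d}}\Sigma_{\mathbf d}$ of Definition~\ref{def:SSB}, compatibly with Proposition~\ref{prop:segre-descends}. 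The main obstacle is the schematic bookkeeping in (ii): one must use that $\mathscr O_{\Sigma_{\mathbf d}}$ is the locally closed subscheme defined by the monomorphism $\bar\phi$ (not merely a set-theoretic image), and that fppf descent for ``factoring through a subscheme'' promotes the fppf-local Segre condition to a global factorisation through the locus; parts (i) and (iii) are essentially routine once (ii) is in place.
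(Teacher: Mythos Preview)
Your proposal is correct and uses the same key ingredients as the paper (the locally closed orbit $\mathscr O_{\Sigma_{\mathbf d}}\cong\PGL_n/G_{\mathbf d}$, the associated-bundle description of the relative Hilbert scheme, and fppf descent for factoring through a monomorphism), but the logical decomposition is different. You prove (i) first, directly via the classical dictionary ``$G_{\mathbf d}$--reductions of $P_T$ $\leftrightarrow$ sections of $P_T/G_{\mathbf d}\to T$'', without any reference to the Hilbert scheme, and then prove (ii) by matching the functor of points of the locus scheme with $\mathfrak{Hilb}^{\Sigma_{\mathbf d}}_{\mathcal A}$. The paper instead first constructs an explicit natural isomorphism $\mathfrak{Red}_{\mathbf d}(\mathcal A)\simeq \mathfrak{Hilb}^{\Sigma_{\mathbf d}}_{\mathcal A}$ of functors (by gluing Segre families from reductions and reading off $G_{\mathbf d}$-valued cocycles from Segre families), and only afterwards shows that the locus scheme represents $\mathfrak{Hilb}^{\Sigma_{\mathbf d}}_{\mathcal A}$; (i) is then deduced from (ii) together with Lemma~\ref{lem:hilb-locus-is-quotient}. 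Your route is a bit more modular and keeps (i) as the standard torsor fact it is; the paper's route has the advantage of making the correspondence between a reduction and its descended Segre family explicit at the functorial level (which is exactly what you propose to ``chase'' at the end). The technical heart---that factoring through the locally closed orbit is an fppf-local condition because $\bar\phi$ is a monomorphism---is identified correctly in both.
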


\begin{proof}
We construct mutually inverse natural transformations directly, by gluing in families.

\medskip
\noindent\textbf{(Step 1) From reductions to Segre families.}
Let $f:T\to X$ and let $(P_{\mathbf d,T},\alpha_T)\in \mathfrak{Red}_{\mathbf d}(\mathcal A)(T)$. Form the associated $T$--scheme
\[
  \Sigma_{\mathbf d}(\mathcal A_T)\ :=\ P_{\mathbf d,T}\times^{G_{\mathbf d}} \Sigma_{\mathbf d}.
\]
Using $\alpha_T$, the induced isomorphism of associated $\PP^{n-1}$--bundles
\[
  P_{\mathbf d,T}\times^{G_{\mathbf d}}\PP^{n-1}\ \xrightarrow{\sim}\ P_T\times^{\PGL_n}\PP^{n-1}
  \;=\; SB(\mathcal A_T)
\]
identifies the descended closed immersion $\Sigma_{\mathbf d}(\mathcal A_T)\hookrightarrow P_{\mathbf d,T}\times^{G_{\mathbf d}}\PP^{n-1}$
(with fiberwise the classical Segre embedding) with a closed immersion
\[
  \Sigma_{\mathbf d}(\mathcal A_T)\hookrightarrow SB(\mathcal A_T)=SB(\mathcal A)\times_X T.
\]
Let $\Sigma_T$ denote its image. Then $\Sigma_T$ is flat over $T$ and fppf-locally on $T$ it identifies with $\Sigma_{\mathbf d}\times T$, hence $\Sigma_T\in \mathfrak{Hilb}^{\Sigma_{\mathbf d}}_{\mathcal A}(T)$. This yields a natural transformation
\[
  \Theta:\ \mathfrak{Red}_{\mathbf d}(\mathcal A)\to \mathfrak{Hilb}^{\Sigma_{\mathbf d}}_{\mathcal A}.
\]

\medskip
\noindent\textbf{(Step 2) From Segre families to reductions.}
Conversely, let $\Sigma_T\in \mathfrak{Hilb}^{\Sigma_{\mathbf d}}_{\mathcal A}(T)$. Choose an fppf cover $\{T_i\to T\}$ satisfying Definition~\ref{def:segre-hilbert-functor}. Thus $SB(\mathcal A)\times_X T_i\simeq \PP^{n-1}\times T_i$ and $\Sigma_T|_{T_i}\simeq \Sigma_{\mathbf d}\times T_i$. On overlaps $T_{ij}$, the transition functions for the $\PP^{n-1}$--bundle $SB(\mathcal A)\times_X T\to T$ are maps $g_{ij}:T_{ij}\to \PGL_n$. Since the Segre subschemes match in both trivializations, each $g_{ij}$ preserves $\Sigma_{\mathbf d}$, hence takes values in $G_{\mathbf d}$ by definition of the stabilizer. Therefore $\{g_{ij}\}$ is a $G_{\mathbf d}$--valued cocycle, defining a principal $G_{\mathbf d}$--torsor $P_{\mathbf d,T}\to T$ and an induced identification $P_{\mathbf d,T}\times^{G_{\mathbf d}}\PGL_n \simeq P_T$. This yields a natural transformation
\[
  \Psi:\ \mathfrak{Hilb}^{\Sigma_{\mathbf d}}_{\mathcal A}\to \mathfrak{Red}_{\mathbf d}(\mathcal A).
\]

\medskip
\noindent\textbf{(Step 3) Inverses.}
The constructions are inverse because they are governed by the same $G_{\mathbf d}$--valued descent datum. If one starts with a reduction, the descended Segre family has transition functions in $G_{\mathbf d}$. Conversely, if one starts with a Segre family, the transition functions of $SB(\mathcal A_T)$ are forced to lie in $G_{\mathbf d}$, and this recovers the reduction.

\medskip
\noindent\textbf{(Step 4) Representability.}
Let $f:T\to X$ be an $X$--scheme and let $\Sigma_T\subset SB(\mathcal A)\times_X T$ be the family corresponding to a morphism
$\xi:T\to \Hilb\bigl(SB(\mathcal A)/X\bigr)$.

By Lemma~\ref{lem:hilb-associated-bundle} one has a natural identification
\[
  \Hilb\bigl(SB(\mathcal A)/X\bigr)
  \;\simeq\;
  P\times^{\PGL_n}\Hilb(\PP^{n-1}),
\]
and by Definition~\ref{def:segre-hilbert-locus} the subsystem-structure locus (Hilbert scheme locus) is
\[
  \Hilb^{\Sigma_{\mathbf d}}\bigl(SB(\mathcal A)/X\bigr)
  \;=\;
  P\times^{\PGL_n}\mathscr O_{\Sigma_{\mathbf d}}
  \;\subset\;
  P\times^{\PGL_n}\Hilb(\PP^{n-1}).
\]
Since $\mathscr O_{\Sigma_{\mathbf d}}\hookrightarrow \Hilb(\PP^{n-1})$ is a locally closed immersion, it is a monomorphism. Hence the condition that a morphism factors through $P\times^{\PGL_n}\mathscr O_{\Sigma_{\mathbf d}}$ may be checked fppf--locally on $T$.

Choose an fppf covering $\{T_i\to T\}$ trivializing the pullback torsor $P_T\to T$, so that $SB(\mathcal A)\times_X T_i\simeq \PP^{n-1}\times T_i$. Over each $T_i$ the family $\Sigma_T|_{T_i}$ corresponds to a classifying morphism
$\xi_i:T_i\to \Hilb(\PP^{n-1})$. By construction, $\xi$ factors through $P\times^{\PGL_n}\mathscr O_{\Sigma_{\mathbf d}}$
if and only if each $\xi_i$ factors through $\mathscr O_{\Sigma_{\mathbf d}}\subset \Hilb(\PP^{n-1})$.

On the other hand, by Definition~\ref{def:segre-hilbert-functor}, $\Sigma_T\in \mathfrak{Hilb}^{\Sigma_{\mathbf d}}_{\mathcal A}(T)$ means precisely that after an fppf refinement of $\{T_i\to T\}$ one may choose trivializations $SB(\mathcal A)\times_X T_{ij}\simeq \PP^{n-1}\times T_{ij}$ under which $\Sigma_T|_{T_{ij}}$ identifies with $\Sigma_{\mathbf d}\times T_{ij}$. After such a refinement the corresponding classifying maps $T_{ij}\to \Hilb(\PP^{n-1})$ land in the $\PGL_n$--orbit $\mathscr O_{\Sigma_{\mathbf d}}$.
Thus
\[
  \Sigma_T\in \mathfrak{Hilb}^{\Sigma_{\mathbf d}}_{\mathcal A}(T)
  \quad\Longleftrightarrow\quad
  \xi \text{ factors through }
  \Hilb^{\Sigma_{\mathbf d}}\!\bigl(SB(\mathcal A)/X\bigr),
\]
so $\Hilb^{\Sigma_{\mathbf d}}\!\bigl(SB(\mathcal A)/X\bigr)$ represents the functor $\mathfrak{Hilb}^{\Sigma_{\mathbf d}}_{\mathcal A}$.

Finally, Lemma~\ref{lem:hilb-locus-is-quotient} identifies $\Hilb^{\Sigma_{\mathbf d}}\bigl(SB(\mathcal A)/X\bigr)\cong P/G_{\mathbf d}$. Combining this with Steps~2--3, we conclude that $P/G_{\mathbf d}$ represents both $\mathfrak{Hilb}^{\Sigma_{\mathbf d}}_{\mathcal A}$ and $\mathfrak{Red}_{\mathbf d}(\mathcal A)$. Statement~(iii) is obtained by evaluating the representing property at $T=X$.

\end{proof}

\begin{remark}
\label{rem:compactification}
The identification in Theorem \ref{thm:subsystem-moduli-hilbert}~{(iii)}
\[
  \Hilb^{\Sigma_{\mathbf d}}\bigl(SB(\mathcal A)/X\bigr)\ \cong\ P/G_{\mathbf d}
\]
realizes the moduli of subsystem structures as a canonically defined locally closed subscheme
of the full relative Hilbert scheme. In particular, by
Proposition~\ref{prop:hilb-locus-geometry} it is a smooth $X$--scheme (an \'{e}tale-locally trivial
$\PGL_n/G_{\mathbf d}$--bundle).

Let $\Hilb_{[\Sigma_{\mathbf d}]}(SB(\mathcal A)/X)\subset \Hilb(SB(\mathcal A)/X)$ denote the open-and-closed component
containing the Segre point (the component corresponding to the Hilbert polynomial of $\Sigma_{\mathbf d}$).
This component is projective over $X$. Hence the closure
\[
  \overline{\Hilb^{\Sigma_{\mathbf d}}}\ \subset\ \Hilb_{[\Sigma_{\mathbf d}]}\bigl(SB(\mathcal A)/X\bigr)
\]
provides a canonical projective compactification of $P/G_{\mathbf d}$ over $X$.
Its boundary
$\partial:=\overline{\Hilb^{\Sigma_{\mathbf d}}}\setminus \Hilb^{\Sigma_{\mathbf d}}$
parametrizes flat degenerations of product-state loci inside the twisted family $SB(\mathcal A)\to X$:
fiberwise, points of $\partial$ correspond to limits of Segre fibers in $\PP^{n-1}$.

In the case $\mathbf d=(2,2)$, this boundary becomes explicit. The relevant Hilbert component is $\PP^9$ (Proposition~\ref{prop:Hilb_quadric_P9}),
and the boundary is the discriminant divisor of singular quadrics. Twisting yields the corresponding
relative discriminant divisor (Corollary~\ref{cor:relative_boundary_for_twisted}).
\end{remark}

\subsection{The case $\mathbf d=(2,2)$}\label{subsec:22_boundary}

Let $n=4$ and $\mathbf d=(2,2)$. Then $\Sigma_{2,2}\subset \PP^3$ is a smooth quadric surface. Let $\mathscr H:=\Hilb_{[\Sigma_{2,2}]}(\PP^3)$ be the Hilbert scheme component containing $\Sigma_{2,2}$.

\begin{proposition}\label{prop:Hilb_quadric_P9}
There is a natural isomorphism
\[
\mathscr H \;\simeq\; \PP\big(\mathrm{Sym}^2(k^4)^\vee\big)\;\simeq\; \PP^9,
\]
parameterizing quadric hypersurfaces in $\PP^3$. Under this identification, the $\PGL_4$-orbit of $[\Sigma_{2,2}]$ equals the open subset of smooth quadrics, and its complement is the discriminant hypersurface of singular quadrics. Moreover, the stabilizer of $[\Sigma_{2,2}]$ in $\PGL_4$ is $G_{(2,2)}$ (so the open orbit is $\PGL_4/G_{(2,2)}$).
\end{proposition}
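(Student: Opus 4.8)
The plan is to identify $\mathscr H$ with the classical parameter space of quadric hypersurfaces in $\PP^3$, and then to read off the orbit structure from the linear algebra of quadratic forms. \emph{Step 1 (Hilbert polynomial).} The Segre embedding $\PP^1\times\PP^1\hookrightarrow\PP^3$ has image the smooth quadric $\Sigma_{2,2}=V(z_{00}z_{11}-z_{01}z_{10})$, an effective Cartier divisor with $I_{\Sigma_{2,2}}\cong\cO_{\PP^3}(-2)$. From the exact sequence $0\to\cO_{\PP^3}(m-2)\to\cO_{\PP^3}(m)\to\cO_{\Sigma_{2,2}}(m)\to 0$ one computes $\chi(\cO_{\Sigma_{2,2}}(m))=\binom{m+3}{3}-\binom{m+1}{3}=(m+1)^2$, which is precisely the Hilbert polynomial of a degree-$2$ hypersurface in $\PP^3$. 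Hence $\mathscr H$ is a component of $\Hilb^{(m+1)^2}(\PP^3)$.

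\emph{Step 2 (identification with $\PP^9$).} The crux is the standard fact that the Hilbert scheme of degree-$d$ hypersurfaces in $\PP^N$ is $\PP\bigl(H^0(\cO_{\PP^N}(d))\bigr)$; here $d=2$, $N=3$. Concretely, I would argue that every closed subscheme $Z\subset\PP^3$ with Hilbert polynomial $(m+1)^2$ is an effective Cartier divisor of degree $2$: comparison of leading terms forces $\dim Z=2$ of geometric degree $2$, and comparison of the full Hilbert polynomial with that of the two-dimensional part rules out embedded points and lower-dimensional components, so $Z$ is pure of codimension one; local factoriality of $\PP^3$ then makes $Z=V(q)$ for some $0\neq q\in\mathrm{Sym}^2(k^4)^\vee$, unique up to scalar. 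To upgrade this bijection to an isomorphism of schemes I would run cohomology and base change: for a flat family $\mathcal Z\subset\PP^3\times T$ with these fibrewise Hilbert polynomials, each $I_{\mathcal Z_t}(2)\cong\cO_{\PP^3}$ has $h^0=1$ and vanishing higher cohomology, so $\pi_*\,I_{\mathcal Z/T}(2)$ is a line bundle $L$ on $T$; the fibrewise-injective inclusion $L\hookrightarrow\pi_*\cO(2)=\mathrm{Sym}^2(k^4)^\vee\otimes\cO_T$ classifies a morphism $T\to\PP(\mathrm{Sym}^2(k^4)^\vee)$, and the universal quadric over $\PP^9$ supplies the inverse. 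Thus $\Hilb^{(m+1)^2}(\PP^3)\cong\PP(\mathrm{Sym}^2(k^4)^\vee)\cong\PP^9$, which is irreducible, so $\mathscr H$ is this entire projective space.

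\emph{Step 3 (orbit, discriminant, stabilizer).} Under $Z=V(q)\leftrightarrow[q]$ the $\PGL_4$-action on $\mathscr H$ becomes the action on $\PP(\mathrm{Sym}^2(k^4)^\vee)$ induced by the natural $\GL_4$-action on $k^4$. Over the algebraically closed field $k$ of characteristic $0$, two quadratic forms on $k^4$ are $\GL_4$-equivalent iff they have the same rank, and all nondegenerate (rank-$4$) forms form a single orbit; since $z_{00}z_{11}-z_{01}z_{10}$ has rank $4$, the orbit of $[\Sigma_{2,2}]$ is exactly the open locus of smooth quadrics, whose complement is the vanishing of the determinant of the $4\times 4$ symmetric coefficient matrix, i.e. the irreducible discriminant hypersurface of degree $4$ in $\PP^9$. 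Finally, the stabilizer of the point $[\Sigma_{2,2}]\in\mathscr H$ is by construction $\{g\in\PGL_4:g(\Sigma_{2,2})=\Sigma_{2,2}\}=G_{(2,2)}$ (Definition~\ref{def:segrestab}, Lemma~\ref{lem:segre-orbit-homogeneous}(i)); concretely it is the projective orthogonal group $\mathrm{PO}_4$ of $z_{00}z_{11}-z_{01}z_{10}$, and the isogeny $\SL_2\times\SL_2\to\mathrm{SO}_4$ identifies it with $(\PGL_2\times\PGL_2)\rtimes\Z/2$, matching the description of $G_{(2,2)}$ in the Remark following Definition~\ref{def:segrestab}. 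So the open orbit is $\PGL_4/G_{(2,2)}$, of dimension $15-6=9$, consistently dense in $\PP^9$.

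\emph{Main obstacle.} Everything except Step 2 is routine quadratic-form linear algebra. The one point requiring care — or a precise citation — is Step 2: showing that a subscheme with Hilbert polynomial $(m+1)^2$ really is a genuine quadric hypersurface with no hidden embedded or lower-dimensional components, and that the resulting set-theoretic bijection with $\PP(\mathrm{Sym}^2(k^4)^\vee)$ is an isomorphism of schemes (the cohomology-and-base-change step). Granted this classical input, Steps 1 and 3 follow immediately.
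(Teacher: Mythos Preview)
Your proposal is correct and follows essentially the same route as the paper: identify the Hilbert component with the classical $\PP^9$ of quadric hypersurfaces, then read off the orbit and stabilizer from the linear algebra of symmetric $4\times 4$ matrices. The paper's proof is considerably terser---it simply asserts that the universal family of quadrics over $\PP^9$ is flat and that the induced map $\PP^9\to\Hilb(\PP^3)$ is an isomorphism onto $\mathscr H$---whereas you have correctly identified and spelled out the one nontrivial step (ruling out embedded components and upgrading the bijection via cohomology and base change) that the paper leaves implicit.
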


\begin{proof}
A quadric in $\PP^3$ is defined by a homogeneous polynomial of degree $2$, hence is a point of $\PP(\mathrm{Sym}^2(k^4)^\vee)=\PP^9$. The universal family of quadrics is flat, giving a morphism $\PP^9\to \Hilb(\PP^3)$ whose image is the component $\mathscr H$, and this morphism is an isomorphism. Smoothness is detected by the nonvanishing of the determinant of the associated symmetric matrix,
so the smooth locus is the complement of the discriminant hypersurface. Transitivity of $\PGL_4$ on smooth quadrics identifies this open locus with $\PGL_4/G_{(2,2)}$.
\end{proof}

\begin{corollary}\label{cor:relative_boundary_for_twisted}
Let $P\to X$ be a $\PGL_4$-torsor, and let $SB(\mathcal{A})\to X$ be the associated Severi--Brauer fibration. Then the Hilbert component $\Hilb_{[\Sigma_{2,2}]}(SB(\mathcal{A})/X)$ is the $\PP^9$-bundle
\[
P\times^{\PGL_4}\PP^9 \;\to\; X,
\]
and the subsystem-structure space $P/G_{(2,2)}$ identifies with the open complement of the relative discriminant divisor. In particular, the closure of $P/G_{(2,2)}$ inside this Hilbert component is projective over $X$ and its boundary is a divisor (the relative discriminant).
\end{corollary}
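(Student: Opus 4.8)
The plan is to deduce everything from the associated-bundle description of the relative Hilbert scheme (Lemma~\ref{lem:hilb-associated-bundle}) together with the explicit $\PGL_4$-equivariant geometry of the quadric component from Proposition~\ref{prop:Hilb_quadric_P9}, so that ``twisting by $P$'' merely transports the untwisted picture. First I would identify the Hilbert component: by Lemma~\ref{lem:hilb-associated-bundle} one has $\Hilb(SB(\mathcal A)/X)\cong P\times^{\PGL_4}\Hilb(\PP^3)$, and since the $\PGL_4$-action on $\Hilb(\PP^3)$ preserves Hilbert polynomials it fixes the open-and-closed component $\mathscr H=\Hilb_{[\Sigma_{2,2}]}(\PP^3)$. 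Restricting the associated-bundle construction to $\mathscr H$ and using the $\PGL_4$-equivariant isomorphism $\mathscr H\simeq\PP(\mathrm{Sym}^2(k^4)^\vee)\simeq\PP^9$ of Proposition~\ref{prop:Hilb_quadric_P9} (equivariant because it is induced by the linear action on $\mathrm{Sym}^2(k^4)^\vee$) yields $\Hilb_{[\Sigma_{2,2}]}(SB(\mathcal A)/X)\cong P\times^{\PGL_4}\PP^9$, a $\PP^9$-bundle over $X$.

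Next I would treat the subsystem-structure locus. By Definition~\ref{def:segre-hilbert-locus} and Lemma~\ref{lem:hilb-locus-is-quotient}, $P/G_{(2,2)}\cong \Hilb^{\Sigma_{2,2}}(SB(\mathcal A)/X)=P\times^{\PGL_4}\mathscr O_{\Sigma_{2,2}}$, where $\mathscr O_{\Sigma_{2,2}}\subset\mathscr H$ is the $\PGL_4$-orbit of $[\Sigma_{2,2}]$. By Proposition~\ref{prop:Hilb_quadric_P9} this orbit is exactly the locus of smooth quadrics, i.e.\ the complement of the discriminant hypersurface $\Delta\subset\PP^9$ (the degree-$4$ vanishing locus of the determinant of the associated symmetric $4\times4$ matrix). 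Since $\Delta$ is $\PGL_4$-invariant (a linear change of coordinates preserves the rank of a quadratic form), the associated bundle decomposes as $P\times^{\PGL_4}\PP^9=(P\times^{\PGL_4}\mathscr O_{\Sigma_{2,2}})\sqcup(P\times^{\PGL_4}\Delta)$ into an open part and a closed part; writing $\mathcal D:=P\times^{\PGL_4}\Delta$ for the relative discriminant, this exhibits $P/G_{(2,2)}$ as the open complement of $\mathcal D$ inside the $\PP^9$-bundle.

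Finally I would address the closure and boundary. The ambient bundle $P\times^{\PGL_4}\PP^9\to X$ is a projective-space bundle, hence projective over $X$, so any closed subscheme is projective over $X$. Because $\mathscr O_{\Sigma_{2,2}}$ is dense in $\PP^9$ and $\mathcal D$ contains no fiber of $P\times^{\PGL_4}\PP^9\to X$ --- both checked \'etale-locally on $X$, where the bundle becomes $\PP^9\times U$ and $\mathcal D$ becomes $\Delta\times U$ --- the closure of $P/G_{(2,2)}$ is all of $P\times^{\PGL_4}\PP^9$, with boundary exactly $\mathcal D$, and since $\Delta$ is an effective Cartier divisor on $\PP^9$ the twist $\mathcal D$ is a relative effective Cartier divisor, flat over $X$. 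The only real obstacle is a bookkeeping one: carefully recording the $\PGL_4$-equivariance of the identification $\mathscr H\simeq\PP^9$ and that under it the Segre orbit and the discriminant are precisely the $\PGL_4$-stable pieces. Once these equivariances are in hand, every remaining assertion follows formally by applying the associated-bundle functor $P\times^{\PGL_4}(-)$, which preserves open/closed decompositions, fiberwise density, and effective-Cartier-divisor structure.
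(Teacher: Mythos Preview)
Your proposal is correct and follows essentially the same approach as the paper's proof, which simply says to apply the associated-bundle identification of the relative Hilbert scheme (Lemma~\ref{lem:hilb-associated-bundle}) and use Proposition~\ref{prop:Hilb_quadric_P9} fiberwise. You have spelled out in considerably more detail the equivariance of $\mathscr H\simeq\PP^9$, the invariance of the discriminant, the density/closure argument, and the Cartier-divisor structure of the boundary, all of which the paper leaves implicit.
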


\begin{proof}
Apply the functorial identification of the relative Hilbert scheme with the associated bundle $P\times^{\PGL_4}\Hilb(\PP^3)$, and use Proposition~\ref{prop:Hilb_quadric_P9} fiberwise.
\end{proof}

\section{Entanglement filtration}
\label{sec:schmidt}

From here on we assume $\mathbf d=(d_A,d_B)$ with $n=d_A d_B$. Write $\Sigma_{A,B}:=\mathbb P^{d_A-1}\times \mathbb P^{d_B-1}$ for the product-state locus. In the bipartite case we also write $\Sigma_{A,B}=\Sigma_{(d_A,d_B)}$.

\begin{remark}[Global nature of determinantal geometry in the twisted setting]
Even though the determinantal loci $R_{\le r}\subset \PP(V_A\otimes V_B)$ are classical, they do \emph{not} make sense on a general Severi--Brauer scheme without extra data. A twisted $\PP^{n-1}$--bundle carries no canonical global tensor-product identification.
A $\mathbf d$--subsystem structure is precisely the additional datum required to globalize the entire entanglement filtration, its incidence resolutions, and the resulting intersection-theoretic invariants
uniformly over $X$.
\end{remark}

\subsection{Entanglement filtration and relative geometry}
\label{subsec:relative-sing-resolution}
\subsubsection{Schmidt-rank loci as $G_{\mathbf d}$--invariant subschemes}
Let $V_A:=k^{d_A}$ and $V_B:=k^{d_B}$. For $1\le r\le \min(d_A,d_B)$, let $R_{\le r}\subset \mathbb P(V_A\otimes V_B)$ be the classical determinantal variety of tensors of (matrix) rank $\le r$, defined by the $(r+1)\times(r+1)$ minors of a flattening matrix.
It is a closed, irreducible, $\GL(V_A)\times \GL(V_B)$--invariant subscheme.

\begin{definition}[Loci and strata]
\label{def:relative-schmidt}
Let $(\mathcal A,P_{\mathbf d})$ be an Azumaya algebra with a chosen subsystem structure.
Define the closed subscheme
\[
  \Sigma_{\le r}(\mathcal A,\mathbf d)
  \;:=\; P_{\mathbf d}\times^{G_{\mathbf d}} R_{\le r}
  \;\subset\; P_{\mathbf d}\times^{G_{\mathbf d}}\mathbb P(V_A\otimes V_B)
  \;\cong\; SB(\mathcal A).
\]
We call $\Sigma_{\le r}(\mathcal A,\mathbf d)$ the \emph{Schmidt-rank $\le r$ locus}, and $\Sigma_{= r}(\mathcal A,\mathbf d):=\Sigma_{\le r}(\mathcal A,\mathbf d)\setminus \Sigma_{\le r-1}(\mathcal A,\mathbf d)$ the \emph{Schmidt-rank $r$ stratum} when $r>1$. In particular, when $r=1$, they are called \emph{product-state locus} and \emph{product-state stratum}, respectively. We then have the natural \emph{entanglement filtration}:
\[
\Sigma_{\le 1}(\mathcal A,\mathbf d)\subset \Sigma_{\le 2}(\mathcal A,\mathbf d)\subset\cdots\subset SB(\mathcal A).
\]
\end{definition}
By definition, for all $r\ge 2$, every element of $\Sigma_{=r}$ is entangled, while every element of $\Sigma_{=1}$ is a product state. We define $\Sigma_{=0}=\emptyset$. 

\begin{proposition}[Relative Schmidt rank loci]
\label{prop:schmidt-descent}
Each $\Sigma_{\le r}(\mathcal A,\mathbf d)\subset SB(\mathcal A)$ is a well-defined closed subscheme, flat over $X$.
Moreover, for any morphism $f:Y\to X$,
\[
  \Sigma_{\le r}(f^*\mathcal A,\mathbf d) \;\cong\; f^*\Sigma_{\le r}(\mathcal A,\mathbf d)
\]
as closed subschemes of $SB(f^*\mathcal A)\cong f^*SB(\mathcal A)$.
\end{proposition}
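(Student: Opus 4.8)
The plan is to deduce all three assertions — that $\Sigma_{\le r}(\mathcal A,\mathbf d)$ is a genuine closed subscheme, that it is flat over $X$, and that its formation commutes with base change — from faithfully flat descent, in exact parallel with the $r=1$ case already recorded in Proposition~\ref{prop:segre-descends} and Definition~\ref{def:SSB}. The one input specific to the higher determinantal strata is the $G_{\mathbf d}$--invariance of the classical rank locus $R_{\le r}\subset\PP(V_A\otimes V_B)$, so I would isolate that first: with $\mathbf d=(d_A,d_B)$ the stabilizer is $G_{\mathbf d}=(\PGL_{d_A}\times\PGL_{d_B})\rtimes\Pi$, where $\Pi$ is trivial unless $d_A=d_B$, in which case $\Pi=\Z/2$ acts on $V_A\otimes V_B$ by the flip. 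The ideal of $(r+1)\times(r+1)$ minors of a flattening matrix is classically invariant under $M\mapsto gMh^{T}$ for $(g,h)\in\GL(V_A)\times\GL(V_B)$ and also under transposition $M\mapsto M^{T}$; hence $R_{\le r}$ is stable under all of $G_{\mathbf d}$, scheme-theoretically and not merely on points. I expect this to be the only step carrying genuine content — in the balanced case $d_A=d_B$ the transpose symmetry must be included, but this is exactly the fact that transposition preserves matrix rank, read off at the level of minor ideals.

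With invariance in hand, closedness and flatness follow by descent. Since $\mathcal A$ is Azumaya and $G_{\mathbf d}$ is smooth, the $G_{\mathbf d}$--torsor $P_{\mathbf d}\to X$ is trivial over some étale cover $U\to X$; a trivialization $P_{\mathbf d}|_U\cong G_{\mathbf d}\times U$ induces $SB(\mathcal A)|_U\cong\PP(V_A\otimes V_B)\times U$ under which $\Sigma_{\le r}(\mathcal A,\mathbf d)|_U$ is the closed subscheme $R_{\le r}\times U$. Over $U\times_X U$ the two induced trivializations differ by a $G_{\mathbf d}$--valued transition cocycle, which preserves $R_{\le r}$, so these local closed subschemes satisfy a descent datum and glue to a closed subscheme of $SB(\mathcal A)$; this is exactly the associated bundle $P_{\mathbf d}\times^{G_{\mathbf d}}R_{\le r}$ of Definition~\ref{def:relative-schmidt}, now seen to be represented by a scheme, to be a closed subscheme, and to be independent of the chosen cover and trivialization. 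For flatness I would argue étale-locally on $X$: $R_{\le r}\times_k U\to U$ is the base change of $R_{\le r}\to\Spec k$ and hence flat, so $\Sigma_{\le r}(\mathcal A,\mathbf d)\times_X U\to U$ is flat, and flatness descends along the faithfully flat $U\to X$.

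For base change along $f:Y\to X$, the pullback $f^{*}\mathcal A$ is Azumaya with $\PGL_n$--torsor $P\times_X Y$ and $P_{\mathbf d}\times_X Y\to Y$ is a $G_{\mathbf d}$--reduction of it, so $f^{*}\mathcal A$ carries a canonical $\mathbf d$--subsystem structure and $\Sigma_{\le r}(f^{*}\mathcal A,\mathbf d)$ is defined. I would then invoke that formation of associated bundles commutes with base change — the canonical isomorphism $(P_{\mathbf d}\times_X Y)\times^{G_{\mathbf d}}R_{\le r}\cong(P_{\mathbf d}\times^{G_{\mathbf d}}R_{\le r})\times_X Y$, verified after an fppf cover of $Y$ trivializing the torsor — together with the same identity for $\PP(V_A\otimes V_B)$ in place of $R_{\le r}$, which recovers $SB(f^{*}\mathcal A)\cong f^{*}SB(\mathcal A)$. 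Since $R_{\le r}\hookrightarrow\PP(V_A\otimes V_B)$ is $G_{\mathbf d}$--equivariant, these two identifications are compatible with the closed immersions, yielding $\Sigma_{\le r}(f^{*}\mathcal A,\mathbf d)\cong f^{*}\Sigma_{\le r}(\mathcal A,\mathbf d)$ inside $SB(f^{*}\mathcal A)$. The entire argument is routine faithfully flat descent once the single non-formal point — scheme-theoretic $G_{\mathbf d}$--stability of $R_{\le r}$, including the transpose symmetry in the balanced case — has been secured.
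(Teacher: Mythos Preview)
Your proof is correct and follows essentially the same approach as the paper: both argue that $R_{\le r}$ is $G_{\mathbf d}$--invariant, deduce that the local closed subschemes glue and are flat by working over a trivializing cover, and handle base change via functoriality of associated bundles. Your version is simply more detailed, in particular making explicit the transpose symmetry in the balanced case $d_A=d_B$, which the paper leaves implicit in the phrase ``invariant under the stabilizer $G_{\mathbf d}$.''
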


\begin{proof}
The determinantal variety $R_{\le r}\subset \PP(V_A\otimes V_B)$ is invariant under the stabilizer $G_{\mathbf d}\subset \PGL_n$. Hence, in a $G_{\mathbf d}$--adapted local trivialization of $SB(\mathcal A)$, the closed subscheme $R_{\le r}$ is carried consistently across overlaps by the transition functions. This produces a globally defined closed subscheme
\[
  \Sigma_{\le r}(\mathcal A,\mathbf d)=P_{\mathbf d}\times^{G_{\mathbf d}} R_{\le r}\ \subset\ SB(\mathcal A).
\]

Flatness follows because fppf-locally on $X$ the pair $\bigl(SB(\mathcal A),\Sigma_{\le r}(\mathcal A,\mathbf d)\bigr)$ is isomorphic to
$\bigl(\PP(V_A\otimes V_B),R_{\le r}\bigr)\times U$, and $R_{\le r}\times U\to U$ is flat. Compatibility with base change is formal for associated bundles.
\end{proof}

\begin{remark}
The filtration
\[
  \Sigma_{A,B}(\mathcal A)=\Sigma_{\le 1}(\mathcal A,\mathbf d)
  \subset \Sigma_{\le 2}(\mathcal A,\mathbf d)\subset \cdots \subset \Sigma_{\le \min(d_A,d_B)}(\mathcal A,\mathbf d)=SB(\mathcal A)
\]
is the algebro-geometric counterpart of the entanglement hierarchy by Schmidt rank. It exists only when a subsystem structure exists.
\end{remark}

\subsubsection{Relative singularities, normality, and canonical resolutions}

We continue in the bipartite case $\mathbf d=(d_A,d_B)$ with $n=d_A d_B$. Let $V_A:=k^{d_A}$ and $V_B:=k^{d_B}$, so that $SB(\mathcal A)_x\cong \PP(V_A\otimes V_B)$ in a $G_{\mathbf d}$--adapted local trivialization.

Recall that $R_{\le r}\subset \PP(V_A\otimes V_B)$ denotes the classical determinantal variety of tensors (of matrix rank) $\le r$, and that
\[
  \Sigma_{\le r}(\mathcal A,\mathbf d)\;=\;P_{\mathbf d}\times^{G_{\mathbf d}} R_{\le r}\ \subset\ SB(\mathcal A)\qquad \text{(Definition~\ref{def:relative-schmidt}).}
\]

Let
\[
  \Mat_{d_A\times d_B}:=\Hom(V_B^\vee,V_A)
\]
be the affine space of $d_A\times d_B$ matrices (rows indexed by $V_A$, columns indexed by $V_B$). Via either natural identification $V_A\otimes V_B \cong \Hom(V_A^\vee,V_B)\cong \Hom(V_B^\vee,V_A)$, matrix rank coincides with tensor (Schmidt) rank in the bipartite model.

Let $Z_{\le r}\subset \Mat_{d_A\times d_B}$ be the affine determinantal locus defined by the $(r+1)\times(r+1)$ minors.

\begin{proposition}[Singular locus of determinantal varieties]
\label{prop:det-singular-locus}
Let $1\le r\le \min(d_A,d_B)$.
\begin{enumerate}[label=(\roman*)]
\item If $1\le r<\min(d_A,d_B)$, then the singular locus is
\[
  \Sing(Z_{\le r})=Z_{\le r-1}
\qquad\text{and}\qquad
  \Sing(R_{\le r})=R_{\le r-1}.
\]
\item If $r=\min(d_A,d_B)$, then $R_{\le r}=\PP(V_A\otimes V_B)$ is smooth, hence
\[
  \Sing(R_{\le r})=\varnothing.
\]
\end{enumerate}
\end{proposition}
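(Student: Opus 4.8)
I would reduce everything to the affine cone. Since the $(r+1)\times(r+1)$ minors are homogeneous, $Z_{\le r}\subset\Mat_{d_A\times d_B}\cong V_A\otimes V_B$ is a cone with vertex $0$ and $R_{\le r}$ is its projectivization; because $Z_{\le r}\setminus\{0\}\to R_{\le r}$ is a $\Gm$-torsor, hence smooth surjective, a nonzero matrix $M$ is a singular point of $Z_{\le r}$ if and only if $[M]$ is a singular point of $R_{\le r}$. Part (ii) then drops out: for $r=\min(d_A,d_B)$ every matrix has rank $\le r$, so $Z_{\le r}=\Mat_{d_A\times d_B}$ and $R_{\le r}=\PP(V_A\otimes V_B)$ are smooth and have empty singular locus. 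For part (i) it suffices to show $\Sing(Z_{\le r})=Z_{\le r-1}$ and project down (noting $0\in Z_{\le r-1}$ since $r\ge1$, and using the convention $R_{\le 0}=\varnothing$, consistent with $\Sigma_{=0}=\varnothing$). Before the two main steps I would record that $Z_{\le r}$ is irreducible of dimension $r(d_A+d_B-r)$ — it is the closure of the image of $(P,Q)\mapsto PQ$ from $\Mat_{d_A\times r}\times\Mat_{r\times d_B}$, whose source is irreducible — and that when $r<\min(d_A,d_B)$ it is a \emph{proper} closed subvariety.

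\textbf{Step 1: singular along $Z_{\le r-1}$.} For $M$ of rank $\le r-1$ I would apply the Jacobian criterion. Laplace expansion shows that every first partial derivative of every $(r+1)\times(r+1)$ minor, evaluated at $M$, is (up to sign) an $r\times r$ minor of $M$, hence $0$ since $\mathrm{rank}(M)\le r-1$. So the Jacobian of the generators vanishes identically at $M$, the Zariski tangent space $T_MZ_{\le r}$ is the full $d_Ad_B$-dimensional ambient space, and since $\dim_M Z_{\le r}=r(d_A+d_B-r)<d_Ad_B$ (equivalently $(d_A-r)(d_B-r)>0$) the point is singular. This gives $Z_{\le r-1}\subseteq\Sing(Z_{\le r})$.

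\textbf{Step 2: smooth along $Z_{=r}$.} Here I would use homogeneity under $\GL(V_A)\times\GL(V_B)$ acting by $(g,h)\cdot M=gMh^{-1}$: this preserves every $Z_{\le s}$ and is transitive on rank-$r$ matrices (Gaussian elimination), so it is enough to check smoothness at the standard representative $E_r$ (identity $I_r$ in the top-left $r\times r$ block, zero elsewhere). On the chart $\{\det A\neq0\}$ of block matrices $\begin{pmatrix}A&B\\ C&D\end{pmatrix}$ with $A$ the top-left $r\times r$ block, invertible block row/column operations with regular entries put $M$ in the form $\begin{pmatrix}A&0\\ 0&D-CA^{-1}B\end{pmatrix}$; since the ideal of $(r+1)$-minors of a matrix over a ring is unchanged by multiplication by invertible matrices on either side, on this chart that ideal is generated exactly by the entries of the Schur complement $D-CA^{-1}B$. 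Hence $Z_{\le r}$ is there the graph of the morphism $(A,B,C)\mapsto CA^{-1}B$, so it is smooth of dimension $r^2+r(d_B-r)+(d_A-r)r=r(d_A+d_B-r)$, and $E_r$ lies in this chart. Transporting by the group action, $Z_{\le r}$ is smooth along all of $Z_{=r}$; with Step 1 and the partition $Z_{\le r}=Z_{=r}\sqcup Z_{\le r-1}$ this yields $\Sing(Z_{\le r})=Z_{\le r-1}$, and projectivizing through the $\Gm$-torsor gives $\Sing(R_{\le r})=R_{\le r-1}$.

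\textbf{Expected main obstacle.} Everything except one point is formal. The step I expect to require the most care is the \emph{scheme-theoretic} Schur-complement identity in Step 2 — that on $\{\det A\neq0\}$ the $(r+1)$-minor ideal is genuinely generated by the entries of $D-CA^{-1}B$, rather than merely that the two cut out the same set. I would deduce it from invariance of determinantal ideals under multiplication by invertible matrices together with the block-diagonal normal form above; alternatively one can short-circuit it by quoting the classical smoothness and normality theory of generic determinantal varieties (e.g.\ Bruns--Vetter, \emph{Determinantal Rings}), after which only the dimension count $\dim Z_{\le r}=r(d_A+d_B-r)$ is needed to run Step 1.
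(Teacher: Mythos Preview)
Your argument is correct and complete. The paper's own proof, by contrast, is essentially a citation: it asserts that the affine statement $\Sing(Z_{\le r})=Z_{\le r-1}$ is classical, notes that projectivizing transfers it to $R_{\le r}$, and refers to Bruns--Vetter and Weyman for details. You supply the actual argument behind that citation. Your two ingredients --- the Jacobian/Laplace computation showing the tangent space is full-dimensional along $Z_{\le r-1}$, and the Schur-complement chart exhibiting $Z_{\le r}$ as a graph near a rank-$r$ point --- are precisely the standard proof one would find upon chasing those references, so there is no methodological divergence, only a difference in the level of detail exposed. Your reduction from $R_{\le r}$ to $Z_{\le r}$ via the $\Gm$-torsor is the same projectivization step the paper invokes. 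The scheme-theoretic concern you flag (that the $(r{+}1)$-minor ideal on the chart $\{\det A\neq 0\}$ is genuinely generated by the Schur-complement entries) is real but handled correctly by your invariance-under-invertible-multiplication argument; this is exactly the kind of point the cited references make precise.
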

\begin{proof}
This is classical determinantal geometry: for the affine determinantal variety $Z_{\le r}$ of $d_A\times d_B$ matrices, the singular locus is $Z_{\le r-1}$ for $r<\min(d_A,d_B)$, and $Z_{\le r}$ is smooth for $r=\min(d_A,d_B)$. Projectivizing gives $\Sing(R_{\le r})=R_{\le r-1}$ in the projective space. See e.g.\ standard references on determinantal varieties (e.g.\ Bruns--Vetter \cite{BrunsVetter1988} or Weyman \cite{Weyman_2003}) for detailed proofs.
\end{proof}

There are several standard desingularizations of $R_{\le r}$. For our relative purposes (and to allow possible discrete symmetries when $d_A=d_B$), it is convenient to use a symmetric incidence resolution involving both Grassmannians.

Let $\Gr_A(r):=\Gr(r,V_A)$ and $\Gr_B(r):=\Gr(r,V_B)$, with tautological subbundles $\mathcal U_A\subset V_A\otimes\mathcal O_{\Gr_A(r)}$ and $\mathcal U_B\subset V_B\otimes\mathcal O_{\Gr_B(r)}$.

\begin{definition}[Symmetric incidence resolution of $R_{\le r}$]
\label{def:incidence-resolution}
Define the smooth projective variety
\[
  \widetilde{R}_{\le r}
  \;:=\;
  \PP(\mathcal U_A\boxtimes \mathcal U_B)
  \ \longrightarrow\
  \Gr_A(r)\times \Gr_B(r),
\]
and let
\[
  \rho_r:\widetilde{R}_{\le r}\longrightarrow \PP(V_A\otimes V_B)
\]
be the morphism induced by the natural inclusion $\mathcal U_A\boxtimes\mathcal U_B \hookrightarrow (V_A\otimes V_B)\otimes\mathcal O$.
Its image lies in $R_{\le r}$, since any tensor in $U_A\otimes U_B$ has rank $\le r$.
\end{definition}

\begin{proposition}[Properties of the incidence resolution]
\label{prop:incidence-resolution-properties}
For $1\le r\le \min(d_A,d_B)$, the morphism
\[
  \rho_r:\widetilde{R}_{\le r}\longrightarrow R_{\le r}
\]
is projective and birational. Moreover, 
\begin{enumerate}[label=(\roman*)]
\item $\widetilde{R}_{\le r}$ is smooth.
\item $\rho_r$ restricts to an isomorphism over the open locus
\[
  R_{=r}\;:=\;R_{\le r}\setminus R_{\le r-1}
\]
of tensors of rank exactly $r$. (In particular, if $r<\min(d_A,d_B)$ then $R_{=r}$ is the smooth locus of $R_{\le r}$
by Proposition~\ref{prop:det-singular-locus}.) 
\item Consequently, $\rho_r$ is a resolution of singularities of $R_{\le r}$ (and for $r=\min(d_A,d_B)$, a projective birational morphism from a smooth variety onto the already smooth $R_{\le r}=\PP(V_A\otimes V_B)$).
\end{enumerate}
The construction is functorial for the natural $\GL(V_A)\times \GL(V_B)$--action. Moreover, when $d_A=d_B$ it is compatible with the factor-swap involution (it exchanges the two Grassmannian factors $\Gr_A(r)$ and $\Gr_B(r)$). Hence the construction is equivariant for the full stabilizer $G_{\mathbf d}\subset \PGL(V_A\otimes V_B)$.
\end{proposition}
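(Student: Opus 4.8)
The plan is to exhibit $\rho_r$ as a linear projection from a product ambient space, produce an explicit inverse over the rank-exactly-$r$ stratum, and then promote the resulting symmetry to $G_{\mathbf d}$-equivariance.

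For (i) I would simply note that $\widetilde R_{\le r}=\PP(\mathcal U_A\boxtimes\mathcal U_B)$ is the projectivization of a rank-$r^2$ locally free sheaf on the smooth projective variety $\Gr_A(r)\times\Gr_B(r)$, hence is itself smooth and projective. For the morphism: the inclusion $\mathcal U_A\boxtimes\mathcal U_B\hookrightarrow (V_A\otimes V_B)\otimes\mathcal O$ is fiberwise injective of constant rank $r^2$, hence a subbundle inclusion, so it induces a closed immersion $\widetilde R_{\le r}\hookrightarrow \PP(V_A\otimes V_B)\times\Gr_A(r)\times\Gr_B(r)$; I would then define $\rho_r$ as the composite with the first projection, which makes projectivity automatic. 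The fiber over $(W_A,W_B)$ maps onto $\PP(W_A\otimes W_B)$, and every tensor in $W_A\otimes W_B$ has rank $\le r$, so the image lies in $R_{\le r}$; surjectivity onto $R_{\le r}$ follows because a matrix $M$ of rank $\le r$ admits $r$-dimensional subspaces $W_A\supseteq\im M$ and $W_B\supseteq\im M^\vee$ (using $r\le\min(d_A,d_B)$), whence the corresponding tensor lies in $W_A\otimes W_B$.

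The crux is (ii) together with birationality. Over $R_{=r}=R_{\le r}\setminus R_{\le r-1}$ the tautological matrix has constant rank $r$, so its image and coimage define morphisms $R_{=r}\to\Gr_A(r)$ and $R_{=r}\to\Gr_B(r)$ respectively; together with the tautological line these assemble into a section $\sigma\colon R_{=r}\to\widetilde R_{\le r}$ with $\rho_r\circ\sigma=\mathrm{id}$. Conversely, if $t\in W_A\otimes W_B$ has rank exactly $r$ with $\dim W_A=\dim W_B=r$, then $\im M_t\subseteq W_A$ forces $W_A=\im M_t$ and likewise $W_B=\im M_t^\vee$, so $\sigma\circ\rho_r=\mathrm{id}$ on $\rho_r^{-1}(R_{=r})$ as well; hence $\rho_r$ is an isomorphism over $R_{=r}$. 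Since $R_{\le r}$ is irreducible with $R_{=r}$ dense, and $\rho_r^{-1}(R_{=r})$ is a nonempty (hence dense) open of the irreducible $\widetilde R_{\le r}$, this also gives birationality. The step I expect to be the real work is verifying that $\sigma$ is a genuine morphism of schemes and a scheme-theoretic inverse of $\rho_r$ over $R_{=r}$ — i.e. that on the constant-rank locus of a map of vector bundles the image and coimage are honest sub/quotient bundles classifying maps to the Grassmannians, and that the two composites are the identity on the nose and not merely bijective on points; everything else is formal.

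Then (iii) follows immediately: for $r=\min(d_A,d_B)$ one has $R_{\le r}=\PP(V_A\otimes V_B)$ already smooth and $\rho_r$ projective birational from a smooth source, while for $r<\min(d_A,d_B)$ Proposition~\ref{prop:det-singular-locus} identifies $R_{=r}$ with the smooth locus and gives $\Sing R_{\le r}=R_{\le r-1}$, so $\rho_r$ is an isomorphism over the smooth locus and is a resolution. For equivariance I would observe that $\GL(V_A)\times\GL(V_B)$ acts compatibly on $\Gr_A(r)$, $\Gr_B(r)$, on the tautological subbundles, and on $(V_A\otimes V_B)\otimes\mathcal O$, and that both the defining closed immersion and $\rho_r$ are equivariant; since scalars act trivially on all the projective and Grassmannian quotients the action factors through the image of $\GL(V_A)\times\GL(V_B)$ in $\PGL_n$. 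When $d_A=d_B$, the transpose identification $V_A\otimes V_B\cong V_B\otimes V_A$ preserves each $R_{\le r}$ and lifts to $\widetilde R_{\le r}$ by exchanging the two Grassmannian factors, so the construction is equivariant for the factor swap as well. By the structure of the Segre stabilizer recalled above (Westwick), $G_{\mathbf d}$ is generated by the image of $\prod_i\PGL_{d_i}$ and the permutations of equal-dimensional factors, so the whole construction is $G_{\mathbf d}$-equivariant — which is precisely what is needed to twist it by $P_{\mathbf d}$ into a relative resolution over $X$ in the results that follow.
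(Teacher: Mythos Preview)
Your proposal is correct and follows essentially the same approach as the paper's proof: smoothness via the projective-bundle description, birationality via the explicit image/coimage inverse over $R_{=r}$, and equivariance by naturality of the construction. The paper's proof is in fact much terser than yours (it simply asserts that the image subspaces vary regularly and defers to standard references on incidence resolutions), so your version spells out more of the details; the step you flag as the real work is exactly the standard constant-rank-locus argument the paper is implicitly invoking.
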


\begin{proof}
Smoothness of $\widetilde{R}_{\le r}=\PP(\mathcal U_A\boxtimes \mathcal U_B)$ follows because it is a projective bundle over the smooth base $\Gr_A(r)\times \Gr_B(r)$. The map $\rho_r$ is projective by construction and is birational since it restricts to an isomorphism over the open rank-$r$ locus $R_{=r}$, where the image subspaces $\mathrm{im}(\psi_A)$ and $\mathrm{im}(\psi_B)$ vary regularly. Equivariance under $\GL(V_A)\times\GL(V_B)$ (and the factor-swap involution when $d_A=d_B$) is functorial. For details, see standard treatments of incidence resolutions of determinantal varieties.
\end{proof}

\begin{remark}[One-sided resolutions and small resolutions]
There are also standard one-sided incidence resolutions using only $\Gr_B(r)$ (image) or only $\Gr_A(r)$ (kernel/transpose). The symmetric resolution above dominates both by forgetting one Grassmannian factor. In certain special cases one-sided resolutions can be small. We do not use smallness here.
\end{remark}

\subsubsection{Relative entangling: singular locus and resolution for $\Sigma_{\le r}(\mathcal A,\mathbf d)$}

\begin{definition}[Relative incidence resolution of Schmidt-rank loci]
\label{def:relative-incidence-resolution}
Let $(\mathcal A,P_{\mathbf d})$ be an Azumaya algebra with a chosen subsystem structure. Define the $X$--scheme
\[
  \widetilde{\Sigma}_{\le r}(\mathcal A,\mathbf d)
  \;:=\;
  P_{\mathbf d}\times^{G_{\mathbf d}} \widetilde{R}_{\le r},
\]
and let
\[
  \rho_r:\widetilde{\Sigma}_{\le r}(\mathcal A,\mathbf d)\ \longrightarrow\ \Sigma_{\le r}(\mathcal A,\mathbf d)
\]
be the induced morphism from the $G_{\mathbf d}$--equivariant map $\widetilde{R}_{\le r}\to R_{\le r}$.
\end{definition}

\begin{theorem}[Relative singular locus and resolution]
\label{thm:relative-sing-and-resolution}
Let $X$ be smooth over $k$ (char $0$) and let $(\mathcal A,P_{\mathbf d})$ be an Azumaya algebra with a chosen subsystem structure.
Fix $1\le r\le \min(d_A,d_B)$. Then:
\begin{enumerate}[label=(\roman*)]
\item \textbf{Relative singular locus:}
If $1\le r<\min(d_A,d_B)$, then scheme-theoretically
\[
  \Sing\bigl(\Sigma_{\le r}(\mathcal A,\mathbf d)\bigr)
  \;=\;
  \Sigma_{\le r-1}(\mathcal A,\mathbf d).
\]
In particular, let $\pi_r:=\pi|_{{\Sigma_{\le r}(\mathcal A,\mathbf d)}}$. The morphism $\pi_r:\Sigma_{\le r}(\mathcal A,\mathbf d)\to X$ is smooth exactly over the open stratum $\Sigma_{=r}(\mathcal A,\mathbf d):=\Sigma_{\le r}(\mathcal A,\mathbf d)\setminus \Sigma_{\le r-1}(\mathcal A,\mathbf d)$.

If $r=\min(d_A,d_B)$, then $\Sigma_{\le r}(\mathcal A,\mathbf d)=SB(\mathcal A)$ is smooth over $X$, hence
\[
  \Sing\bigl(\Sigma_{\le r}(\mathcal A,\mathbf d)\bigr)=\varnothing.
\]
(Here, one may adopt the convention $\Sigma_{\le 0}(\mathcal A,\mathbf d)=\varnothing$ so that for $r=1$ the formula reads $\Sing(\Sigma_{\le 1})=\Sigma_{\le 0}=\varnothing$.)
\item \textbf{Resolution:}
The total space $\widetilde{\Sigma}_{\le r}(\mathcal A,\mathbf d)$ is smooth over $X$, and
\[
  \rho_r:\widetilde{\Sigma}_{\le r}(\mathcal A,\mathbf d)\longrightarrow \Sigma_{\le r}(\mathcal A,\mathbf d)
\]
is a projective birational morphism which is an isomorphism over $\Sigma_{=r}(\mathcal A,\mathbf d)$. In particular, $\rho_r$ gives a resolution of singularities of $\Sigma_{\le r}(\mathcal A,\mathbf d)$.
\item \textbf{Base change compatibility:}
For any morphism $f:Y\to X$,
\[
  f^*\widetilde{\Sigma}_{\le r}(\mathcal A,\mathbf d)\ \cong\ \widetilde{\Sigma}_{\le r}(f^*\mathcal A,\mathbf d),
\qquad
  f^*\Sigma_{\le r}(\mathcal A,\mathbf d)\ \cong\ \Sigma_{\le r}(f^*\mathcal A,\mathbf d),
\]
and these identifications intertwine the resolution maps.
\end{enumerate}
\end{theorem}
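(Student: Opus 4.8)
The plan is to exploit that all three objects $\Sigma_{\le r}(\mathcal A,\mathbf d)$, $\widetilde{\Sigma}_{\le r}(\mathcal A,\mathbf d)$, and the morphism $\rho_r$ are obtained by twisting the fixed $G_{\mathbf d}$--equivariant data $(R_{\le r},\widetilde R_{\le r},\rho_r)$ on $\PP(V_A\otimes V_B)$ along the $G_{\mathbf d}$--torsor $P_{\mathbf d}$. Since $\mathrm{char}(k)=0$, the group $G_{\mathbf d}$ is smooth affine, so $P_{\mathbf d}$ is étale-locally trivial: there is an étale cover $U\to X$ over which $P_{\mathbf d}$ trivializes, inducing compatible identifications $SB(\mathcal A)|_U\cong\PP^{n-1}\times_k U$, $\Sigma_{\le r}(\mathcal A,\mathbf d)|_U\cong R_{\le r}\times_k U$, and $\widetilde{\Sigma}_{\le r}(\mathcal A,\mathbf d)|_U\cong \widetilde R_{\le r}\times_k U$, with $\rho_r|_U=\rho_r\times\id_U$; here the determinantal and incidence loci land in their standard positions because $G_{\mathbf d}$ preserves them. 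Each assertion is étale-local on $X$ (formation of the non-smooth locus of a morphism and of the absolute singular locus commutes with étale base change; smoothness, projectivity, and birationality of a morphism descend along étale covers), so it suffices to prove the corresponding statement for the product with $U$ and invoke descent.

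For (i), over $U$ we have $\Sigma_{\le r}(\mathcal A,\mathbf d)|_U\cong R_{\le r}\times_k U$ with $U$ smooth over $k$ (as $X$ is). When $1\le r<\min(d_A,d_B)$, Proposition~\ref{prop:det-singular-locus} gives $\Sing(R_{\le r})=R_{\le r-1}$ scheme-theoretically, and since $\Omega_{R_{\le r}\times_k U/k}=p_1^*\Omega_{R_{\le r}/k}\oplus p_2^*\Omega_{U/k}$ with the second summand locally free, the Fitting-ideal description of the singular locus yields $\Sing(R_{\le r}\times_k U)=\Sing(R_{\le r})\times_k U=R_{\le r-1}\times_k U$ as schemes. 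Descending along $U\to X$ gives $\Sing(\Sigma_{\le r}(\mathcal A,\mathbf d))=\Sigma_{\le r-1}(\mathcal A,\mathbf d)$. For the relative statement, $\pi_r$ is flat by Proposition~\ref{prop:schmidt-descent}, and over $U$ it is the projection $R_{\le r}\times_k U\to U$, i.e.\ the base change of $R_{\le r}\to\Spec k$, which is smooth exactly over the regular locus $R_{=r}$ of $R_{\le r}$; hence $\pi_r$ is smooth exactly over $\Sigma_{=r}(\mathcal A,\mathbf d)$. When $r=\min(d_A,d_B)$ one has $R_{\le r}=\PP^{n-1}$, so $\Sigma_{\le r}(\mathcal A,\mathbf d)=SB(\mathcal A)$ is a $\PP^{n-1}$--bundle over $X$, smooth with empty singular locus.

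For (ii), $\widetilde{\Sigma}_{\le r}(\mathcal A,\mathbf d)|_U\cong\widetilde R_{\le r}\times_k U$ is smooth over $k$ by Proposition~\ref{prop:incidence-resolution-properties}(i), hence smooth over $U$, and smoothness over $X$ follows by étale descent. Over $U$ the map $\rho_r\times\id_U$ is projective (the base change of the projective morphism $\rho_r$) and birational, restricting to an isomorphism over $R_{=r}\times_k U$ by Proposition~\ref{prop:incidence-resolution-properties}(ii); these properties descend, so $\rho_r$ is projective, an isomorphism over $\Sigma_{=r}(\mathcal A,\mathbf d)$, and birational because $\Sigma_{=r}(\mathcal A,\mathbf d)$ is fiberwise dense ($R_{=r}$ being dense in the irreducible $R_{\le r}$). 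Together with (i), $\Sigma_{=r}(\mathcal A,\mathbf d)$ is the smooth locus of $\Sigma_{\le r}(\mathcal A,\mathbf d)$, so $\rho_r$ is a resolution of singularities. Part (iii) is formal: for any $f:Y\to X$ one has a canonical isomorphism $f^*(P_{\mathbf d}\times^{G_{\mathbf d}}Z)\cong (f^*P_{\mathbf d})\times^{G_{\mathbf d}}Z$ for any $G_{\mathbf d}$--scheme $Z$, and $f^*P_{\mathbf d}$ is exactly the $G_{\mathbf d}$--reduction defining the pulled-back subsystem structure on $f^*\mathcal A$; applying this with $Z=R_{\le r}$ and $Z=\widetilde R_{\le r}$ gives the stated identifications, and they intertwine the $\rho_r$'s because these are all induced by the single fixed $G_{\mathbf d}$--equivariant map $\widetilde R_{\le r}\to R_{\le r}$, which is insensitive to base change.

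The main obstacle I anticipate is the scheme-theoretic precision in (i): one must verify that $\Sing(\Sigma_{\le r}(\mathcal A,\mathbf d))=\Sigma_{\le r-1}(\mathcal A,\mathbf d)$ holds with its natural (Fitting-ideal) scheme structure and not merely set-theoretically, which requires that the singular-locus ideal is unaffected by the smooth factor $U$ and is compatible with étale descent — both true, but they are the points deserving care. Everything else reduces cleanly to the classical determinantal statements recalled in Propositions~\ref{prop:det-singular-locus} and~\ref{prop:incidence-resolution-properties}.
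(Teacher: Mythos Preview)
Your proof is correct and follows essentially the same route as the paper's: trivialize $P_{\mathbf d}$ étale-locally (using smoothness of $G_{\mathbf d}$ in characteristic $0$), reduce each assertion to the product $R_{\le r}\times_k U$, and invoke Propositions~\ref{prop:det-singular-locus} and~\ref{prop:incidence-resolution-properties} together with the Fitting-ideal/$\Omega^1$ argument for the scheme-theoretic singular locus of a product with a smooth factor. The paper's proof is organized identically, and your anticipated ``main obstacle'' is precisely the point the paper singles out for care.
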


\begin{proof}
Choose an fppf cover (in fact \'etale since $G_{\mathbf d}$ is smooth) $U\to X$ trivializing $P_{\mathbf d}$. After choosing a trivialization of $P_{\mathbf d}|_U$, we obtain isomorphisms
\[
  SB(\mathcal A)|_U\ \cong\ \PP(V_A\otimes V_B)\times U,
\qquad
  \Sigma_{\le r}(\mathcal A,\mathbf d)|_U\ \cong\ R_{\le r}\times U,
\]
and similarly
\[
  \widetilde{\Sigma}_{\le r}(\mathcal A,\mathbf d)|_U
  \ \cong\ \widetilde{R}_{\le r}\times U,
\qquad
  \rho_r|_U\ =\ (\rho_r:\widetilde{R}_{\le r}\to R_{\le r})\times \id_U.
\]

\medskip
\noindent\textbf{(i) Singular locus.}
Assume first that $1\le r<\min(d_A,d_B)$. By Proposition~\ref{prop:det-singular-locus}, $\Sing(R_{\le r})=R_{\le r-1}$. Since $U$ is smooth, taking the product with $U$ preserves singular loci:
\[
  \Sing(R_{\le r}\times U)\;=\;\Sing(R_{\le r})\times U\;=\;R_{\le r-1}\times U.
\]
Indeed, the singular locus is defined by scheme-theoretically by the appropriate Fitting ideal of $\Omega^1$, and for $U$ smooth one has $\Omega^1_{R_{\le r}\times U/k}\cong \mathrm{pr}_1^*\Omega^1_{R_{\le r}/k}\oplus \mathrm{pr}_2^*\Omega^1_{U/k}$, so the Fitting ideal pulls back from $R_{\le r}$.

Transporting back via the identifications gives
\[
  \Sing\bigl(\Sigma_{\le r}(\mathcal A,\mathbf d)|_U\bigr)
  \;=\;
  \Sigma_{\le r-1}(\mathcal A,\mathbf d)|_U.
\]
Because $\Sing(-)$ is defined scheme-theoretically by the relevant Fitting ideal of $\Omega^1$ and Fitting ideals commute with étale base change, the equality descends, hence $\Sing(\Sigma_{\le r}(\mathcal A,\mathbf d))=\Sigma_{\le r-1}(\mathcal A,\mathbf d)$ scheme-theoretically on $X$. The equivalent smoothness statement for $\pi_r$ follows immediately.

If $r=\min(d_A,d_B)$, then $R_{\le r}=\PP(V_A\otimes V_B)$ is smooth, so $\Sigma_{\le r}(\mathcal A,\mathbf d)|_U\cong \PP(V_A\otimes V_B)\times U$ is smooth. Hence $\Sigma_{\le r}(\mathcal A,\mathbf d)=SB(\mathcal A)$ is smooth over $X$ and its singular locus is empty.

\medskip
\noindent\textbf{(ii) Resolution and smoothness over $X$.}
By Proposition~\ref{prop:incidence-resolution-properties}, $\widetilde{R}_{\le r}$ is smooth and $\rho_r:\widetilde{R}_{\le r}\to R_{\le r}$ is a projective birational morphism, an isomorphism over $R_{=r}$. Therefore $\widetilde{R}_{\le r}\times U$ is smooth over $U$, and the product map $(\rho_r\times \id_U)$ is a resolution of $R_{\le r}\times U$. By descent (again using that smoothness and properness are fpqc-local on the source/target), $\widetilde{\Sigma}_{\le r}(\mathcal A,\mathbf d)$ is smooth over $X$, and $\rho_r$ is projective birational and an isomorphism over $\Sigma_{=r}$.

\medskip
\noindent\textbf{(iii) Base change.}
Both $\Sigma_{\le r}$ and $\widetilde{\Sigma}_{\le r}$ are defined as associated bundles $P_{\mathbf d}\times^{G_{\mathbf d}}(\,\cdot\,)$, hence commute with pullback in the base. The compatibility of the maps follows formally from functoriality of associated bundles.
\end{proof}

\begin{corollary}
\label{cor:relative-normal-CM}
For $1\le r<\min(d_A,d_B)$, the locus $\Sigma_{\le r}(\mathcal A,\mathbf d)$ is normal and Cohen--Macaulay. Moreover, in characteristic $0$ it has rational singularities (hence in particular is Cohen--Macaulay).
\end{corollary}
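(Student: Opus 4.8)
The plan is to reduce the statement to the classical case by étale-local triviality, exactly in the spirit of the proof of Theorem~\ref{thm:relative-sing-and-resolution}. First I would choose an étale cover $U\to X$ trivialising the $G_{\mathbf d}$-torsor $P_{\mathbf d}$ (étale is available because $G_{\mathbf d}$ is smooth in characteristic $0$), so that $\Sigma_{\le r}(\mathcal A,\mathbf d)|_U\cong R_{\le r}\times U$ with $U$ smooth over $k$, and the induced map $q\colon \Sigma_{\le r}(\mathcal A,\mathbf d)|_U\to \Sigma_{\le r}(\mathcal A,\mathbf d)$ is étale and surjective. Since normality, Cohen--Macaulayness, and (in characteristic $0$) the property of having rational singularities can all be checked étale-locally on the source, it then suffices to verify these three properties for $R_{\le r}\times U$.

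The substantive input is classical determinantal geometry: for $1\le r<\min(d_A,d_B)$ the variety $R_{\le r}\subset \PP(V_A\otimes V_B)$ is normal and Cohen--Macaulay, and in characteristic $0$ it has rational singularities (Hochster--Eagon for the Cohen--Macaulay property, Kempf for rational singularities; see Bruns--Vetter \cite{bruns2006determinantal} and Weyman \cite{Weyman_2003}). To propagate these to the product $R_{\le r}\times U$, I would use that the projection $R_{\le r}\times U\to R_{\le r}$ is smooth (base change of $U\to\Spec k$): normality passes along smooth morphisms by Serre's $(R_1)+(S_2)$ criterion together with geometric reducedness of $U$ over $k$; Cohen--Macaulayness passes along smooth (hence flat with regular fibres) morphisms; and for rational singularities I would base-change the incidence resolution $\rho_r$ of Proposition~\ref{prop:incidence-resolution-properties} and Definition~\ref{def:relative-incidence-resolution} over $U$ and apply flat base change along $R_{\le r}\times U\to R_{\le r}$, deducing $(\rho_r)_*\mathcal O=\mathcal O$ and $R^i(\rho_r)_*\mathcal O=0$ for $i>0$ on $R_{\le r}\times U$ from the corresponding vanishing on $R_{\le r}$. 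Finally I would descend along the étale surjection $q$, using that pushforward and higher direct images commute with étale base change and that $\Sigma_{\le r}(\mathcal A,\mathbf d)$ is by then known to be normal; this yields that $\Sigma_{\le r}(\mathcal A,\mathbf d)$ is normal, Cohen--Macaulay, and has rational singularities. (The Cohen--Macaulay conclusion can also be recovered \emph{a posteriori}, since rational singularities in characteristic $0$ are Cohen--Macaulay.)

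The argument is largely bookkeeping, and the main obstacle is purely one of being careful about which properties are smooth-local on the source and étale-local for descent. Normality and Cohen--Macaulayness are entirely routine here; the only point that deserves a genuine argument is the behaviour of rational singularities under the smooth base change $R_{\le r}\times U\to R_{\le r}$ and under étale descent along $q$. My preferred way to sidestep any delicacy is to work throughout with the explicit resolution $\rho_r$ (which Theorem~\ref{thm:relative-sing-and-resolution}(ii) already exhibits as a resolution of $\Sigma_{\le r}(\mathcal A,\mathbf d)$), reducing ``rational singularities'' to the vanishing $R^{>0}(\rho_r)_*\mathcal O=0$ together with $(\rho_r)_*\mathcal O=\mathcal O$, for which flat base change and the étale-localness of coherent cohomology make the reduction to the classical determinantal statement immediate.
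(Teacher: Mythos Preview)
Your proposal is correct and follows essentially the same approach as the paper's proof: étale-locally trivialise $P_{\mathbf d}$ to reduce to $R_{\le r}\times U$, invoke the classical determinantal facts, and observe that normality, Cohen--Macaulayness, and rational singularities are preserved by products with smooth schemes and can be checked étale-locally. Your version is more detailed---in particular your use of the explicit resolution $\rho_r$ and flat base change to handle rational singularities is a nice way to make precise what the paper simply asserts (``taking the product with a smooth scheme preserves these properties'').
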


\begin{proof}
These properties are \'{e}tale local on the source. Over an \'{e}tale cover $U\to X$ trivializing $P_{\mathbf d}$ one has
$\Sigma_{\le r}|_U\cong R_{\le r}\times U$. The classical determinantal variety $R_{\le r}$ is normal and Cohen--Macaulay (and has rational singularities in char $0$), and taking the product with a smooth scheme preserves these properties.
Therefore $\Sigma_{\le r}(\mathcal A,\mathbf d)$ has the same local singularity profile.
\end{proof}

\begin{remark}
The resolution $\rho_r$ is canonical once the subsystem structure is fixed, and it exists uniformly in families. Thus one can apply the usual birational and intersection theoretic constructions to the relative Schmidt rank loci. 
\end{remark}

\subsection{Cycle classes and computability in the split model}
The construction gives the cycle class
\[
  [\Sigma_{\le r}(\mathcal A,\mathbf d)] \in CH^*(SB(\mathcal A)).
\]
This cycle class is not intrinsic to $\mathcal A$, but becomes canonical once a subsystem structure $\mathbf d$ is chosen and fixed.

\begin{corollary}[A computable numerical invariant in twisted families]
\label{cor:twisted_degree_pushforward}
Assume $X$ is connected and smooth, and let $(\mathcal A,P_{\mathbf d})$ be equipped with a bipartite subsystem structure $\mathbf d=(d_A,d_B)$.
Let $\pi:SB(\mathcal A)\to X$ and write $H:=c_1(\cO_{SB(\mathcal A)}(1))$. For $1\le r\le \min(d_A,d_B)$ set
\[
N_r:=\dim(R_{\le r})=r(d_A+d_B-r)-1.
\]
Then the class
\[
\pi_*\bigl(H^{N_r}\cap [\Sigma_{\le r}(\mathcal A,\mathbf d)]\bigr)\ \in\ CH^0(X)
\]
is the constant integer $\deg(R_{\le r})$ on each connected component of $X$. So every geometric fiber $(\Sigma_{\le r})_x\subset SB(\mathcal A)_x\simeq \PP^{n-1}$ has the same projective degree $\deg(R_{\le r})$ as the classical determinantal variety.
\end{corollary}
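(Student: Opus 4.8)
The plan is to read $\pi_*\bigl(H^{N_r}\cap[\Sigma_{\le r}(\mathcal A,\mathbf d)]\bigr)$ off as an element of $CH^0(X)$, observe that this group is $\mathbb Z$ because $X$ is smooth and connected, and then pin down the integer by restricting everything to one geometric fiber, where the Severi--Brauer twisting disappears and the computation reduces to the classical one.

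First I would do the dimension bookkeeping. By Proposition~\ref{prop:schmidt-descent} the structure map $\pi_r:=\pi|_{\Sigma_{\le r}(\mathcal A,\mathbf d)}$ is flat, and on an étale cover $U\to X$ trivialising $P_{\mathbf d}$ it is the projection $R_{\le r}\times U\to U$. The affine determinantal locus $Z_{\le r}\subset\Mat_{d_A\times d_B}$ has dimension $r(d_A+d_B-r)$ (classical; cf.\ \cite{bruns2006determinantal,Weyman_2003}), so $\dim R_{\le r}=r(d_A+d_B-r)-1=N_r$; hence $\Sigma_{\le r}(\mathcal A,\mathbf d)$ has relative dimension $N_r$ over $X$ and $[\Sigma_{\le r}(\mathcal A,\mathbf d)]$ lies in codimension $(n-1)-N_r$ in $SB(\mathcal A)$. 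Capping $N_r$ times with $H\in CH^1(SB(\mathcal A))$ lands in $CH^{n-1}(SB(\mathcal A))$, and proper pushforward along $\pi$ (of relative dimension $n-1$) carries this to $CH^0(X)$. Since $X$ is smooth and connected, hence integral, $CH^0(X)\cong\mathbb Z\cdot[X]$, so $\pi_*\bigl(H^{N_r}\cap[\Sigma_{\le r}(\mathcal A,\mathbf d)]\bigr)=m\,[X]$ for a single integer $m$. (If $\mathcal O_{SB(\mathcal A)}(1)$ is not an honest line bundle, $H$ still makes sense in $CH^1(SB(\mathcal A))_{\mathbb Q}$, e.g.\ as $-\tfrac1n c_1(\omega_{SB(\mathcal A)/X})$, and one runs the argument in $CH^*(-)_{\mathbb Q}$; the answer is visibly integral.)

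Next I would reduce to a fiber. Fix a closed point $x\in X$ and a geometric point $\bar x=\Spec\Omega\to X$ over it; smoothness of $X$ makes $\bar x\hookrightarrow X$ a regular (lci) closed immersion, with refined Gysin map $\bar x^!$. Applying the compatibility of proper pushforward with refined Gysin maps in the Cartesian square with top row $SB(\mathcal A)_{\bar x}\to SB(\mathcal A)$ and bottom row $\bar x\to X$ (standard intersection theory; cf.\ Fulton, \emph{Intersection Theory}, Ch.~6), one gets
\[
\bar x^!\bigl(\pi_*(H^{N_r}\cap[\Sigma_{\le r}(\mathcal A,\mathbf d)])\bigr)
=(\pi_{\bar x})_*\bigl(\bar x^!(H^{N_r}\cap[\Sigma_{\le r}(\mathcal A,\mathbf d)])\bigr).
\]
On the right, $\bar x^!$ commutes with capping by $H$, and $H$ restricts to the hyperplane class $H_{\bar x}=c_1(\mathcal O_{\PP^{n-1}}(1))$ on $SB(\mathcal A)_{\bar x}\cong\PP^{n-1}$; flatness of $\Sigma_{\le r}(\mathcal A,\mathbf d)$ over $X$ (Proposition~\ref{prop:schmidt-descent}, equivalently Theorem~\ref{thm:relative-sing-and-resolution}(iii)) gives $\bar x^![\Sigma_{\le r}(\mathcal A,\mathbf d)]=[(\Sigma_{\le r})_{\bar x}]$. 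So the right-hand side is $(\pi_{\bar x})_*\bigl(H_{\bar x}^{N_r}\cap[(\Sigma_{\le r})_{\bar x}]\bigr)$, i.e.\ the projective degree of the closed subscheme $(\Sigma_{\le r})_{\bar x}\subset\PP^{n-1}_\Omega$. On the left, $\bar x^!(m[X])=m\,[\bar x]\in CH_0(\bar x)=\mathbb Z$, so $m=\deg\bigl((\Sigma_{\le r})_{\bar x}\bigr)$. Finally, since $G_{\mathbf d}$ is smooth, $P_{\mathbf d}$ is trivial over $\bar x$, and $\Sigma_{\le r}(\mathcal A,\mathbf d)=P_{\mathbf d}\times^{G_{\mathbf d}}R_{\le r}$ then identifies $\bigl(SB(\mathcal A)_{\bar x},(\Sigma_{\le r})_{\bar x}\bigr)$ with $\bigl(\PP^{n-1}_\Omega,R_{\le r}\bigr)$ compatibly with hyperplane classes; hence $m=\deg(R_{\le r})$, independent of $x$. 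As $X$ is connected this identifies $\pi_*\bigl(H^{N_r}\cap[\Sigma_{\le r}(\mathcal A,\mathbf d)]\bigr)$ with the constant $\deg(R_{\le r})$, and the fiberwise assertion is exactly the fiber computation just made.

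The routine inputs are the dimension count and the equality $\deg((\Sigma_{\le r})_{\bar x})=\deg(R_{\le r})$, both classical. The delicate step is the base change: one must be sure that proper pushforward genuinely commutes with restriction to a fiber here --- it does, precisely because $\pi$ is proper, $X$ is smooth so $\bar x\hookrightarrow X$ is lci, and $\Sigma_{\le r}(\mathcal A,\mathbf d)$ is $X$-flat --- and that $H$ restricts to the fiber hyperplane class, which is clear from the étale-local model. If one prefers to avoid Chow-theoretic base change, an alternative route uses flatness of $\pi_r$ to see that the Hilbert polynomial of the fiber $(\Sigma_{\le r})_x$ (with respect to $\mathcal O(1)$) is locally constant in $x$, hence constant on the connected $X$, with value read off from the étale-local model $R_{\le r}$; this gives the fiberwise statement directly, and the pushforward class is then determined by evaluating at a single point via the projection formula.
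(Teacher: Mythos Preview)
Your proof is correct. Interestingly, your primary argument and your ``alternative route'' are reversed relative to the paper: the paper's proof runs entirely through the Hilbert-polynomial argument you sketch at the end (flatness of $\Sigma_{\le r}(\mathcal A,\mathbf d)\to X$ forces the Hilbert polynomial with respect to $\mathcal O(1)$ to be locally constant, hence constant on connected $X$; its leading coefficient is the projective degree, which is then read off at one geometric fiber as $\deg(R_{\le r})$), and only at the end asserts that the pushforward class in $CH^0(X)$ is determined by these fiberwise degrees. Your main route via refined Gysin maps and Fulton's base-change compatibility is more intersection-theoretic and makes the $CH^0(X)$ statement rigorous directly, at the cost of invoking heavier machinery; the paper's Hilbert-polynomial route is lighter but leaves that last Chow-theoretic step a little informal. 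Either argument is fine, and together they give a satisfying cross-check.
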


\begin{proof}
The statement is fiberwise and follows from flatness. Indeed, $\Sigma_{\le r}(\mathcal A,\mathbf d)\to X$ is flat with fibers isomorphic (fppf-locally on $X$) to the fixed projective subscheme $R_{\le r}\subset \PP^{n-1}$ equipped with the hyperplane polarization. Hence the Hilbert polynomial of the fibers of $\Sigma_{\le r}(\mathcal A,\mathbf d)\to X$ with respect to $\cO_{SB(\mathcal A)}(1)|_{\Sigma_{\le r}}$ is locally constant, and therefore constant on each connected component of $X$.

By definition, the projective degree of a fiber is the leading coefficient of this Hilbert polynomial (up to the standard factorial normalization), so the fiberwise degree is constant on $X$. On a geometric point $x\to X$ where the subsystem structure trivializes, the fiber $(\Sigma_{\le r})_x\subset SB(\mathcal A)_x\simeq \PP^{n-1}$ identifies with $R_{\le r}$, hence has degree $\deg(R_{\le r})$. Therefore every fiber has degree $\deg(R_{\le r})$.

Finally, the class $\pi_*\!\bigl(H^{N_r}\cap[\Sigma_{\le r}(\mathcal A,\mathbf d)]\bigr)\in CH^0(X)$ is determined by these fiberwise degrees, and equals $\deg(R_{\le r})\cdot [X]$ on each connected component.
\end{proof}

\begin{example}[A twisted $(p,p)$ model with an explicit relative degree]
\label{ex:pp_symbol_tensor_degree}
Work with the setup of Proposition~\ref{prop:pp_symbol_tensor_reducible} on $X=\Gm^2$, so that the degree-$p^2$ Azumaya algebra $A$ admits a global $(p,p)$--subsystem structure. Then the product-state locus $\Sigma_{\le 1}(A,(p,p))\subset SB(A)$ is a flat family whose geometric fibers are the Segre varieties $\PP^{p-1}\times \PP^{p-1}\subset \PP^{p^2-1}$. Hence every fiber has degree
\[
\deg\bigl((\Sigma_{\le 1})_x\bigr)=\deg\bigl(\PP^{p-1}\times \PP^{p-1}\bigr)=\binom{2p-2}{p-1}.
\]
Consequently, with $H=c_1(\cO_{SB(A)}(1))$ one has in $CH^0(X)$
\[
  \pi_*\bigl(H^{2p-2}\cap [\Sigma_{\le 1}(A,(p,p))]\bigr)
  \;=\;\binom{2p-2}{p-1}\cdot [X].
\]
For $p=2$ this recovers the familiar degree-$2$ quadric surface in $\PP^3$.
\end{example}

In the split case one can compute these classes by standard determinantal formulas.

\begin{proposition}[Split computation via Thom--Porteous]
\label{prop:TP-split}
Assume $\mathcal A\cong \underline{\End}(E)$ is split and the subsystem structure comes from a tensor factorization $E\cong E_A\otimes E_B$ with $\mathrm{rk}(E_A)=d_A$, $\mathrm{rk}(E_B)=d_B$. Then $SB(\mathcal A)\cong \mathbb P(E)$ and $\Sigma_{\le r}(\mathcal A,\mathbf d)$ is the degeneracy locus of the universal map
\[
  \Phi:\ \pi^*(E_A^\vee)\otimes \mathcal O_{\mathbb P(E)}(-1)\longrightarrow \pi^*(E_B),
\]
and its class in $CH^*(\mathbb P(E))$ is given by the Thom--Porteous formula in terms of the Chern classes of the virtual bundle $\pi^*(E_B)-\pi^*(E_A^\vee)\otimes \mathcal O_{\mathbb P(E)}(-1)$.
\end{proposition}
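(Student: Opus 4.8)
The plan is to reduce the statement to the classical Thom--Porteous theorem, after identifying $\Sigma_{\le r}(\mathcal A,\mathbf d)$ with the $r$-th degeneracy locus of an explicit morphism of vector bundles on $\PP(E)$. First I would record the split dictionary. For $\mathcal A=\underline{\End}(E)$ one has $SB(\mathcal A)\cong\PP(E)$ in the sub-line convention used in the introduction: a reduced rank-$1$ right ideal is $I_\ell=\{\varphi:\im\varphi\subset\ell\}$ for a line subbundle $\ell\subset E$, so the universal such $\ell$ is the tautological inclusion $\cO_{\PP(E)}(-1)\hookrightarrow\pi^*E$. The chosen subsystem structure is the $G_{\mathbf d}$--reduction induced by the tensor factorization $E\cong E_A\otimes E_B$; thus $P_{\mathbf d}$ is built from the product of the frame bundles of $E_A$ and $E_B$, and by Definition~\ref{def:relative-schmidt} the locus $\Sigma_{\le r}(\mathcal A,\mathbf d)$ is, over any open $U\subset X$ trivializing $E_A$ and $E_B$, the constant family $R_{\le r}\times U\subset\PP(V_A\otimes V_B)\times U$.

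Next I would build the morphism $\Phi$ and identify $\Sigma_{\le r}(\mathcal A,\mathbf d)$ with its degeneracy locus. The tautological inclusion $\cO_{\PP(E)}(-1)\hookrightarrow \pi^*E_A\otimes\pi^*E_B$ corresponds, under the adjunction $\Hom(\cO_{\PP(E)}(-1),\pi^*E_A\otimes\pi^*E_B)\cong\Hom(\pi^*E_A^\vee\otimes\cO_{\PP(E)}(-1),\pi^*E_B)$, to a canonical morphism
\[
  \Phi:\ \pi^*E_A^\vee\otimes\cO_{\PP(E)}(-1)\ \longrightarrow\ \pi^*E_B .
\]
Over a point $[\ell]\in\PP(E)_x$ with $\ell=\langle v\rangle$, $v\in(E_A)_x\otimes(E_B)_x$, the fibre $\Phi_{[\ell]}$ is, up to the invertible twist by $\ell$, the flattening $\xi\mapsto(\xi\otimes\id)(v)$, whose rank equals the matrix (Schmidt) rank of $v$. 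Hence the degeneracy scheme $D_r(\Phi)$ --- cut out by the $(r+1)\times(r+1)$ minors of a local matrix of $\Phi$ --- coincides, in any tensor-adapted trivialization, with the determinantal scheme $R_{\le r}\times U$, i.e.\ with $\Sigma_{\le r}(\mathcal A,\mathbf d)|_U$; determinantal ideals of generic matrices being prime, both sides are reduced, and the local identifications glue to an equality $\Sigma_{\le r}(\mathcal A,\mathbf d)=D_r(\Phi)$ of closed subschemes of $\PP(E)$.

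Then I would verify the codimension hypothesis and invoke Thom--Porteous. Classically $R_{\le r}$ is irreducible of pure codimension $(d_A-r)(d_B-r)$ in $\PP(V_A\otimes V_B)$ (equivalently $\dim R_{\le r}=r(d_A+d_B-r)-1$; the ideal of maximal minors is perfect), so $\Sigma_{\le r}(\mathcal A,\mathbf d)=D_r(\Phi)$ has everywhere the expected codimension $(d_A-r)(d_B-r)$ inside the smooth scheme $\PP(E)$, and it is reduced and pure-dimensional (irreducible when $X$ is). This is exactly the hypothesis under which the Thom--Porteous formula outputs the fundamental class: with source $F:=\pi^*E_A^\vee\otimes\cO_{\PP(E)}(-1)$ of rank $d_A$ and target $G:=\pi^*E_B$ of rank $d_B$,
\[
  [\Sigma_{\le r}(\mathcal A,\mathbf d)]
  \;=\;\Delta^{(d_B-r)}_{(d_A-r)}\bigl(c(G-F)\bigr)
  \;=\;\det\bigl(c_{d_B-r+j-i}(G-F)\bigr)_{1\le i,j\le d_A-r}
  \ \in\ CH^{(d_A-r)(d_B-r)}(\PP(E)),
\]
where $c(G-F)=c(\pi^*E_B)\cdot c\bigl(\pi^*E_A^\vee\otimes\cO_{\PP(E)}(-1)\bigr)^{-1}$ is the total Chern class of the virtual bundle $\pi^*(E_B)-\pi^*(E_A^\vee)\otimes\cO_{\PP(E)}(-1)$. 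This is the asserted formula; in examples it is evaluated from $c(E_A),c(E_B)$ together with the projective-bundle presentation of $CH^*(\PP(E))$ and the relation for $H=c_1(\cO_{\PP(E)}(1))$.

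I expect the only genuine obstacle to be bookkeeping rather than mathematics: making the conventions mesh so that the abstract twisted object $P_{\mathbf d}\times^{G_{\mathbf d}}R_{\le r}$ is \emph{literally} the minor-scheme $D_r(\Phi)$ --- in particular, that the $G_{\mathbf d}$--adapted local trivializations used to define $\Sigma_{\le r}(\mathcal A,\mathbf d)$ are precisely those induced by the factorization $E\cong E_A\otimes E_B$ in the sub-line convention on $\PP(E)$, so that ``Schmidt rank of a tensor'' becomes ``rank of the flattening $\Phi$'' on the nose. Once this is pinned down, the expected-codimension property of determinantal loci is classical \cite{bruns2006determinantal,Weyman_2003} and the Thom--Porteous formula is a black box, so no further intersection-theoretic computation is needed for the class itself. (One should merely note that if $X$ is disconnected $\Sigma_{\le r}$ need not be irreducible, but it remains equidimensional of the expected codimension, which is all Thom--Porteous requires.)
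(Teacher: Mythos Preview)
Your proposal is correct and follows essentially the same approach as the paper's proof: identify a point of $\PP(E)$ with a tensor, contract to get the flattening map $E_A^\vee\to E_B$ whose rank is the Schmidt rank, globalize via the tautological line to obtain $\Phi$, and then invoke Thom--Porteous. You are considerably more careful than the paper (spelling out the adjunction, checking scheme-theoretic equality and the expected-codimension hypothesis, and writing the determinantal expression), but the underlying argument is the same.
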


\begin{proof}
A point $[\psi]\in \mathbb P(E_x)=\mathbb P(E_{A,x}\otimes E_{B,x})$ corresponds to a tensor $\psi$, hence a linear map $E_{A,x}^\vee\to E_{B,x}$ by contraction. The rank of this map is exactly the Schmidt rank of $\psi$. Globalizing over $\mathbb P(E)$ uses the tautological line subbundle $\mathcal O(-1)\subset \pi^*E$ and yields $\Phi$. The degeneracy locus $\{\mathrm{rank}(\Phi)\le r\}$ is precisely $\Sigma_{\le r}$, and Thom--Porteous gives the class.
\end{proof}

\begin{remark}
Even in the split case, the determinantal strata depend on the \emph{chosen} tensor factorization $E\cong E_A\otimes E_B$.
There is no canonical way to extract such a factorization from $E$ or from $\underline{\End}(E)$. Thus the strata and their classes are attached to the chosen subsystem structure, not to $E$ or $\underline{\End}(E)$ alone.
\end{remark}

\section{Examples and obstructions beyond the Brauer class}
\label{sec:examples}
We collect examples illustrating quantum entanglement geometry. Further physical examples appear in\cite{Ikeda:2026fen}. 

\begin{summary*}[Outline of the examples]\
\begin{enumerate}[label=\textup{(\roman*)}]
\item \textbf{Brauer-theoretic obstruction (torsion).}
Proposition~\ref{prop:lcm-period-obstruction} gives the general $\mathrm{lcm}(\mathbf d)$--torsion constraint, and Theorem~\ref{thm:kummer-not-in-Brpp} exhibits a Kummer $p^2$--symbol class on $X=\Gm^2$ that cannot admit a global $(p,p)$ subsystem structure.

\item \textbf{Twisted but reducible case.}
Proposition~\ref{prop:pp_symbol_tensor_reducible} constructs a nontrivial twisted degree-$p^2$ class on the same base
that does globalize a $(p,p)$ subsystem structure (hence supports the full entanglement filtration). Example~\ref{ex:pp_symbol_tensor_degree} gives an explicit numerical invariant of its product-state locus.

\item \textbf{Torsion is not sufficient.}
Theorem~\ref{thm:indec_exp_p_ind_p2_not_Brpp} shows that even exponent-$p$ classes can fail to lie in $\Br_{(p,p)}$:
the obstruction depends on the isomorphism class of the torsor (index/indecomposability phenomena).

\item \textbf{Not a Brauer invariant (even split).}
Proposition~\ref{prop:not-brauer-invariant} and Theorem~\ref{thm:P1-bipartite-reducibility}
show that $\mathbf d$--reducibility depends on the underlying $\PGL_n$--torsor, not merely on the Brauer class. Example~\ref{ex:P1-split-different} gives an explicit split illustration on $\PP^1$.

\item \textbf{Monodromy and its quantum information interpretation}
The analytic monodromy picture in \S\ref{sec:entangling_monodromy} translates the failure of reduction into monodromy of local trivializations. In particular, Theorem~\ref{thm:p2-monodromy-CNOT} identifies the $p=2$ monodromy with a $\mathrm{CNOT}$-type operation. Example~\ref{ex:8-level} gives an eight-level example combining these phenomena.

\item \textbf{Quantum spin chain.} Appendix~\ref{sec:spin-chain-toy} constructs a spin chain example on a torus in which locally product ground states become entangled after gluing.
\end{enumerate}
\end{summary*}

\subsection{Obstructions: general constraints and explicit symbol examples}
\subsubsection{A general period obstruction: an $\mathrm{lcm}(\mathbf d)$--torsion constraint}
\label{subsec:lcm-period-obstruction}

\begin{proposition}[A general period obstruction for $\mathbf d$--decomposable Brauer classes]
\label{prop:lcm-period-obstruction}
Fix a subsystem type $\mathbf d=(d_1,\dots,d_s)$ and set $n=\prod_i d_i$ and
\[
  \ell:=\operatorname{lcm}(d_1,\dots,d_s).
\]
Let $G_{\mathbf d}\subset \PGL_n$ be the \emph{stabilizer} (Definition~\ref{def:segrestab}), possibly disconnected when some $d_i$ coincide, and let $\Br_{\mathbf d}(X)$ be as in Definition~\ref{def:br-d}. Then for every scheme $X$ one has
\[
  \Br_{\mathbf d}(X)\ \subset\ \Br(X)[\ell].
\]
In particular, if $\beta\in \Br_{\mathbf d}(X)$, then $\per(\beta)\mid \ell$.
\end{proposition}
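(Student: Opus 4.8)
The plan is to reduce the claim to a purely group-theoretic statement about the central extension $1\to\mathbb{G}_m\to\GL_n\to\PGL_n\to1$ restricted along $G_{\mathbf d}\hookrightarrow\PGL_n$, and then to write down an explicit character that forces $\ell$-torsion. First, unwinding Definition~\ref{def:br-d}, a class $\beta\in\Br_{\mathbf d}(X)$ has the form $\beta=\delta([P])$ where $[P]\in H^1(X,\PGL_n)$ lifts to a $G_{\mathbf d}$-torsor $P_{\mathbf d}$. Writing $\widetilde{G}_{\mathbf d}\subset\GL_n$ for the preimage of $G_{\mathbf d}$, the sequence $1\to\mathbb{G}_m\to\widetilde{G}_{\mathbf d}\to G_{\mathbf d}\to1$ is the restriction of the tautological extension, and naturality of connecting maps gives $\beta=\delta_{G_{\mathbf d}}([P_{\mathbf d}])$ for the associated $\delta_{G_{\mathbf d}}\colon H^1(X,G_{\mathbf d})\to H^2_{\mathrm{fppf}}(X,\mathbb{G}_m)$. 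So it is enough to exhibit a character $\chi\colon\widetilde{G}_{\mathbf d}\to\mathbb{G}_m$ restricting to $z\mapsto z^{\ell}$ on the central $\mathbb{G}_m$: given such $\chi$, the subgroup $\widehat{G}:=\ker\chi$ sits in a central extension $1\to\mu_\ell\to\widehat{G}\to G_{\mathbf d}\to1$ with $\widetilde{G}_{\mathbf d}\cong\widehat{G}\times^{\mu_\ell}\mathbb{G}_m$, so $\delta_{G_{\mathbf d}}$ factors through $H^2_{\mathrm{fppf}}(X,\mu_\ell)\to H^2_{\mathrm{fppf}}(X,\mathbb{G}_m)$, and the latter is killed by $\ell$ since $\mu_\ell\hookrightarrow\mathbb{G}_m\xrightarrow{z\mapsto z^{\ell}}\mathbb{G}_m$ is trivial; hence $\beta\in\Br(X)[\ell]$.

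To produce $\chi$ I would work component by component. On the identity component $G_{\mathbf d}^{\circ}\cong\prod_i\PGL_{d_i}$ the preimage is $\widetilde{G}_{\mathbf d}^{\circ}\cong\bigl(\prod_i\GL_{d_i}\bigr)/\{(\lambda_1,\dots,\lambda_s):\prod_i\lambda_i=1\}$, and the character $\prod_i(\det)^{\ell/d_i}$ of $\prod_i\GL_{d_i}$ descends to $\widetilde{G}_{\mathbf d}^{\circ}$ exactly because $d_i\cdot(\ell/d_i)=\ell$ is independent of $i$ (so it is trivial on the displayed kernel), and it restricts to $z\mapsto z^{\ell}$ on the central $\mathbb{G}_m$. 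To extend it across the component group $\Pi$, which permutes the equal-dimensional factors, I would realize this character as the action on the canonical line
\[
L\;:=\;\bigotimes_i\Bigl(\bigwedge\nolimits^{d_i}k^{d_i}\Bigr)^{\otimes \ell/d_i}\;\subset\;(k^n)^{\otimes\ell},
\]
the inclusion being the obvious rearrangement of tensor factors. Since $\ell/d_i$ depends only on $d_i$, the line $L$ is manifestly stable under the $\Pi$-action permuting equal tensor slots (a permutation of the tensor factors of a one-dimensional space is the identity), hence stable under all of $\widetilde{G}_{\mathbf d}\subset\GL_n$ acting via the $\ell$-th tensor power of its standard representation; this gives $\chi\colon\widetilde{G}_{\mathbf d}\to\GL(L)=\mathbb{G}_m$, acting on a scalar $zI_n$ by $z^{\ell}$, as required.

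Assembling the pieces: the character $\chi$ gives $\beta\in\Br(X)[\ell]$ by the reduction step, and ``$\per(\beta)\mid\ell$'' is then immediate from the definition of the period. The step I expect to be the main obstacle is dealing with the possible disconnectedness of $G_{\mathbf d}$ when some $d_i$ coincide: the naive route of decomposing a $G_{\mathbf d}$-torsor into $\PGL_{d_i}$-torsors and applying ``period divides degree'' componentwise only succeeds after base change to the finite étale cover trivializing $\Pi$, and corestriction along that cover would weaken the conclusion to $\per(\beta)\mid\ell\cdot|\Pi|$ --- already insufficient for $\mathbf d=(2,2)$. The real content is therefore the observation that the character on $G_{\mathbf d}^{\circ}$ extends canonically over $\Pi$, which is precisely what the determinant-line description of $\chi$ delivers; granting that, the rest is routine manipulation of connecting maps in fppf cohomology.
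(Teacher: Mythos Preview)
Your argument is correct and follows the same overall strategy as the paper: both produce a central extension $1\to\mu_\ell\to(\,\cdot\,)\to G_{\mathbf d}\to 1$ mapping compatibly to $1\to\Gm\to\GL_n\to\PGL_n\to 1$, and then invoke functoriality of boundary maps to conclude that $\delta$ factors through $H^2(X,\mu_\ell)$.

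The implementations differ. The paper builds the $\mu_\ell$-lift \emph{from below}, as the subgroup of $\GL_n$ generated by the image of $\prod_i\SL_{d_i}$ together with explicit permutation-matrix lifts of $\Pi$, and then checks by hand that the resulting kernel is exactly $\mu_\ell$. You instead take the full preimage of $G_{\mathbf d}$ in $\GL_n$ and cut it down \emph{from above} via the character $\chi=\prod_i(\det)^{\ell/d_i}$, realized geometrically as the action on the determinant line $L\subset (k^n)^{\otimes\ell}$. Your route makes it transparent \emph{why} the bound is $\ell=\operatorname{lcm}(d_i)$ (it is the smallest exponent for which the weighted product of determinants is constant on the kernel of the Kronecker map), and the determinant-line realization handles the component group $\Pi$ in one stroke rather than by a separate case analysis. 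The paper's route is more hands-on but requires verifying that a nontrivial permutation cannot be scalar and that the scalars arising from $\prod_i\mu_{d_i}$ generate exactly $\mu_\ell$. Both arguments are short; yours is arguably cleaner on the disconnected case, which you correctly flag as the only real subtlety.
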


\begin{proof}
Let $G_{\mathbf d}\subset \PGL_n$ be the stabilizer. Define $\widetilde{G}_{\mathbf d}$ to be the subgroup fppf sheaf of $\GL_n$ generated by
\begin{enumerate}[label=(\alph*)]
\item the image of the tensor-product homomorphism $\prod_{i=1}^s \SL_{d_i}\to \SL_n\subset \GL_n$, and
\item the permutation-matrix lifts of all permutations of equal Segre factors acting on
$k^{d_1}\otimes\cdots\otimes k^{d_s}$.
\end{enumerate}
By construction, the composite morphism of fppf sheaves
\[
  \widetilde{G}_{\mathbf d}\hookrightarrow \GL_n \twoheadrightarrow \PGL_n
\]
has image $G_{\mathbf d}$. In particular, $\widetilde{G}_{\mathbf d}\to G_{\mathbf d}$ is surjective as an fppf sheaf.

\medskip
\noindent\textbf{Claim.} The kernel of $\widetilde{G}_{\mathbf d}\to G_{\mathbf d}$ is the scalar subgroup $\mu_\ell\subset \GL_n$, where $\ell=\mathrm{lcm}(d_1,\dots,d_s)$.

Indeed, the kernel consists of those elements mapping to the identity in $\PGL_n$, hence of scalar matrices in $\GL_n$. Moreover, if an element of $\widetilde{G}_{\mathbf d}$ maps to a scalar matrix in $\GL_n$, then its image in the (permutation) component group is necessarily trivial (a nontrivial permutation matrix cannot be scalar), so such a scalar element must lie in the tensor-product part. The only scalars in the image of $\prod_i \SL_{d_i}\to \GL_n$ are tensor products of central elements $\zeta_i\cdot I_{d_i}$ with $\zeta_i\in \mu_{d_i}$. Such an element maps to $(\prod_i \zeta_i)\cdot I_n$. Since $d_i\mid \ell$ for all $i$, these scalars lie in $\mu_\ell$, and the subgroup they generate is exactly $\mu_\ell$. This proves the claim.

\medskip
Thus we obtain a central extension of fppf sheaves of groups
\[
  1\longrightarrow \mu_\ell \longrightarrow \widetilde{G}_{\mathbf d}
  \longrightarrow G_{\mathbf d}\longrightarrow 1,
\]
and a commutative diagram of central extensions
\[
\begin{tikzcd}
1 \arrow[r] & \mu_\ell \arrow[r] \arrow[d,hook] &
\widetilde{G}_{\mathbf d} \arrow[r] \arrow[d,hook] &
G_{\mathbf d} \arrow[r] \arrow[d,hook,"i"] & 1\\
1 \arrow[r] & \Gm \arrow[r] &
\GL_n \arrow[r] &
\PGL_n \arrow[r] & 1.
\end{tikzcd}
\]

Let $[P_{\mathbf d}]\in H^1(X,G_{\mathbf d})$ and set $[P]:=i_*[P_{\mathbf d}]\in H^1(X,\PGL_n)$. Since $\mu_\ell$ is central, the boundary map for the top extension lands in the abelian group
\[
  \partial_\ell: H^1(X,G_{\mathbf d})\longrightarrow H^2(X,\mu_\ell).
\]
By functoriality of boundary maps for morphisms of extensions, we have
\[
  \delta_n([P]) \;=\; (\mu_\ell\hookrightarrow \Gm)_*\bigl(\partial_\ell([P_{\mathbf d}])\bigr)
  \ \in\ H^2(X,\Gm),
\]
where $\delta_n:H^1(X,\PGL_n)\to H^2(X,\Gm)$ is the usual connecting morphism. 

Finally, the image of $H^2(X,\mu_\ell)\to H^2(X,\Gm)$ is $\ell$--torsion (because $\mu_\ell\subset \Gm$ is killed by $\ell$), so $\ell\cdot \delta_n([P])=0$. Hence $\delta_n([P])\in \Br(X)[\ell]$, proving $\Br_{\mathbf d}(X)\subset \Br(X)[\ell]$.
\end{proof}

\subsubsection{Period/index obstruction via a Kummer symbol (generic period $m$)}
\label{subsec:kummer-obstruction}

Fix an integer $m\ge 2$ and assume $\mu_m\subset k$ (so\ $k$ contains a primitive $m$-th root of unity $\zeta_m$). (Throughout the paper $\mathrm{char}(k)=0$, so $m$ is automatically invertible in $k$.) Let
\[
  X := \mathbb G_{m,k}^2 = \Spec k[u^{\pm 1},v^{\pm 1}]
\]
which is smooth, integral, and regular.

\begin{definition}[A symbol Azumaya algebra of degree $m$]
\label{def:symbol-azumaya}
Define the sheaf of $\mathcal O_X$--algebras $\mathcal A_{(u,v),m}$ on $X$ by generators and relations: it is locally free of rank $m^2$ as an $\mathcal O_X$--module, generated by $x,y$ with relations
\[
  x^m=u,\qquad y^m=v,\qquad yx=\zeta_m\, xy,
\]
and with the obvious $\mathcal O_X$--linearity.
\end{definition}
For the relationship between symbol constructions and Brauer classes via the norm-residue homomorphism (and its consequences for central simple/Azumaya algebras), see
\cite{1983IzMat..21..307M}.

\begin{proposition}[Azumaya and Brauer class]
\label{prop:symbol-azumaya}
$\mathcal A_{(u,v),m}$ is an Azumaya algebra of degree $m$ on $X$. Its Brauer class $\beta_{(u,v),m}:=[\mathcal A_{(u,v),m}]\in \Br(X)$ is $m$--torsion.
\end{proposition}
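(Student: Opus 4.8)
The plan is to recognize $\mathcal A:=\mathcal A_{(u,v),m}$ as a cyclic algebra and then apply the classical structure theory of such algebras. Since $X$ is affine it suffices to work with the global sections $R:=k[u^{\pm1},v^{\pm1}]$, a regular domain. Put $L:=R[x]/(x^m-u)$. Because $\mathrm{char}(k)=0$ and $u\in R^\times$, the discriminant of $x^m-u$ is (up to sign) $m^m u^{m-1}\in R^\times$, so $L/R$ is finite \'etale of rank $m$; since $\mu_m\subset k$, the assignment $\sigma\colon x\mapsto\zeta_m x$ is a well-defined $R$-automorphism of order $m$ and generates $\operatorname{Aut}_R(L)\cong\Z/m$, so $\Spec L\to X$ is a $\Z/m$-Galois (equivalently $\mu_m$-) torsor. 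The three defining relations say precisely that $\mathcal A=\bigoplus_{j=0}^{m-1}L\,y^j$ with $y^m=v$ and $y\lambda=\sigma(\lambda)\,y$ for $\lambda\in L$; that is, $\mathcal A\cong(L/R,\sigma,v)$, the cyclic $R$--algebra. In particular $\mathcal A\cong L^{\oplus m}\cong R^{\oplus m^2}$ is locally free of rank $m^2$ with basis $\{x^iy^j\}_{0\le i,j<m}$. The one point that genuinely needs care is that the presentation does not collapse: the cyclic-algebra module $\bigoplus_j L y^j$ equipped with the twisted product is an honest associative unital $R$--algebra in which all three relations hold, hence it is a quotient of $\mathcal A$ that is already free of rank $m^2$ on the images of $x^iy^j$, which forces these monomials to be an $R$--basis of $\mathcal A$.

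For the Azumaya property I would exhibit an fppf splitting. Base-changing to $\Spec L$, the standard isomorphism
\[
(L/R,\sigma,v)\otimes_R L\ \xrightarrow{\ \sim\ }\ \End_L(L\otimes_R L)\ \cong\ M_m(L),
\]
sending $\lambda\otimes1$ to multiplication in the left factor and $y$ to the $\sigma$--semilinear shift twisted by $v$ (using $L\otimes_R L\cong L^{\,m}$ from the Galois condition), shows that $\mathcal A$ becomes a matrix algebra over the finite \'etale cover $\Spec L\to X$. As $\mathcal A$ is finite locally free of constant rank $m^2$ and fppf-locally isomorphic to $M_m$, it is an Azumaya algebra of degree $m$. (Equivalently one may argue fiberwise: at any point $p\in X$ the images of $u,v$ are units in $\kappa(p)\supseteq k$, so $\mathcal A\otimes\kappa(p)$ is the symbol algebra over $\kappa(p)$, central simple of dimension $m^2$; a finite locally free algebra of constant rank $m^2$ whose geometric fibers are all central simple is Azumaya.)

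For $m$--torsion of $\beta_{(u,v),m}=[\mathcal A]$, I would invoke the multiplicativity $[(L/R,\sigma,b_1)]+[(L/R,\sigma,b_2)]=[(L/R,\sigma,b_1b_2)]$ in $\Br(X)$ for cyclic algebras attached to a fixed $\Z/m$--Galois cover, which gives $m\cdot\beta_{(u,v),m}=[(L/R,\sigma,v^m)]$. Now $v^m=N_{L/R}(v)$, since $N_{L/R}(v)=\prod_{i=0}^{m-1}\sigma^i(v)=v^m$ for $v\in R^\times\subseteq L^\times$, and a cyclic algebra whose second parameter is a norm from $L$ is split by the classical norm criterion; hence $m\cdot\beta_{(u,v),m}=0$. (More structurally, $\beta_{(u,v),m}$ is the cup product of the Kummer classes $(u),(v)\in H^1_{\mathrm{fppf}}(X,\mu_m)$ under $\mu_m^{\otimes2}\xrightarrow{\zeta_m}\mu_m$, so it lies in the image of $H^2_{\mathrm{fppf}}(X,\mu_m)\to H^2_{\mathrm{fppf}}(X,\Gm)$, which is killed by $m$; and in any case, once $\mathcal A$ is known to be Azumaya of degree $m$, ``period divides degree'' already forces the conclusion.) The main obstacle is organizational rather than conceptual: one must pin down the cyclic-algebra identifications cleanly in the relative ($\mathcal O_X$--algebra) setting — especially the non-collapse of the presentation and the explicit splitting over $L$ — since all the needed facts are classical for symbol algebras but deserve either a careful verification or a precise citation.
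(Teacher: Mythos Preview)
Your proof is correct and follows essentially the same approach as the paper: both identify $\mathcal A_{(u,v),m}$ as a cyclic (symbol) algebra and split it over the Kummer cover adjoining an $m$-th root of $u$, then deduce $m$-torsion from standard facts (the paper invokes the crossed-product structure; you give the norm criterion, the cup-product description, and period-divides-degree as alternatives). The paper's proof is a two-line sketch deferring to the standard theory, whereas you have carefully unpacked the details (étaleness of $L/R$, non-collapse of the presentation, the explicit splitting isomorphism).
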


\begin{proof}
This is the standard Kummer/cyclic (symbol) construction. Over the Kummer cover adjoining an $m$--th root of $u$ (and similarly for $v$), the algebra becomes a matrix algebra. Therefore, it is a crossed product algebra associated to the $\mu_m$--torsors defined by $u$ and $v$. Hence it is an Azumaya algebra over $X$, and its Brauer class is annihilated by $m$.
\end{proof}

\begin{definition}[Period and index]
\label{def:period-index}
Let $X$ be integral with function field $K=k(X)$ and let $\beta\in \Br(X)$.
\begin{itemize}
\item The period $\mathrm{per}(\beta)$ is the order of $\beta$ in $\Br(X)$.
\item The index $\mathrm{ind}(\beta)$ is the index of the class $\beta_K\in Br(K)$ in the Brauer group of the field $K$ (the minimal degree of a central simple algebra
representing $\beta_K$).
\end{itemize}
One always has $\mathrm{per}(\beta)\mid \mathrm{ind}(\beta)$.
\end{definition}

\begin{lemma}[Generic symbol has period $m$]
\label{lem:symbol-period-m}
Let $K=k(u,v)$ be the rational function field. The symbol class $\beta_{(u,v),m}|_{\Spec K}\in \Br(K)$ has period $m$. Consequently, $\per(\beta_{(u,v),m})=m$ in $\Br(X)$, hence also $\ind(\beta_{(u,v),m})=m$.
\end{lemma}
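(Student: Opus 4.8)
The plan is to compute a ramification invariant of $\beta_{(u,v),m}$ and read off that it already has order $m$. First I would reduce to the generic point: since $X=\Gm^2$ is integral (and in fact regular), it suffices to show that the restriction $\beta_K:=\beta_{(u,v),m}|_{\Spec K}$ to $K=k(u,v)$ has period exactly $m$, because $\per\bigl(\beta_{(u,v),m}\bigr)$ on $X$ is then squeezed between $\per(\beta_K)=m$ and the upper bound $\per\bigl(\beta_{(u,v),m}\bigr)\mid m$ coming from Proposition~\ref{prop:symbol-azumaya}. So the whole content is the lower bound $m\mid\per(\beta_K)$.

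To obtain this lower bound I would ramify along $u=0$. View $K=k(v)(u)$ as the function field of $\mathbb{P}^1$ over $k(v)$, and let $w$ be the discrete valuation ``order of vanishing in $u$'', with valuation ring $k(v)[u]_{(u)}$, uniformizer $u$, and residue field $\kappa_w=k(v)$. Because $\mu_m\subset k\subset\kappa_w$, the Brauer residue restricts on $m$--torsion to a homomorphism $\partial_w:\Br(K)[m]\to \kappa_w^\times/(\kappa_w^\times)^m$. The algebra $\mathcal{A}_{(u,v),m}\otimes_{\mathcal O_X}K$ is the classical symbol algebra $(u,v)_{\zeta_m}$, i.e.\ the Galois symbol $\{u\}\cup\{v\}$; applying the tame symbol formula $\partial_w(\{\pi\}\cup\{a\})=\bar a$ (valid for a uniformizer $\pi$ and a $w$--unit $a$; here $\pi=u$, $a=v$) gives, up to a sign that does not affect the order,
\[
  \partial_w(\beta_K)=\overline{v}\ \in\ k(v)^\times/(k(v)^\times)^m .
\]

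It then remains to see that $\overline{v}$ has order exactly $m$, which is immediate: $v$ is a prime of the UFD $k[v]$, so $v^{j}$ is an $m$--th power in $k(v)^\times$ if and only if $m\mid j$. Hence $\partial_w(\beta_K)$ has order $m$, so $m\mid\per(\beta_K)$, and with the upper bound this forces $\per(\beta_K)=m$; therefore $\per\bigl(\beta_{(u,v),m}\bigr)=m$ on $X$. For the index, $(u,v)_{\zeta_m}$ is a central simple $K$--algebra of degree $m$ representing $\beta_K$, so $\ind\bigl(\beta_{(u,v),m}\bigr)=\ind(\beta_K)$ divides $m$; combined with the general inequality $\per\mid\ind$ and $\per=m$, this yields $\ind\bigl(\beta_{(u,v),m}\bigr)=m$.

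The step most prone to error --- rather than genuine difficulty --- is the residue computation: one must be careful that $w$ is a divisorial valuation of the function field $K$ whose centre lies \emph{outside} $X=\Gm^2$ (it is used purely as a tool on $K$), and invoke the standard identifications (Kummer symbol algebra as cup product of Kummer classes, tame symbol formula with the correct normalization); all of this is classical (e.g.\ Serre's local class field theory, or Gille--Szamuely). The only arithmetic input genuinely needed about the pair $(u,v)$ is that $v$ is nontrivial modulo $m$--th powers in $k(v)^\times$, which is precisely the \emph{generic} (algebraic-independence) property of $u,v$. A fully elementary variant would instead fix a prime $p\mid m$, note that $\per(\beta_K)=m$ is equivalent to $(u,v)_{\zeta_m}^{\otimes m/p}\sim (u,v)_{\zeta_p}$ being non-split for every such $p$, and prove non-splitness by the same $u=0$ residue (now valued in $k(v)^\times/(k(v)^\times)^p$) or by a specialization argument; the cohomological residue is preferable only because it disposes of all primes $p\mid m$ simultaneously.
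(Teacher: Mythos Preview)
Your proposal is correct and follows essentially the same approach as the paper: both compute the residue of the symbol class $(u,v)_m$ at the divisorial valuation associated to $u=0$, identify it with the class of $v$ in $k(v)^\times/(k(v)^\times)^m$, and use that this class has exact order $m$. Your squeeze argument for passing from $\per(\beta_K)$ to $\per$ on $X$ is a minor variant of the paper's use of the injection $\Br(X)\hookrightarrow\Br(K)$ for regular $X$, and your index argument is identical.
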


\begin{proof}
Over $K$ the class is the usual symbol algebra $(u,v)_m$. Consider the discrete valuation $v_u$ corresponding to the prime divisor $u=0$ on $\Spec k[u,v]$. For $m$ invertible in $k$ and $\mu_m\subset k$, the residue map for $m$--torsion Brauer classes sends the symbol $(u,v)_m$ to the Kummer class of $v$ in the residue field $k(v)$. Since $v$ is not an $m$-th power in $k(v)$, this residue has exact order $m$. Hence $(u,v)_m$ has exact order $m$ in $\Br(K)$.

Since $X$ is regular, the natural map $\Br(X)\hookrightarrow \Br(K)$ is injective, so $\per(\beta_{(u,v),m})=m$ in $\Br(X)$ as well. Finally, for a degree-$m$ central simple algebra one has $\ind\mid m$, while $\per=m$ forces $\ind=m$.
\end{proof}

\subsubsection{Obstruction for the bipartite type $(p,p)$}
We now compare this class to the bipartite subsystem type $\mathbf d=(p,p)$ (so $n=p^2=m$). From this point on we specialize to $m=p^2$ (with $p$ prime) in order to discuss the bipartite subsystem type $(p,p)$.

\begin{corollary}[A necessary torsion constraint for $\mathbf d=(p,p)$]
\label{cor:pp-torsion-constraint}
Let $p$ be a prime and let $X$ be any scheme. If $\beta\in \Br_{(p,p)}(X)$, then $\per(\beta)\mid p$.
\end{corollary}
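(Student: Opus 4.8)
The plan is to deduce this immediately from the general period obstruction already proved. Observe that for the bipartite type $\mathbf d=(p,p)$ one has $\operatorname{lcm}(p,p)=p$, so $\ell=p$ in the notation of Proposition~\ref{prop:lcm-period-obstruction}. Applying that proposition with $\mathbf d=(p,p)$ gives
\[
  \Br_{(p,p)}(X)\ \subset\ \Br(X)[p]
\]
for every scheme $X$, which is exactly the assertion that $\per(\beta)\mid p$ for every $\beta\in\Br_{(p,p)}(X)$.

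Concretely, I would write: since $p$ is prime, $\mathrm{lcm}(p,p)=p$; now invoke Proposition~\ref{prop:lcm-period-obstruction} with $s=2$, $d_1=d_2=p$, $n=p^2$, $\ell=p$. The proposition yields $\Br_{(p,p)}(X)\subset\Br(X)[p]$, and unwinding the definition of the $\ell$-torsion subgroup, this says precisely that any $\beta\in\Br_{(p,p)}(X)$ satisfies $p\cdot\beta=0$, i.e.\ $\per(\beta)\mid p$. No further argument is needed.

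There is essentially no obstacle here: the corollary is a direct specialization of Proposition~\ref{prop:lcm-period-obstruction}, and the only thing to note is the trivial arithmetic fact $\operatorname{lcm}(p,p)=p$. If one wanted to be slightly more self-contained, one could additionally remark that the stabilizer $G_{(p,p)}$ may be disconnected (the factor-swap $\mathbb Z/2$), but this is already accounted for in the statement and proof of Proposition~\ref{prop:lcm-period-obstruction} via the permutation-matrix lifts, so it requires no separate treatment. The proof is therefore a one-line appeal to the general result.
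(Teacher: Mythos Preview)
Your proof is correct and matches the paper's own argument exactly: the paper's proof is the single sentence ``This is Proposition~\ref{prop:lcm-period-obstruction} with $\ell=\mathrm{lcm}(p,p)=p$.'' Your additional remarks about the disconnected stabilizer are accurate but unnecessary, as you yourself note.
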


\begin{proof}
This is Proposition~\ref{prop:lcm-period-obstruction} with $\ell=\mathrm{lcm}(p,p)=p$.
\end{proof}

\begin{theorem}[A Kummer symbol class not in $\Br_{(p,p)}(X)$]
\label{thm:kummer-not-in-Brpp}
With $X=\mathbb G_m^2$ and $\beta=\beta_{(u,v),p^2}$ as above, one has
\[
  \beta \notin \Br_{(p,p)}(X).
\]
\end{theorem}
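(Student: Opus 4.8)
The plan is to deduce the non-membership directly from the period obstruction of Proposition~\ref{prop:lcm-period-obstruction}, specialized to the bipartite type $\mathbf d=(p,p)$, combined with the period computation for the generic symbol already recorded in Lemma~\ref{lem:symbol-period-m}. The key observation is that $\Br_{(p,p)}(X)$ is a condition on the Brauer \emph{class}, and the constraint $\Br_{(p,p)}(X)\subset \Br(X)[p]$ of Corollary~\ref{cor:pp-torsion-constraint} (this is Proposition~\ref{prop:lcm-period-obstruction} with $\ell=\mathrm{lcm}(p,p)=p$) holds for \emph{every} scheme $X$; so there is nothing to check about the particular representative $\mathcal A_{(u,v),p^2}$, only about $\per(\beta)$.

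First I would record the torsion constraint: if $\beta\in \Br_{(p,p)}(X)$, then $\per(\beta)\mid p$ by Corollary~\ref{cor:pp-torsion-constraint}. Second, I would invoke the period computation: by Lemma~\ref{lem:symbol-period-m} applied with $m=p^2$, the symbol class has period exactly $p^2$ over the function field $K=k(u,v)$, and since $X=\Gm^2$ is regular the restriction $\Br(X)\hookrightarrow \Br(K)$ is injective, so $\per(\beta_{(u,v),p^2})=p^2$ in $\Br(X)$ as well. Combining the two facts, membership $\beta\in \Br_{(p,p)}(X)$ would force $p^2=\per(\beta)\mid p$, which is impossible for any prime $p\ge 2$. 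Hence $\beta\notin \Br_{(p,p)}(X)$.

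The only point requiring genuine input — and it is already supplied by Lemma~\ref{lem:symbol-period-m}, not by anything new in this theorem — is verifying that the symbol $(u,v)_{p^2}$ really has period $p^2$ rather than a proper divisor. That is the residue-map argument at the prime divisor $\{u=0\}$: the residue of $(u,v)_{p^2}$ is the class of $v$ in $k(v)^\times/(k(v)^\times)^{p^2}$, which has order exactly $p^2$ because $v$ is not a $p$-th power in $k(v)$. So there is essentially no obstacle here; the content of the theorem is the immediate numerical contradiction $p^2\nmid p$ once the general period obstruction and the generic-period lemma are in place. I would present the proof in exactly this two-line form, citing Corollary~\ref{cor:pp-torsion-constraint} and Lemma~\ref{lem:symbol-period-m}.
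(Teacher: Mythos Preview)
Your proposal is correct and follows exactly the same approach as the paper's proof: combine the period constraint $\Br_{(p,p)}(X)\subset \Br(X)[p]$ from Corollary~\ref{cor:pp-torsion-constraint} with the computation $\per(\beta)=p^2$ from Lemma~\ref{lem:symbol-period-m} to obtain the contradiction $p^2\mid p$. The additional remarks you make about regularity of $X$ and the residue argument are already part of the cited lemma, so your write-up is simply a slightly more expanded version of the paper's two-line proof.
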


\begin{proof}
By Lemma~\ref{lem:symbol-period-m}, $\mathrm{per}(\beta)=p^2$. But Corollary~\ref{cor:pp-torsion-constraint} shows that any class in $\Br_{(p,p)}(X)$ must have period dividing $p$. Hence $\beta\notin \Br_{(p,p)}(X)$.
\end{proof}

\begin{remark}[Interpretation of the example]
This example gives a geometric obstruction to a subsystem structure. The fibration $SB(\mathcal A_{(u,v),p^2})\to \mathbb G_m^2$ exists, but no bipartite $(p,p)$ Segre subfibration exists for this Brauer class.
\end{remark}

\subsection{\label{sec:entangling_monodromy}An analytic model with entangling monodromy}
The results of \S\ref{sec:azumaya-subsystems}--\S\ref{sec:numerics} are algebro--geometric and do not require analytic topology. In this subsection we specialize to $k=\mathbb C$ and interpret local trivializations of the $\PGL_n$--torsor on $X^{\mathrm{an}}$ in terms of analytic continuation along loops. The resulting monodromy elements in $\PGL_n(\mathbb C)$ translate the failure of reduction to $G_{\mathbf d}$ into monodromy of local trivializations.

\medskip

Fix a prime $p$ and set $m=p^{2}$. Put
\[
\mathcal{X} = (\mathbb{C}^{\times})^{2}, \qquad (u,v)\in \mathcal{X},
\]
and let
\[
\zeta := e^{2\pi i/m}.
\]
We describe an explicit local model of the symbol Azumaya algebra and show that a gauge monodromy becomes \emph{entangling} with respect to the bipartite type $(p,p)$.

\vskip0.2cm
We first define a local (analytic) trivialization and a parameter-dependent representation. Choose a simply connected open set $U\subset \mathcal{X}$ on which we can pick analytic branches of the $m$-th roots $u^{1/m}$ and $v^{1/m}$. Fix the Hilbert space
\[
\mathcal H := \mathbb{C}^{m},
\]
with computational basis $\{|r\rangle\}_{r\in \mathbb{Z}/m\mathbb{Z}}$. Define the Weyl (shift/clock) operators on $\mathcal H$ by
\begin{equation}
\label{eq:gate}
\mathbf{X}|r\rangle = |r+1\rangle,\qquad 
\mathbf{Z}|r\rangle = \zeta^{\,r}|r\rangle.
\end{equation}
Then $\mathbf{Z}\mathbf{X}=\zeta\,\mathbf{X}\mathbf{Z}$. For $(u,v)\in U$, define
\[
x(u,v):=u^{1/m}\mathbf{X},\qquad y(u,v):=v^{1/m}\mathbf{Z}.
\]
A direct computation gives
\[
x(u,v)^{m}=u\,\mathrm{id}_{\mathcal H},\qquad y(u,v)^{m}=v\,\mathrm{id}_{\mathcal H},\qquad 
y(u,v)\,x(u,v)=\zeta\,x(u,v)\,y(u,v),
\]
corresponding to the setup defined by Definition~\ref{def:symbol-azumaya}. Thus, on $U$, the generators $x,y$ of the symbol algebra are realized as the $(u,v)$-dependent operators $x(u,v),y(u,v)\in \mathrm{End}(\mathcal H)$.

\vskip0.2cm
Let $(u_{0},v_{0})\in U$ be a base point and consider the loops
\[
\gamma_{u}(t)=(u_{0}e^{2\pi it},\,v_{0}),\qquad 
\gamma_{v}(t)=(u_{0},\,v_{0}e^{2\pi it})\qquad (0\le t\le 1).
\]
Analytic continuation of the chosen branches yields
\[
u^{1/m}\longmapsto \zeta\,u^{1/m}\quad \text{along }\gamma_{u},\qquad
v^{1/m}\longmapsto \zeta\,v^{1/m}\quad \text{along }\gamma_{v}.
\]
Hence $x(u,v)$ picks up a factor $\zeta$ along $\gamma_{u}$, and $y(u,v)$ picks up a factor $\zeta$ along $\gamma_{v}$. These phase changes are absorbed by conjugation:
\[
y\,x\,y^{-1}=\zeta\,x,\qquad x^{-1}y\,x=\zeta\,y,
\]
so the corresponding projective monodromies can be taken as
\[
g_{u}=[\,y(u_{0},v_{0})\,]\in \PGL(\mathcal H),\qquad 
g_{v}=[\,x(u_{0},v_{0})^{-1}\,]\in \PGL(\mathcal H).
\]
Here $[\cdot]$ denotes the class in $\PGL(\mathcal H)=\GL(\mathcal H)/\mathbb{C}^{\times}$.

Moreover, the commutator of lifts in $\GL(\mathcal H)$ is a scalar:
\[
g_{u}g_{v}g_{u}^{-1}g_{v}^{-1}
\ \sim\ 
(\mathbf{Z})(\mathbf{X}^{-1})(\mathbf{Z}^{-1})(\mathbf{X})
=\zeta^{-1}\mathrm{id}_{\mathcal H}.
\]
Thus the monodromies commute in $\PGL(\mathcal H)$ but have a nontrivial scalar discrepancy of order $m$ in $\GL(\mathcal H)$, which is the local manifestation of the Brauer twisting.

\vskip0.2cm
We explicitly construct a local bipartite identification $H\simeq \mathbb{C}^{p}\otimes \mathbb{C}^{p}$. Write each $r\in \mathbb{Z}/m\mathbb{Z}$ uniquely as
\[
r=a+pb,\qquad a,b\in\{0,1,\dots,p-1\}.
\]
This gives a local identification
\[
\mathcal H \ \cong\ \mathcal H_{A}\otimes \mathcal H_{B},\qquad \mathcal H_{A}\cong \mathbb{C}^{p},\ \mathcal H_{B}\cong \mathbb{C}^{p},
\]
by sending $|a\rangle_{A}\otimes |b\rangle_{B}\leftrightarrow |a+pb\rangle$. In this tensor-product picture, the projective monodromy around $\gamma_{v}$ acts as $\mathbf{X}^{-1}$ (up to an overall phase), so
\[
g_{v}\sim \mathbf{X}^{-1}.
\]
Its action on the tensor basis is
\[
\mathbf{X}^{-1}|a,b\rangle =
\begin{cases}
|a-1,\ b\rangle & (a\neq 0),\\[4pt]
|p-1,\ b-1\rangle & (a=0),
\end{cases}
\]
hence the action on the tensor basis depends on $a$, so it cannot be of the form $U_{A}\otimes U_{B}$.

\vskip0.2cm
Now consider the product state
\[
|\psi\rangle := (|0\rangle_{A}+|1\rangle_{A})\otimes |0\rangle_{B}.
\]
Applying $\mathbf{X}^{-1}$ gives
\[
\mathbf{X}^{-1}|\psi\rangle
=
|0\rangle_{A}\otimes |0\rangle_{B}+|p-1\rangle_{A}\otimes |p-1\rangle_{B}.
\]
This state has Schmidt rank $2$ (its coefficient matrix has two independent diagonal entries), hence it is entangled. Therefore the monodromy $g_{v}$ is \emph{entangling} with respect to the local bipartite type $(p,p)$. Thus $g_{v}$ does not lie in the stabilizer (the local-operation group) $\PGL_{p}\times \PGL_{p}$.

\begin{remark}[The case $p=2$]
When $p=2$, the above computation gives
\[
(|0\rangle+|1\rangle)\otimes |0\rangle \ \longmapsto\ |00\rangle+|11\rangle,
\]
which is (up to normalization) the Bell state. Thus the gauge monodromy generated by the $(u,v)$-twist contains an entangling component.
\end{remark}

\begin{theorem}[Universality of $p=2$ monodromy gate]
\label{thm:p2-monodromy-CNOT}
Let $\mathcal{X}:=(\mathbb{C}^\times)^2$ with coordinates $(u,v)$, set $m=4$ and $\zeta=e^{2\pi i/m}=i$, and let $\mathbf{X},\mathbf{Z}\in \GL(\mathcal H)$ ($\mathcal H=\mathbb{C}^4$) be the Weyl operators as before. Then:
\begin{enumerate}
\item[(i)] $g_u$ is local: in $\PGL(\mathcal H_A\otimes \mathcal H_B)$ one has $\mathbf{Z}\equiv S_A\otimes Z_B$, where $S_A=\mathrm{diag}(1,i)$ and $Z_B=\mathrm{diag}(1,-1)$.
\item[(ii)] $g_v$ is entangling and CNOT-class: in $\PGL(\mathcal H_A\otimes \mathcal H_B)$,
\[
\mathbf{X}^{-1}\equiv \mathrm{CNOT}_{A, B}\,(X_A\otimes I_B)
\]
(projective equality), hence $g_v$ is locally equivalent to $\mathrm{CNOT}$ (Controlled-NOT).
\item[(iii)] Consequently, if one can implement a universal single-qubit gate set on each factor and the monodromy gate $g_v$, then these gates generate a dense subgroup of $\PU(\mathcal H_A\otimes \mathcal H_B)\cong \PU(4)$.
\end{enumerate}
\end{theorem}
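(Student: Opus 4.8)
The plan is to establish (i) and (ii) by a direct computation in the computational basis of $\mathcal H=\mathbb{C}^4$, using the bipartite identification $|a+2b\rangle\leftrightarrow|a\rangle_A\otimes|b\rangle_B$ with $a,b\in\{0,1\}$ fixed above, and then to deduce (iii) from (ii) together with the standard universality theorem for a single-qubit gate set plus $\mathrm{CNOT}$. For (i): writing $r=a+2b$ and using $\zeta=i$, one has $\mathbf{Z}\,|a+2b\rangle=\zeta^{a+2b}|a+2b\rangle=i^{a}(-1)^{b}\,|a+2b\rangle$ because $i^{2b}=(-1)^b$. Hence under the bipartite identification $\mathbf{Z}$ multiplies $|a\rangle_A\otimes|b\rangle_B$ by $i^{a}(-1)^{b}$, i.e.\ $\mathbf{Z}=S_A\otimes Z_B$ with $S_A=\mathrm{diag}(1,i)$ and $Z_B=\mathrm{diag}(1,-1)$ — an equality of operators, so a fortiori the asserted projective equality holds and $g_u=[\mathbf{Z}]$ is local.

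For (ii): I would compute $\mathbf{X}^{-1}|r\rangle=|r-1\bmod 4\rangle$ on each of the four basis vectors, rewrite the output through $r=a+2b$, and compare with the basis action of $\mathrm{CNOT}_{A,B}\,(X_A\otimes I_B)$ (control on the $A$-factor, target on the $B$-factor, with $X_A\colon|a\rangle_A\mapsto|1-a\rangle_A$); the two operators agree on all four basis vectors, so $\mathbf{X}^{-1}=\mathrm{CNOT}_{A,B}\,(X_A\otimes I_B)$ as operators, hence also projectively. Since $X_A\otimes I_B$ is a local gate, $g_v=[\mathbf{X}^{-1}]$ is locally equivalent to $\mathrm{CNOT}$; it is entangling because $\mathbf{X}^{-1}$ sends the product state $(|0\rangle_A+|1\rangle_A)\otimes|0\rangle_B$ to $|0\rangle_A\otimes|0\rangle_B+|1\rangle_A\otimes|1\rangle_B$, which has Schmidt rank $2$ — this is the general-$p$ computation preceding the theorem, specialized to $p=2$.

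For (iii): by (ii), inside $\PU(\mathcal H_A\otimes\mathcal H_B)$ the subgroup generated by a universal single-qubit gate set on each factor together with $g_v$ coincides with the subgroup generated by those single-qubit gates together with $[\mathrm{CNOT}_{A,B}]$, since the two generating sets differ only by the local element $[X_A\otimes I_B]$, which already lies in the group generated by the single-qubit gates. The latter subgroup is dense in $\PU(4)$ by the standard universality theorem for the gate set consisting of single-qubit gates and $\mathrm{CNOT}$ (indeed all of $\PU(4)$ if one allows arbitrary single-qubit unitaries, via the two-qubit $\mathrm{KAK}$/Cartan decomposition). This yields (iii).

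I do not expect a genuine obstacle here: parts (i) and (ii) are finite basis verifications, and the only points needing care are the modular bookkeeping in the index map $r\mapsto(a,b)$, checking that the relevant identities are \emph{exact} so that no residual scalar spoils the projective statements, and invoking the appropriate (discrete versus continuous) form of the universality input in (iii). The conceptual content — that the $(u,v)$-twist forces an entangling monodromy and that this monodromy is $\mathrm{CNOT}$-equivalent — is already present in the computation preceding the theorem; the statement merely packages it.
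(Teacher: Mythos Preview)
Your proposal is correct and follows essentially the same route as the paper's proof: the paper also computes $\mathbf{Z}|a,b\rangle=i^{a+2b}|a,b\rangle=i^a(-1)^b|a,b\rangle$ for (i), appeals to a direct basis check for the identity $\mathbf{X}^{-1}=\mathrm{CNOT}_{A,B}(X_A\otimes I_B)$ for (ii), and invokes the standard universality of $\mathrm{CNOT}$ plus single-qubit gates for (iii). Your write-up is in fact slightly more explicit than the paper's (you spell out the basis verification and the observation that $[X_A\otimes I_B]$ already lies in the single-qubit subgroup), but there is no substantive difference in approach.
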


\begin{proof}
Along $\gamma_u$ the branch change $u^{1/4}\mapsto \zeta\,u^{1/4}$ is absorbed projectively by conjugation using $\mathbf{ZXZ}^{-1}=\zeta \mathbf{X}$, giving $g_u=[\mathbf{Z}]$. Similarly, along $\gamma_v$, one gets $g_v=[\mathbf{X}^{-1}]$. Under $|a,b\rangle\leftrightarrow |a+2b\rangle$,
\[
\mathbf{Z}|a,b\rangle=i^{a+2b}|a,b\rangle=i^a(-1)^b|a,b\rangle,
\]
so $\mathbf{Z}\equiv S_A\otimes Z_B$ in $\PGL$. A direct basis check shows $\mathbf{X}^{-1}$ agrees with $\mathrm{CNOT}_{A,B}(X_A\otimes I_B)$ in $\PGL$, hence $g_v$ is locally equivalent to $\mathrm{CNOT}$. Finally, it is standard that a gate set consisting of $\mathrm{CNOT}$ and universal single-qubit gates is universal on two qubits (up to phase), and local equivalence is implemented by single-qubit gates, proving (iii).
\end{proof}

\begin{proposition}[Monodromy and failure of reduction to $G_{\mathbf d}$]
\label{prop:entangling-monodromy-vs-reduction}
Let $P\to X$ be a $\PGL_n$--torsor and fix $\mathbf d$ with stabilizer $G_{\mathbf d}\subset \PGL_n$. Suppose $U\to X$ is an fppf cover on which $P$ is trivial, so that the associated bundle $SB(\mathcal A)|_U\simeq \PP^{n-1}\times U$ carries the standard Segre subscheme $\Sigma_{\mathbf d}\times U$.

Then $P$ admits a reduction to $G_{\mathbf d}$ if and only if, after refining the cover and changing the local trivializations if necessary, the transition functions can be chosen with values in $G_{\mathbf d}$.
Thus, for a fixed trivialization, a transition element $g\in \PGL_n$ is ``non-entangling'' (for type $\mathbf d$) precisely when $g(\Sigma_{\mathbf d})=\Sigma_{\mathbf d}$.
In particular, if some transition element lies outside $G_{\mathbf d}$, then it sends some point of $\Sigma_{\mathbf d}$ outside $\Sigma_{\mathbf d}$.
\end{proposition}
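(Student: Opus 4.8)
The plan is to split the statement into its three pieces: the claimed equivalence, which is a cocycle rephrasing of results already established; the ``non-entangling'' reformulation, which is nothing more than the definition of $G_{\mathbf d}$; and the concluding pointwise assertion, which needs only a short dimension argument.

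For the equivalence I would argue in both directions. If, after possibly refining the cover $U$ and re-choosing trivializations of $P$, the resulting transition cocycle $\{g_{ij}\}$ takes values in $G_{\mathbf d}\subset \PGL_n$, then $\{g_{ij}\}$ is a $G_{\mathbf d}$-valued $1$-cocycle and so classifies a principal $G_{\mathbf d}$-torsor $P_{\mathbf d}\to X$ together with a canonical isomorphism $P_{\mathbf d}\times^{G_{\mathbf d}}\PGL_n\cong P$, i.e.\ a $\mathbf d$-subsystem structure in the sense of Definition~\ref{def:subsystem-structure}. Conversely, given a $G_{\mathbf d}$-reduction $P_{\mathbf d}$, Proposition~\ref{prop:segre-descends} produces the descended closed immersion $\iota_{\mathbf d}\colon \Sigma_{\mathbf d}(\mathcal A)\hookrightarrow SB(\mathcal A)$ that is fppf-locally the standard Segre embedding; after refining $U$ so that it also trivializes $P_{\mathbf d}$ (legitimate since $P_{\mathbf d}$ is fppf-locally trivial) and passing to the induced trivializations of $SB(\mathcal A)$, each piece carries $\Sigma_{\mathbf d}(\mathcal A)$ to $\Sigma_{\mathbf d}\times U_i$, so comparing the two trivializations on an overlap forces the transition map to preserve $\Sigma_{\mathbf d}$, hence to land in $G_{\mathbf d}$ by Definition~\ref{def:segrestab}. (Equivalently, this is just Corollary~\ref{cor:subsystem-structure-iff-segre}, or Theorem~\ref{thm:reduction-criterion}(i), read through a trivializing cover.) I would state the equivalence with the explicit caveat ``after possibly refining $U$ and re-choosing trivializations'', since an unlucky choice of trivialization can push the transition functions out of $G_{\mathbf d}$ even when a reduction is present; making this precise is the one bookkeeping point that genuinely needs care.

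For the ``non-entangling'' reformulation there is nothing to do beyond unwinding Definition~\ref{def:segrestab}: $G_{\mathbf d}=\{g\in\PGL_n\mid g(\Sigma_{\mathbf d})=\Sigma_{\mathbf d}\}$, and $\Sigma_{\mathbf d}\subset\PP^{n-1}$ is precisely the locus of product states of type $\mathbf d$, so ``$g$ preserves product states of type $\mathbf d$'' is literally ``$g\in G_{\mathbf d}$''.

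For the concluding assertion: if $g\in\PGL_n(k)$ lies outside $G_{\mathbf d}$, then $g(\Sigma_{\mathbf d})\neq\Sigma_{\mathbf d}$ as closed subschemes of $\PP^{n-1}$. Since $g$ is an automorphism and $\Sigma_{\mathbf d}$ is integral, $g(\Sigma_{\mathbf d})$ is an integral closed subscheme with $\dim g(\Sigma_{\mathbf d})=\dim\Sigma_{\mathbf d}$; if we had $g(\Sigma_{\mathbf d})\subseteq\Sigma_{\mathbf d}$ then $g(\Sigma_{\mathbf d})=\Sigma_{\mathbf d}$, because a closed subvariety of an irreducible variety with the same dimension is the whole variety, contradicting $g\notin G_{\mathbf d}$. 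Hence $g(\Sigma_{\mathbf d})\not\subseteq\Sigma_{\mathbf d}$, and since $k=\bar k$ there is a $k$-point $x\in\Sigma_{\mathbf d}$ with $g(x)\notin\Sigma_{\mathbf d}$, i.e.\ $g$ carries some product state of type $\mathbf d$ to an entangled state. Overall the proof involves no serious mathematical difficulty; the only delicate points are the cover-refinement caveat above and, secondarily, recording that the ``equal dimension plus containment implies equality'' step is scheme-theoretic because $\Sigma_{\mathbf d}$ is reduced.
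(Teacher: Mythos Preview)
Your proof is correct and follows the same route as the paper, whose entire proof is the single sentence ``This is immediate from the definition of reduction of structure group and of the stabilizer $G_{\mathbf d}$.'' Your expansion is strictly more careful: the caveat that one must allow refinement of $U$ and re-choice of trivializations is a genuine subtlety the paper's formulation glosses over, and your dimension argument for the final pointwise claim (that $g\notin G_{\mathbf d}$ forces $g(\Sigma_{\mathbf d})\not\subseteq\Sigma_{\mathbf d}$, hence some product state is sent outside $\Sigma_{\mathbf d}$) makes explicit a step the paper leaves to the reader.
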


\begin{proof}
This is the usual local descent criterion for reduction of structure group.  Refining the cover and changing local trivializations corresponds to replacing the transition cocycle by a cohomologous one.  Such a cocycle can be chosen with values in $G_{\mathbf d}$ exactly when the torsor has a reduction to $G_{\mathbf d}$.  The final assertion follows from the definition of the stabilizer $G_{\mathbf d}$.
\end{proof}

\begin{remark}[Entangling holonomic gate]
From the perspective of quantum computing, Proposition~\ref{prop:entangling-monodromy-vs-reduction} can be interpreted as follows. Choose an open set $U\subset X$ over which a $\mathbf d$-subsystem structure is trivialized, so that product states form the standard product-state locus $\Sigma_{\mathbf d}\subset \mathbb{P}^{n-1}$ on each fiber. For a loop $\gamma$ based in $U$, let $\rho(\gamma)\in \PGL_n$ denote the associated projective monodromy.
Then:
\[
\rho(\gamma)\in G_{\mathbf d}\quad \Longleftrightarrow\quad \rho(\gamma)(\Sigma_{\mathbf d})=\Sigma_{\mathbf d}.
\]
In particular, if $\rho(\gamma)\notin G_{\mathbf d}$, then $\gamma$ acts as an entangling holonomic gate: there exists a product state $[\psi]\in \Sigma_{\mathbf d}$ such that $\rho(\gamma)[\psi]\notin \Sigma_{\mathbf d}$.
\end{remark}

\begin{remark}[Brauer obstruction and monodromy outside $G_{(2,2)}$]
In the Kummer-symbol setting on $\mathcal X=(\mathbb{C}^\times)^2$, the Brauer class of the degree-$4$ symbol algebra has period $4$, while a global $(2,2)$ subsystem structure would force the class to be $2$-torsion. Consequently no global reduction to $G_{(2,2)}$ exists, and any locally chosen tensor decomposition must acquire nonlocal monodromy around some loop. Theorem~\ref{thm:p2-monodromy-CNOT} makes this mechanism explicit: the $v$-loop produces a monodromy in the $\mathrm{CNOT}$-type, hence an entangling operation relative to the local $(2,2)$ splitting. From the Hilbert-scheme viewpoint, this monodromy is exactly the monodromy of the point in the subsystem-structure locus (Definition \ref{def:segre-hilbert-locus}) rather than a symmetry inside the stabilizer.
\end{remark}

\begin{example}[Twisted $8$-level system on $X=\Gm^2$ with locally multiple factorizations]\label{ex:8-level} 
Let \[ X:=\Gm^2=\Spec \mathbb C[u^{\pm1},v^{\pm1}],\qquad n=8, \] so each fiber of the pure state space is (locally) $\PP^{7}$.

Fix a primitive $8$th root $\zeta:=e^{2\pi i/8}$ and consider the $\cO_X$-algebra \[ A:=A(u,v)_8 :=\cO_X\langle x,y\rangle\Big/\big(x^8=u,\;y^8=v,\;yx=\zeta\,xy\big). \] Let $\pi:SB(A)\to X$ be the associated Severi--Brauer scheme (family of pure state spaces). 

On a simply connected analytic open $U\subset X$ choose branches $u^{1/8},v^{1/8}$. Let $\mathcal H:=\mathbb C^8$ with basis $\{|r\rangle\}_{r\in\Z/8\Z}$ and define $\mathbf{X}$ and $\mathbf{Z}$ as eq.~\eqref{eq:gate}.
Then 
\[ 
x(u,v):=u^{1/8}\mathbf{X},\qquad y(u,v):=v^{1/8}\mathbf{Z} 
\] 
gives a local representation, hence 
\[ 
A|_U\simeq \End(\mathcal H),\qquad SB(A)|_U\simeq \PP(\mathcal H)\times U. 
\] 

On $U$ one may identify $\mathcal H\simeq\mathbb C^8$ with either \[ \text{(A) }\mathcal H\simeq \mathbb C^2\otimes\mathbb C^2\otimes\mathbb C^2 \quad(d_3=(2,2,2)), \] via binary expansion $r=a+2b+4c$ $(a,b,c\in\{0,1\})$, i.e. $|a\rangle_A\otimes|b\rangle_B\otimes|c\rangle_C \leftrightarrow |r\rangle$, or \[ \text{(B) }\mathcal H\simeq \mathbb C^4\otimes\mathbb C^2\quad(d_2=(4,2)), \] via $r=a+4b$ $(a\in\{0,1,2,3\},\,b\in\{0,1\})$, i.e. $|a\rangle_A\otimes|b\rangle_B\leftrightarrow |r\rangle$. 

Let the fundamental loops be \[ \gamma_u(t)=(u_0e^{2\pi it},v_0),\qquad \gamma_v(t)=(u_0,v_0e^{2\pi it}). \] Analytic continuation multiplies $u^{1/8}$ or $v^{1/8}$ by $\zeta$. Compensating by conjugation yields projective monodromies \[ g_u=[\mathbf{Z}]\in \PGL(\mathcal H),\qquad g_v=[\mathbf{X}^{-1}]\in \PGL(\mathcal H). \] 

The element $g_v$ is not local. It sends product states to entangled states.  To see this, we consider the following two cases (a) and (b).

\emph{(a) Under the $3$-qubit factorization $d_3=(2,2,2)$.}\; Take a local product state \[ |\psi\rangle=(|0\rangle_A+|1\rangle_A)\otimes|0\rangle_B\otimes|0\rangle_C \;\;\leftrightarrow\;\; |0\rangle+|1\rangle. \] Since $\mathbf{X}^{-1}|0\rangle=|7\rangle$ and $\mathbf{X}^{-1}|1\rangle=|0\rangle$, \[ g_v|\psi\rangle \sim |7\rangle+|0\rangle \;\;\leftrightarrow\;\;|111\rangle+|000\rangle, \] which is the GHZ state (entangled). 

\emph{(b) Under the bipartite factorization $d_2=(4,2)$.}\; Take \[ |\phi\rangle=(|0\rangle_A+|1\rangle_A)\otimes|0\rangle_B \;\;\leftrightarrow\;\; |0\rangle+|1\rangle. \] Then again $g_v|\phi\rangle\sim |7\rangle+|0\rangle$ corresponds to \[ |0,0\rangle+|3,1\rangle \in \mathbb C^4\otimes\mathbb C^2, \] which has Schmidt rank $\ge 2$, hence is entangled. 

We now turn to the
\textbf{Brauer-period obstruction to global subsystem structures.}\;
Let $\beta:=[A]\in \Br(X)$. By Lemma~\ref{lem:symbol-period-m} we have $\per(\beta)=8$. For a subsystem type $d=(d_1,\dots,d_s)$ set $\ell=\mathrm{lcm}(d_1,\dots,d_s)$. Then \[ d_3=(2,2,2)\Rightarrow \ell=2,\qquad d_2=(4,2)\Rightarrow \ell=4, \] so $\per(\beta)=8\nmid \ell$ in both cases. Hence, by Proposition~\ref{prop:lcm-period-obstruction}, $\beta\notin \Br_{d_3}(X)$ and $\beta\notin \Br_{d_2}(X)$. By Theorem~\ref{thm:reduction-criterion}, the associated $\PGL_{8}$-torsor admits no reduction to $G_{d_3}$ nor $G_{d_2}$. Therefore the subsystem-structure loci $\Hilb^{\Sigma_{d_i}}(SB(A)/X)\simeq P/G_{d_i}$ admit no section $X\to P/G_{d_i}$ for $i=2,3$. 
\end{example}

\subsection{A twisted but $(p,p)$-reducible symbol tensor product on $\Gm^2$}

Throughout this subsection let $p$ be a prime, assume $\mathrm{char}(k)\nmid p$ and $k$ contains a primitive $p$-th root of unity $\zeta_p$.
Set
\[
X=\Gm^2=\Spec k[u^{\pm1},v^{\pm1}].
\]

\begin{definition}\label{def:pp_symbol_tensor}
Fix $a,b\in k^\times$. Let $A_u$ (resp.\ $A_v$) be the sheaf of $\cO_X$-algebras generated by $x_u,y_u$ (resp.\ $x_v,y_v$) with relations
\[
x_u^p=u,\quad y_u^p=a,\quad y_u x_u=\zeta_p x_u y_u,
\qquad
x_v^p=v,\quad y_v^p=b,\quad y_v x_v=\zeta_p x_v y_v.
\]
Define the degree-$p^2$ Azumaya algebra
\[
A := A_u\otimes_{\cO_X} A_v.
\]
\end{definition}

\begin{proposition}\label{prop:pp_symbol_tensor_reducible}
The algebra $A$ is Azumaya of degree $p^2$ on $X$. Its Brauer class $\beta:=[A]\in\Br(X)$ satisfies $\beta\in\Br(X)[p]$. Moreover, \ $A$ admits a global $(p,p)$-subsystem structure. Hence its Brauer class $\beta$ lies in $\Br_{(p,p)}(X)$. 
\end{proposition}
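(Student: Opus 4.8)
The plan is to recognize that $A$ is literally a two--factor tensor product and to convert this structure into a reduction of the associated $\PGL_{p^2}$--torsor. Concretely, I would realize that torsor as the one induced, along the Kronecker (tensor) homomorphism $\bar\tau\colon\PGL_p\times\PGL_p\to\PGL_{p^2}$, from the product of the two degree--$p$ torsors attached to $A_u$ and $A_v$, observe that $\bar\tau$ factors through $G_{(p,p)}$, and then quote Theorem~\ref{thm:reduction-criterion}.

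First I would record the easy assertions. The algebras $A_u$ and $A_v$ are symbol algebras of degree $p$, hence Azumaya of degree $p$ by the argument of Proposition~\ref{prop:symbol-azumaya}; write $P_u,P_v\to X$ for the associated $\PGL_p$--torsors. Since a tensor product of Azumaya $\cO_X$--algebras is again Azumaya with multiplicative degree (using $M_p(k)\otimes_k M_p(k)\cong M_{p^2}(k)$), $A=A_u\otimes_{\cO_X}A_v$ is Azumaya of degree $p^2$. The Brauer class is additive under tensor product, $\beta=[A]=[A_u]+[A_v]$, and each summand has period dividing $p$ (as in Proposition~\ref{prop:symbol-azumaya}), so $\beta\in\Br(X)[p]$; alternatively this torsion bound also follows a posteriori from $\beta\in\Br_{(p,p)}(X)$ together with Proposition~\ref{prop:lcm-period-obstruction}, since $\mathrm{lcm}(p,p)=p$.

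The substantive step is the $(p,p)$--subsystem structure. The Kronecker map $\tau\colon\GL_p\times\GL_p\to\GL_{p^2}$, $(g,h)\mapsto g\otimes h$, descends to a homomorphism $\bar\tau\colon\PGL_p\times\PGL_p\to\PGL_{p^2}$, and $\bar\tau$ lands in $G_{(p,p)}$ because $g\otimes h$ carries a decomposable vector $v\otimes w$ to $(gv)\otimes(hw)$ and hence preserves the Segre locus $\PP^{p-1}\times\PP^{p-1}\subset\PP^{p^2-1}$. On the other hand, the identification $M_p(k)\otimes_k M_p(k)\cong M_{p^2}(k)$ intertwines the conjugation action of $\PGL_p\times\PGL_p$ with conjugation through $\bar\tau$, so the associated--bundle description of Azumaya algebras gives
\[
  A\ \cong\ (P_u\times_X P_v)\times^{\PGL_p\times\PGL_p}M_{p^2}(\cO_X),
\]
the action being through $\bar\tau$; hence the $\PGL_{p^2}$--torsor attached to $A$ is $P\cong (P_u\times_X P_v)\times^{\PGL_p\times\PGL_p}\PGL_{p^2}$, i.e.\ it is induced along $\bar\tau$. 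Setting $P_{(p,p)}:=(P_u\times_X P_v)\times^{\PGL_p\times\PGL_p}G_{(p,p)}$ (using that $\bar\tau$ factors through $G_{(p,p)}$) produces a principal $G_{(p,p)}$--torsor with $P_{(p,p)}\times^{G_{(p,p)}}\PGL_{p^2}\cong P$, which is exactly a $(p,p)$--subsystem structure in the sense of Definition~\ref{def:subsystem-structure}; equivalently, by Corollary~\ref{cor:subsystem-structure-iff-segre}, the fiberwise Segre subscheme glues because all transition functions lie in $\bar\tau(\PGL_p\times\PGL_p)\subset G_{(p,p)}$. With this reduction in hand, Theorem~\ref{thm:reduction-criterion}(ii) gives $\beta=\delta([P])\in\Br_{(p,p)}(X)$, completing the proof.

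The one point that genuinely needs care is the identification of the torsor of a tensor product of Azumaya algebras with the $\bar\tau$--induced torsor of the product of the individual torsors. This is standard, but I would spell it out by choosing a common fppf cover that splits both $A_u$ and $A_v$ simultaneously and comparing $1$--cocycles, since this is precisely the step at which the visible two--factor structure of $A$ is turned into a reduction of structure group to $G_{(p,p)}$; everything else is formal manipulation of associated bundles together with the appeal to Theorem~\ref{thm:reduction-criterion}.
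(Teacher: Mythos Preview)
Your proposal is correct and follows essentially the same route as the paper's proof: both verify Azumaya-ness of $A$ via splitting of the symbol factors, use additivity of the Brauer class to get $p$-torsion, and obtain the $(p,p)$-subsystem structure by noting that the $\PGL_{p^2}$-torsor of $A$ is induced from the $\PGL_p\times\PGL_p$-torsor $P_u\times_X P_v$ along the tensor (Kronecker) homomorphism, whose image lies in $G_{(p,p)}$. Your version is somewhat more explicit about why $\bar\tau$ lands in $G_{(p,p)}$ and about the cocycle verification identifying the torsor of $A_u\otimes A_v$ with the $\bar\tau$-induced torsor, but these are just the details the paper leaves implicit.
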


\begin{proof}
Azumaya-ness is checked after the Kummer cover adjoining $p$-th roots of $u$ and $v$: over $X'=\Spec k[u^{\pm 1/p},v^{\pm 1/p}]$ one has $A_u|_{X'}\simeq \Mat_p(\cO_{X'})$ and $A_v|_{X'}\simeq \Mat_p(\cO_{X'})$, hence $A|_{X'}\simeq \Mat_{p^2}(\cO_{X'})$. Thus $A$ is Azumaya of degree $p^2$.

Since $[A]=[A_u]+[A_v]$ in $\Br(X)$ and each of $[A_u],[A_v]$ is $p$-torsion, we have $\beta\in\Br(X)[p]$.

For reducibility, let $P_u\to X$ and $P_v\to X$ be the $\PGL_p$-torsors associated to $A_u$ and $A_v$. Then $P_u\times_X P_v$ is a $\PGL_p\times\PGL_p$-torsor. Pushing it forward along the tensor representation $\PGL_p\times\PGL_p\to \PGL_{p^2}$ yields the $\PGL_{p^2}$-torsor associated to $A$.
Since the image lies in the stabilizer $G_{(p,p)}\subset \PGL_{p^2}$, this exhibits a $G_{(p,p)}$-reduction of $P(A)$, hence a $(p,p)$-subsystem structure.
\end{proof}

\begin{remark}\label{rem:contrast_kummer}
Compare with the Kummer $p^2$-symbol class on the same base $X$: there one has $\mathrm{per}=p^2$, hence the torsion obstruction rules out
membership in $\Br_{(p,p)}(X)$, although $SB(A)\to X$ exists. Proposition~\ref{prop:pp_symbol_tensor_reducible} shows that on $X=\Gm^2$ there are also
nontrivial twisted classes of degree $p^2$ that do globalize a $(p,p)$-subsystem.
\end{remark}

\subsection{A quantum information system and a \emph{non-entangling} monodromy}

Fix a prime $p$ and put
\[
\mathcal{X} = (\mathbb{C}^{\times})^{2}, \qquad (u,v)\in \mathcal{X},
\]
and let
\[
\zeta := e^{2\pi i/p}.
\]
Fix nonzero constants $a,b\in \mathbb{C}^{\times}$ and choose once and for all $p$-th roots $\tilde{\alpha}:=a^{1/p}$ and $\tilde{\beta}:=b^{1/p}$.
We describe an explicit local model realizing the symbol tensor product algebra of Definition~\ref{def:pp_symbol_tensor} and show that the resulting gauge monodromies are \emph{non-entangling} with respect to the bipartite type $(p,p)$.

\vskip0.2cm
\noindent
\textbf{Step 1: Local trivialization and a parameter-dependent representation.}
Choose a simply connected open set $U\subset \mathcal{X}$ on which we can pick analytic branches of the $p$-th roots $u^{1/p}$ and $v^{1/p}$. Fix Hilbert spaces
\[
\mathcal H_A := \mathbb{C}^{p}, \qquad \mathcal H_B := \mathbb{C}^{p},\qquad \mathcal H := \mathcal H_A\otimes \mathcal H_B,
\]
with computational bases $\{|r\rangle_A\}_{r\in\mathbb{Z}/p\mathbb{Z}}$ and $\{|s\rangle_B\}_{s\in\mathbb{Z}/p\mathbb{Z}}$.

Define Weyl (shift/clock) operators on each factor by
\[
\mathbf{X}_A|r\rangle_A = |r+1\rangle_A,\qquad 
\mathbf{Z}_A|r\rangle_A = \zeta^{\,r}|r\rangle_A,
\]
\[
\mathbf{X}_B|s\rangle_B = |s+1\rangle_B,\qquad 
\mathbf{Z}_B|s\rangle_B = \zeta^{\,s}|s\rangle_B.
\]
Then $\mathbf{Z}_A\mathbf{X}_A=\zeta\,\mathbf{X}_A\mathbf{Z}_A$ and
$\mathbf{Z}_B\mathbf{X}_B=\zeta\,\mathbf{X}_B\mathbf{Z}_B$.

For $(u,v)\in U$, define operators on $\mathcal H$ by
\[
x_u(u,v):=u^{1/p}\,(\mathbf{X}_A\otimes \mathrm{id}_{\mathcal H_B}),\qquad
y_u(u,v):=\tilde{\alpha}\,(\mathbf{Z}_A\otimes \mathrm{id}_{\mathcal H_B}),
\]
\[
x_v(u,v):=v^{1/p}\,(\mathrm{id}_{\mathcal H_A}\otimes \mathbf{X}_B),\qquad
y_v(u,v):=\tilde{\beta}\,(\mathrm{id}_{\mathcal H_A}\otimes \mathbf{Z}_B).
\]
A direct computation gives
\[
x_u(u,v)^{p}=u\,\mathrm{id}_{\mathcal H},\qquad y_u(u,v)^{p}=a\,\mathrm{id}_{\mathcal H},\qquad
y_u(u,v)\,x_u(u,v)=\zeta\,x_u(u,v)\,y_u(u,v),
\]
and similarly
\[
x_v(u,v)^{p}=v\,\mathrm{id}_{\mathcal H},\qquad y_v(u,v)^{p}=b\,\mathrm{id}_{\mathcal H},\qquad
y_v(u,v)\,x_v(u,v)=\zeta\,x_v(u,v)\,y_v(u,v).
\]
Moreover, the $u$-generators commute with the $v$-generators since they act on different tensor factors. Thus, on $U$, the generators of
\[
A = A_u\otimes_{\cO_{\mathcal{X}}} A_v
\]
from Definition~\ref{def:pp_symbol_tensor} are realized as the $(u,v)$-dependent operators $\{x_u,y_u,x_v,y_v\}\subset \End(\mathcal H)$.

\vskip0.2cm
\noindent
\textbf{Step 2: Monodromy along the basic loops.}
Let $(u_0,v_0)\in U$ be a base point and consider the loops
\[
\gamma_{u}(t)=(u_{0}e^{2\pi it},\,v_{0}),\qquad 
\gamma_{v}(t)=(u_{0},\,v_{0}e^{2\pi it})\qquad (0\le t\le 1).
\]
Analytic continuation of the chosen branches yields
\[
u^{1/p}\longmapsto \zeta\,u^{1/p}\quad \text{along }\gamma_{u},\qquad
v^{1/p}\longmapsto \zeta\,v^{1/p}\quad \text{along }\gamma_{v}.
\]
Hence $x_u(u,v)$ picks up a factor $\zeta$ along $\gamma_u$, and $x_v(u,v)$ picks up a factor $\zeta$ along $\gamma_v$. These phase changes are absorbed by conjugation:
\[
y_u\,x_u\,y_u^{-1}=\zeta\,x_u,\qquad y_v\,x_v\,y_v^{-1}=\zeta\,x_v.
\]
Therefore the corresponding projective monodromies can be taken as
\[
g_{u}=[\,y_u(u_0,v_0)\,]\in \PGL(\mathcal H),\qquad
g_{v}=[\,y_v(u_0,v_0)\,]\in \PGL(\mathcal H).
\]
(Overall scalars are physically invisible on pure states, hence we work in $\PGL(\mathcal H)$.) Since $y_u$ and $y_v$ act on different tensor factors, their lifts already commute in $\GL(\mathcal H)$:
\[
g_u g_v g_u^{-1} g_v^{-1}
\ \sim\
(\mathbf{Z}_A\otimes \mathrm{id})(\mathrm{id}\otimes \mathbf{Z}_B)
(\mathbf{Z}_A^{-1}\otimes \mathrm{id})(\mathrm{id}\otimes \mathbf{Z}_B^{-1})
= \mathrm{id}_{\mathcal H}.
\]
Thus these basic monodromies remain local.

\vskip0.2cm
\noindent
\textbf{Step 3: Non-entangling nature with respect to the bipartite type $(p,p)$.}
In the tensor-product picture $\mathcal H=\mathcal H_A\otimes \mathcal H_B$, the monodromies are explicitly local:
\[
g_u \sim \mathbf{Z}_A\otimes \mathrm{id}_{\mathcal H_B},\qquad
g_v \sim \mathrm{id}_{\mathcal H_A}\otimes \mathbf{Z}_B.
\]
Hence $g_u$ and $g_v$ lie in the local-operation subgroup $\PGL(\mathcal H_A)\times \PGL(\mathcal H_B)\subset \PGL(\mathcal H)$, so they preserve the product-state locus and cannot generate entanglement from a product state.

For instance, consider the product state
\[
|\psi\rangle := (|0\rangle_A+|1\rangle_A)\otimes |0\rangle_B.
\]
Applying $g_u$ gives
\[
g_u|\psi\rangle
\ \sim\
(\mathbf{Z}_A\otimes \mathrm{id})\bigl((|0\rangle_A+|1\rangle_A)\otimes |0\rangle_B\bigr)
=
(|0\rangle_A+\zeta\,|1\rangle_A)\otimes |0\rangle_B,
\]
which is still a product state (Schmidt rank $1$). Similarly,
\[
g_v|\psi\rangle \ \sim\ (|0\rangle_A+|1\rangle_A)\otimes (\mathbf{Z}_B|0\rangle_B)
=
(|0\rangle_A+|1\rangle_A)\otimes |0\rangle_B
\]
(up to a phase), again a product state. Thus the gauge monodromy in this model is \emph{non-entangling} with respect to $(p,p)$.

\begin{remark}[The case $p=2$]
When $p=2$, one has $\mathcal H=\mathbb{C}^2\otimes \mathbb{C}^2$ and the monodromies are
\[
g_u \sim \sigma_z\otimes \mathrm{id},\qquad g_v \sim \mathrm{id}\otimes \sigma_z
\]
with Pauli $\sigma_z,\sigma_x$, so they are manifestly local phase flips and (in contrast to the example in \S\ref{sec:entangling_monodromy}) never produce Bell-type entanglement.
\end{remark}

\subsection{Period is not sufficient: indecomposable classes of exponent $p$ and index $p^2$}

\begin{lemma}\label{lem:Brpp_quadratic_decomposable}
Let $F$ be a field and $d=(p,p)$. If $\beta\in \Br_{(p,p)}(F)$, then there exists a separable extension $L/F$ of degree dividing $2$ such that $\beta_L$ is represented by a tensor product of two degree-$p$ central simple $L$-algebras.
\end{lemma}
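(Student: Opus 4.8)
The plan is to unwind Definition~\ref{def:br-d} through the structure of the stabilizer $G_{(p,p)}$. By that definition, $\beta\in\Br_{(p,p)}(F)$ means there is a class $[Q]\in H^1(F,G_{(p,p)})$ whose image $[P]\in H^1(F,\PGL_{p^2})$ under extension of structure group along $i\colon G_{(p,p)}\hookrightarrow\PGL_{p^2}$ satisfies $\delta([P])=\beta$; equivalently, the degree-$p^2$ central simple $F$-algebra corresponding to $P$ has Brauer class $\beta$. By the classical description of $G_{(p,p)}$ recalled after Definition~\ref{def:segrestab} (Westwick \cite{westwick1967transformations}), since the two Segre factors have equal dimension $p$ there is a short exact sequence of $F$-group schemes
\[
1\longrightarrow \PGL_p\times\PGL_p\longrightarrow G_{(p,p)}\xrightarrow{\ \pi\ }\Z/2\longrightarrow 1,
\]
in which $\PGL_p\times\PGL_p=G_{(p,p)}^{\circ}$ maps to $\PGL_{p^2}$ through the tensor-product representation $\GL_p\times\GL_p\to\GL_{p^2}$, and the quotient $\Z/2$ records the swap of the two equal-dimensional factors.

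First I would extract the quadratic algebra. Applying $\pi_*$ to $[Q]$ produces a class $\pi_*[Q]\in H^1(F,\Z/2)$, which classifies separable $F$-algebras of degree dividing $2$: if $\pi_*[Q]$ is trivial, set $L:=F$; otherwise let $L/F$ be the separable quadratic field extension it classifies. In either case $[L:F]\mid 2$, which is the extension asserted by the lemma.

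Next I would pass to $L$ and invoke exactness of nonabelian cohomology. Over $L$ the class $\pi_*\bigl([Q]|_L\bigr)=\pi_*[Q]|_L$ is trivial by the choice of $L$; by exactness (as pointed sets) of the sequence
\[
H^1(L,\PGL_p\times\PGL_p)\longrightarrow H^1(L,G_{(p,p)})\xrightarrow{\ \pi_*\ }H^1(L,\Z/2)
\]
associated to the displayed extension, the restricted torsor $[Q]|_L$ lifts to some $[R]\in H^1(L,\PGL_p\times\PGL_p)$. Such an $R$ is precisely a pair of degree-$p$ central simple $L$-algebras $(B_1,B_2)$, and extension of structure group along the tensor-product map $\PGL_p\times\PGL_p\to\PGL_{p^2}$ carries $(B_1,B_2)$ to $B_1\otimes_L B_2$ (using $\End(V_1)\otimes_L\End(V_2)\cong\End(V_1\otimes_L V_2)$). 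Since extension of structure group commutes with restriction along $F\hookrightarrow L$, the $\PGL_{p^2}$-torsor $[P]|_L$ is the image of $[R]$, so $\beta_L=\delta([P]|_L)$ is represented by $B_1\otimes_L B_2$, a tensor product of two degree-$p$ central simple $L$-algebras — exactly the claim.

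The only step that is not purely formal is the first paragraph: the identification of $G_{(p,p)}$ as an extension of $\Z/2$ (the swap of the two equal factors) by $\PGL_p\times\PGL_p$, together with the fact that its identity component embeds in $\PGL_{p^2}$ via the tensor-product representation. This is the classical Westwick classification of the linear automorphisms preserving a Segre variety, already cited in the excerpt. Everything afterward is a diagram chase with $H^1$ of a nonabelian group extension, the standard dictionary between $\PGL_m$-torsors and degree-$m$ central simple algebras, and the compatibility of $\delta$ and of extension of structure group with base change; none of these presents any real difficulty.
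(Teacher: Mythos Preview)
Your proof is correct and follows essentially the same approach as the paper: both arguments identify the component group $\pi_0(G_{(p,p)})\simeq\mathfrak S_2\cong\Z/2$, pass to its splitting field $L/F$ of degree dividing $2$, and then use that over $L$ the $G_{(p,p)}$--torsor lands in the identity component $\PGL_p\times\PGL_p$, which yields the tensor decomposition via the standard dictionary with central simple algebras. Your version simply makes the nonabelian cohomology exact sequence explicit where the paper leaves it implicit.
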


\begin{proof}
By definition, $\beta$ comes from a $G_{(p,p)}$-reduction of a $\PGL_{p^2}$-torsor. When $d=(p,p)$, the stabilizer has a component group $\pi_0(G_{(p,p)})\simeq \mathfrak{S}_2$, so the reduction determines (and is determined by) a possibly nontrivial $\mathfrak{S}_2$-torsor. After base change to its splitting field $L/F$ (degree $\le 2$), the reduction lands in the identity component, which is the image of $\PGL_p\times\PGL_p$ under the tensor representation. Over $L$, this yields an ordered bipartite reduction, hence a decomposition of the associated degree-$p^2$ central simple algebra as a tensor product of two degree-$p$ algebras.
\end{proof}

\begin{theorem}\label{thm:indec_exp_p_ind_p2_not_Brpp}
Let $p$ be an odd prime. There exists a field $F$ of characteristic $0$ and a Brauer class $\beta\in \Br(F)$ such that
\[
\exp(\beta)=p,\qquad \ind(\beta)=p^2,
\]
and $\beta$ remains indecomposable after any prime-to-$p$ field extension. In particular, $\beta\notin \Br_{(p,p)}(F)$ although $\beta\in \Br(F)[p]$.
\end{theorem}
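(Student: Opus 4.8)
The plan is to split the statement into a purely formal deduction from Lemma~\ref{lem:Brpp_quadratic_decomposable} and an import from the theory of indecomposable division algebras of prime exponent. \emph{The reduction.} Suppose we have produced a characteristic-$0$ field $F\supseteq\mu_p$ and a class $\beta=[D]\in\Br(F)$ with $D$ division, $\exp(\beta)=p$, $\ind(\beta)=p^2$, such that $\beta_{F'}$ is not Brauer-equivalent to a tensor product of two degree-$p$ central division $F'$-algebras for every extension $F'/F$ of degree prime to $p$. Then $\beta\in\Br(F)[p]$ is immediate. If $\beta$ were in $\Br_{(p,p)}(F)$, Lemma~\ref{lem:Brpp_quadratic_decomposable} would give a separable $L/F$ with $[L:F]\mid 2$ over which $\beta_L$ is represented by a tensor product of two degree-$p$ central simple $L$-algebras. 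Since $p$ is odd, $[L:F]\in\{1,2\}$ is prime to $p$, whence $\ind(\beta_L)=p^2$ (from $p^2=\ind(\beta)\mid\ind(\beta_L)\,[L:F]$ together with $\ind(\beta_L)\mid p^2$); so both tensor factors are division of degree exactly $p$ and their product is division of degree $p^2$, i.e.\ $\beta_L$ is genuinely decomposable --- contradicting the standing hypothesis on $F$. Thus the whole content is the construction of $\beta$.

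\emph{Existence of $\beta$.} Here I would invoke the classical constructions of indecomposable division algebras of prime exponent $p$ and index $p^2$ for odd $p$ (Tignol; see also the cycle-theoretic refinements of Karpenko). Concretely one may take $k$ a characteristic-$0$ field with $\mu_p\subset k$ (e.g.\ $k=\mathbb Q(\mu_p)$) and $F$ an appropriate finitely generated extension of $k$ (a purely transcendental or iterated function-field extension) carrying a ``generic'' exponent-$p$ Brauer class $\beta$ of index $p^2$: exponent $p$ is built into the construction, and index $p^2$ is verified by a residue/valuation computation on $F$. The crucial input is that $\beta$ can be arranged to be \emph{indecomposable}, i.e.\ not Brauer-equivalent to $D_1\otimes_F D_2$ with $\deg D_i=p$; in the Chow-theoretic formulation this is certified by the nonvanishing of a $p$-primary torsion class --- for instance in $\mathrm{CH}^2$ of the Severi--Brauer variety of a representing division algebra --- decomposability forcing that class to vanish.

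\emph{Stability under prime-to-$p$ extensions.} This I would deduce from the $p$-primary nature of the above indecomposability obstruction. If $F'/F$ is finite of degree $d$ prime to $p$, then $\mathrm{cor}_{F'/F}\circ\mathrm{res}_{F'/F}$ is multiplication by $d$, which is an automorphism of the $p$-primary torsion group carrying the obstruction; hence the obstruction survives to $F'$ and $\beta_{F'}$ remains indecomposable (and of index $p^2$, as above). For an arbitrary, possibly infinite, prime-to-$p$ extension one reduces to the finite case: Brauer classes, Severi--Brauer varieties and their Chow groups commute with filtered colimits of fields, and any decomposition over $F'$ already descends to a finite sub extension, necessarily of degree prime to $p$. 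Combined with the reduction above, this gives $\beta\notin\Br_{(p,p)}(F)$.

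The genuinely hard step is the middle one: simultaneously arranging $\exp(\beta)=p$, $\ind(\beta)=p^2$ and indecomposability with a $p$-primary certificate is exactly the content of Tignol's (and Karpenko's) work, which I would cite rather than reprove, checking only that the construction can be run over a characteristic-$0$ base and that its indecomposability invariant is $p$-primary torsion so that the corestriction argument applies. The deduction from Lemma~\ref{lem:Brpp_quadratic_decomposable} and the transfer argument are routine; the restriction to odd $p$ is essential, both because degree-$2$ extensions are then automatically prime to $p$ and because index-$p^2$ exponent-$p$ algebras can only be indecomposable for $p$ odd (for $p=2$ a degree-$4$ exponent-$2$ division algebra is always a tensor product of two quaternion algebras).
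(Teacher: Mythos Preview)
Your proof is correct and follows the same logical skeleton as the paper's: import an indecomposable exponent-$p$, index-$p^2$ class stable under prime-to-$p$ extensions, then apply Lemma~\ref{lem:Brpp_quadratic_decomposable} and note that a quadratic extension is prime to $p$ when $p$ is odd. The only substantive difference is the source of the imported class: the paper cites McKinnie's generic abelian crossed products (where prime-to-$p$ stability is part of the statement), whereas you invoke the Tignol/Karpenko line via $p$-torsion in $\mathrm{CH}^2$ of the Severi--Brauer variety together with a restriction--corestriction transfer argument; both are standard and yield what is needed, and your extra care verifying $\ind(\beta_L)=p^2$ and handling infinite extensions by a colimit reduction is sound but not strictly necessary for the paper's purposes.
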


\begin{proof}
The existence of such $\beta$ (indecomposable of exponent $p$ and index $p^2$, and stable under prime-to-$p$ extensions) is provided by the theory of generic abelian crossed products (see \cite[Theorem~3.16]{McKinniePrimeToP2007}). Assuming $\beta\in \Br_{(p,p)}(F)$,  Lemma~\ref{lem:Brpp_quadratic_decomposable} would imply that $\beta$ becomes decomposable after a quadratic extension of $F$, which is a prime-to-$p$ extension. This contradicts the prime-to-$p$ indecomposability.
\end{proof}

\begin{remark}\label{rem:torsion_not_sufficient}
The torsion obstruction $\Br_{(p,p)}(F)\subset \Br(F)[p]$ is therefore sharp but not sufficient: there exist classes of exponent $p$ which cannot arise from any global $(p,p)$-subsystem structure.
\end{remark}

\subsection{A numerical split obstruction on curves (including elliptic curves)}

Let $C$ be a smooth projective curve over an algebraically closed field $k$. Since $\Br(C)=0$, every $\PP^{n-1}$-bundle over $C$ is of the form $\PP(V)$ for some rank-$n$ vector bundle $V$, unique up to twisting by a line bundle.

\begin{definition}\label{def:deg_mod_n_projective_bundle}
For a $\PP^{n-1}$-bundle $P\simeq \PP(V)\to C$, define
\[
\deg(P)\in \mathbb Z/n\mathbb Z,\qquad \deg(P):=\deg(V)\ \mathrm{mod}\ n.
\]
This is well-defined because $\deg(V\otimes L)=\deg(V)+n\deg(L)$.
\end{definition}

\begin{proposition}\label{prop:curve_gcd_degree_obstruction}
Let $d=(d_A,d_B)$ with $n=d_A d_B$ and set $g:=\gcd(d_A,d_B)$. If a $\PP^{n-1}$-bundle $P\to C$ admits a $d$-subsystem structure (equivalently, is $d$-reducible), then $\deg(P)\equiv 0 \ (\mathrm{mod}\ g)$ in $\mathbb Z/g\mathbb Z$.
\end{proposition}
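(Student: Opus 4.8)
The plan is to exploit $\Br(C)=0$ to convert the subsystem structure into an honest tensor factorization of a vector bundle (possibly after a degree-$2$ \'etale cover), and then to read off the congruence from a Chern-class computation. First I would recall that, by Definition~\ref{def:subsystem-structure} and Theorem~\ref{thm:reduction-criterion}(i), a $\mathbf d$--subsystem structure on $\PP(V)=SB(\mathcal A)$ is a reduction of the $\PGL_n$--torsor $P$ to $G_{\mathbf d}$. Since $\pi_0(G_{\mathbf d})$ is trivial when $d_A\neq d_B$ and is $\mathfrak S_2$ (swapping the two Segre factors) when $d_A=d_B$, such a reduction determines an \'etale cover $\pi:\widetilde C\to C$ of degree $1$ or $2$, namely the one trivializing the $\pi_0(G_{\mathbf d})$--torsor attached to the reduction; over $\widetilde C$ the reduction lands in the identity component $G_{\mathbf d}^{\circ}$, which is the image of $\PGL_{d_A}\times\PGL_{d_B}$ under the tensor representation. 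Recall also (Definition~\ref{def:deg_mod_n_projective_bundle}) that the quantity to control is $\deg(P)=\deg(V)\bmod n$, and we must show it reduces to $0$ in $\Z/g\Z$.

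Next I would pass to vector bundles on $\widetilde C$. Since $\widetilde C$ is again a smooth projective curve over an algebraically closed field, $\Br(\widetilde C)=0$, so the $\PGL_{d_A}\times\PGL_{d_B}$--torsor obtained over $\widetilde C$ lifts to a $\GL_{d_A}\times\GL_{d_B}$--torsor, i.e.\ to a pair of vector bundles $W_A$ of rank $d_A$ and $W_B$ of rank $d_B$ on $\widetilde C$. Pushing forward along the tensor representation identifies $\pi^*P$ with $\PP(W_A\otimes W_B)$, hence $\pi^*V\cong W_A\otimes W_B\otimes L'$ for some line bundle $L'$ on $\widetilde C$ (two vector bundles with the same projectivization differ by a line bundle twist). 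Using $\deg(W_A\otimes W_B)=d_B\deg W_A+d_A\deg W_B$ and $\deg(\pi^*V)=\deg(\pi)\cdot\deg V$, this gives
\[
  \deg(\pi)\cdot\deg V \;=\; d_B\deg W_A + d_A\deg W_B + n\deg L',
\]
and the right-hand side is divisible by $g=\gcd(d_A,d_B)$ since $g\mid d_A$, $g\mid d_B$, $g\mid n$. If $\deg(\pi)=1$ we conclude $\deg V\equiv 0\pmod g$ immediately.

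The remaining case $\deg(\pi)=2$ (which forces $d_A=d_B=:d=g$) is the step I expect to be the main obstacle, since the displayed identity alone only yields $2\deg V\equiv 0\pmod d$. The extra ingredient is the swap compatibility: the deck transformation $\sigma$ of the connected double cover $\widetilde C\to C$ exchanges the two $\PGL_d$--factors of the reduction, so $\sigma^*\PP(W_A)\cong\PP(W_B)$, i.e.\ $\sigma^*W_A\cong W_B\otimes M$ for some line bundle $M$ on $\widetilde C$; and because $\sigma$ is an automorphism of a connected smooth projective curve it preserves degrees, so $\deg W_A-\deg W_B=d\deg M$. Substituting into the displayed identity (now with $n=d^2$) yields $2\deg V=2d\deg W_B+d^2(\deg M+\deg L')$, whence $\deg V=d\deg W_B+\tfrac{d^2}{2}(\deg M+\deg L')$. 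If $d$ is even then $\tfrac{d^2}{2}\in d\Z$ and we are done; if $d$ is odd then since $\deg V\in\Z$ and $d\deg W_B\in\Z$ the term $\tfrac{d^2}{2}(\deg M+\deg L')$ must be an integer, forcing $\deg M+\deg L'$ even, and again $\deg V\in d\Z$ (alternatively, for $d$ odd one simply inverts $2$ modulo $d$ in $2\deg V\equiv 0\pmod d$). In all cases $\deg(P)=\deg(V)\bmod n$ maps to $0$ in $\Z/g\Z$, which is the assertion. The only genuinely delicate point is the bookkeeping in this last case, ensuring the factor of $2$ from the cover is absorbed correctly and is not itself an obstruction; the swap relation $\deg W_A\equiv\deg W_B\pmod d$ is exactly what makes this work.
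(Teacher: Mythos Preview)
Your proof is correct and follows the same route as the paper: reduce to an honest tensor factorization $V\simeq (V_A\otimes V_B)\otimes L$ using $\Br(C)=0$, possibly after a degree-$2$ \'etale cover when $d_A=d_B$, and read off the congruence from $\deg(W_A\otimes W_B)=d_B\deg W_A+d_A\deg W_B$. Your treatment of the degree-$2$ cover case via the swap relation $\sigma^*W_A\cong W_B\otimes M$ (forcing $\deg W_A\equiv\deg W_B\pmod d$) is in fact more careful than the paper's, which simply invokes the cover without tracking how the factor of $2$ is absorbed.
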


\begin{proof}
Choose $V$ with $P\simeq \PP(V)$. If $P$ is $d$-reducible, then (for $d_A\neq d_B$, or after an étale cover removing the permutation ambiguity when $d_A=d_B$) one has an isomorphism
\[
V \;\simeq\; (V_A\otimes V_B)\otimes L
\]
for some rank-$d_A$ bundle $V_A$, rank-$d_B$ bundle $V_B$, and a line bundle $L$. Taking degrees gives
\[
\deg(V)\equiv \deg(V_A\otimes V_B) \equiv d_B\deg(V_A)+d_A\deg(V_B)\equiv 0\pmod g,
\]
and twisting by $L$ does not change $\deg(V)\pmod g$ since $n\equiv 0\pmod g$. Thus $\deg(P)\equiv 0\pmod g$.
\end{proof}

\begin{example}\label{ex:elliptic_degree_one_not_reducible}
Let $E$ be an elliptic curve, let $n=p^2$ and $d=(p,p)$ with $p$ prime. Pick a line bundle $M$ of degree $1$ and set $V:=\cO_E^{\oplus (n-1)}\oplus M$. Then $\PP(V)\to E$ has $\deg(\PP(V))\equiv 1\ (\mathrm{mod}\ p)$, hence it is not $(p,p)$-reducible by Proposition~\ref{prop:curve_gcd_degree_obstruction}, although it is split (Brauer class $0$).
\end{example}

\subsection{Azumaya algebras from moduli of vector bundles}

Let $C$ be a smooth projective curve over an algebraically closed field $k$. Fix integers $r\ge 2$ and a line bundle $\mathcal L$ on $C$ of degree $d$.
Let $M:=M_C(r,\mathcal L)$ denote the moduli space of stable vector bundles on $C$ of rank $r$ with fixed determinant $\mathcal L$.

\begin{theorem}[Biswas--Sengupta \cite{2018arXiv180402494B}]\label{thm:BS_Brauer_moduli}
The Brauer group $\Br(M)$ is cyclic of order $\gcd(r,d)$. Moreover, the Brauer class of the universal projective bundle restricted to $\{x\}\times M$ (for any point $x\in C$) generates $\Br(M)$.
\end{theorem}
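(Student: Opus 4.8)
The statement is about the moduli space $M=M_C(r,\mathcal{L})$ of \emph{stable} bundles, which is smooth and quasi-projective (and projective when $\gcd(r,d)=1$), so $\Br(M)$ agrees with the cohomological Brauer group $H^2(M,\Gm)$ by Gabber's theorem. My plan is to present $M$ as a quotient, read off the relevant Brauer class from the tautological family, and compute $\Br(M)$ via the Leray spectral sequence of the presenting $\PGL$-torsor.

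First I would fix $m\gg0$, set $p:=\chi(C,E(m))=rm+d+r(1-g)$ for $(r,d)$-bundles, and let $R^s$ be the open subscheme of the fixed-determinant Quot scheme $\operatorname{Quot}_{\mathcal{L}}(\mathcal{O}_C^{\oplus p})$ parametrizing quotients $\mathcal{O}_C^{\oplus p}\twoheadrightarrow E$ with $E$ stable, $\det E\cong\mathcal{L}$, and the $p$ displayed sections a basis of $H^0(E(m))$. Then $R^s$ is smooth, $\PGL_p$ acts freely (stable bundles are simple), and $\pi\colon R^s\to M$ is an étale-locally trivial principal $\PGL_p$-bundle whose associated $\PP^{p-1}$-bundle has Brauer class $\beta_0:=\delta([\pi])\in\Br(M)$ — equivalently $\beta_0$ is the class of the natural $\Gm$-gerbe $\mathcal{M}^s\to M$ of stable bundles. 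The tautological quotient $\mathcal{U}$ on $C\times R^s$ carries a weight-one $\GL_p$-linearization, hence descends to a $\operatorname{pr}_M^*\beta_0$-twisted sheaf on $C\times M$; restricting to $\{x\}\times M$ gives a $\beta_0$-twisted bundle of rank $r$, and the derived pushforward of $\mathcal{U}(m)$ along $C\times M\to M$ is a $\beta_0$-twisted bundle of rank $p$, so $\beta_0$ is killed by both $r$ and $p$ and therefore by $\gcd(r,p)=\gcd(r,d+r(1-g))=\gcd(r,d)$. The same weight-one twisted bundle $\mathcal{U}|_{\{x\}\times M}$ is the descent of the restriction to $\{x\}\times M$ of the universal projective bundle $\PP(\mathcal{E})$, so the Brauer class of $\PP(\mathcal{E})|_{\{x\}\times M}$ equals $\beta_0$, independently of $x$.

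Next I would run the Leray spectral sequence $E_2^{i,j}=H^i(M,R^j\pi_*\Gm)\Rightarrow H^{i+j}(R^s,\Gm)$. One has $\pi_*\Gm=\Gm$, while $R^1\pi_*\Gm$ is the constant sheaf $\Z/p$ (the relative Picard of the $\PGL_p$-fibres, using $\Pic(\PGL_p)=\Z/p$ with trivial monodromy), so the low-degree exact sequence reads $\Pic(M)\to\Pic(R^s)\to \Z/p\xrightarrow{d_2}\Br(M)\xrightarrow{\pi^*}\Br(R^s)$, with $d_2$ sending the fibrewise generator to $\beta_0$. If $\Br(R^s)=0$, this collapses to $\Br(M)\cong\operatorname{coker}\bigl(\Pic(R^s)\to\Z/p\bigr)$, which is cyclic, generated by $\beta_0$, of order dividing $p$; and unwinding the $\PGL_p$-weights of the natural line bundles on $R^s$ — $\det\mathcal{U}|_{\{x\}\times R^s}$ has weight $r$ and the theta line bundle has weight $d+r(1-g)$ — shows the image contains $\gcd(r,d)\cdot\Z/p\Z$, which, combined with $\beta_0\in\Br(M)[\gcd(r,d)]$ from the previous step, forces the image to be exactly $\gcd(r,d)\cdot\Z/p\Z$ and hence $\Br(M)=\langle\beta_0\rangle\cong\Z/\gcd(r,d)\Z$, with $\beta_0$ the asserted generator. (When $\gcd(r,d)=1$ this recovers the familiar facts that a Poincaré bundle exists and $\Br(M)=0$.)

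The two inputs fed into the spectral sequence are where the real work sits. The first is $\Br(R^s)=0$: since $R^s$ is a dense open, with complement of codimension $\ge2$ for $m\gg0$, inside the smooth rational variety $\operatorname{Quot}_{\mathcal{L}}(\mathcal{O}_C^{\oplus p})$ — for which $H^2$ of the structure sheaf vanishes and $H^3(\,\cdot\,,\Z)$ is torsion-free — the Kummer sequence forces its Brauer group to vanish; the work here is the codimension estimate and the cohomological vanishing on the Quot scheme. The genuinely hard point, however, is the reverse inclusion in the last computation, i.e. that no line bundle on $R^s$ has $\PGL_p$-fibre weight a proper divisor of $\gcd(r,d)$, equivalently $\operatorname{ord}(\beta_0)\ge\gcd(r,d)$. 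This is precisely the classical Narasimhan--Ramanan non-existence theorem for a universal bundle; I would obtain it by restricting to a suitably chosen complete curve in $M$ and deriving an integrality obstruction on its degree (or, alternatively, from $\Pic(M)=\Z$ of Drézet--Narasimhan together with the weight computation above), after which the lower bound meshes with the upper bound $\operatorname{ord}(\beta_0)\mid\gcd(r,d)$ to pin down the exact order and complete the proof.
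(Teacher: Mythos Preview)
The paper does not prove this theorem: it is stated with attribution to Biswas--Sengupta and invoked as a black box in the subsequent corollary, so there is no in-paper argument to compare against.

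Your sketch is the standard route to this result (and is close in spirit to the cited reference and to related computations such as Balaji--Biswas--Gabber--Nagaraj): realize $M$ as a $\PGL_p$--quotient of an open Quot locus $R^s$, identify the gerbe class $\beta_0$ with the Brauer class of the restricted universal projective bundle, bound $\operatorname{ord}(\beta_0)$ from above by $\gcd(r,d)$ via the coprime-rank twisted sheaves $\mathcal U|_{\{x\}\times M}$ and $R\operatorname{pr}_{M*}\mathcal U(m)$, and then extract the exact order from the low-degree Leray sequence once $\Br(R^s)=0$ and the image of $\Pic(R^s)\to\mathbb Z/p$ are known. The two points you flag as ``where the real work sits'' are correctly identified. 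For $\Br(R^s)=0$, your rationality/codimension argument is the right shape but needs $g\ge 2$ for the codimension estimate on the unstable locus to go through (the statement is typically made under this hypothesis). For the lower bound $\operatorname{ord}(\beta_0)\ge\gcd(r,d)$, invoking Dr\'ezet--Narasimhan's $\Pic(M)\cong\mathbb Z$ together with your weight computation is cleaner than the curve-restriction route and is how the argument is usually completed.
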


\begin{corollary}\label{cor:moduli_obstructs_pp}
Let $r=p^2$ with $p$ prime, and assume $\gcd(r,d)=p^2$ (e.g.\ $d\equiv 0\ (\mathrm{mod}\ p^2)$). Let $\alpha\in \Br(M)$ be the generator from Theorem~\ref{thm:BS_Brauer_moduli}. Then $\per(\alpha)=p^2$, hence for the bipartite type $(p,p)$ one has
\[
\alpha\notin \Br_{(p,p)}(M).
\]
Equivalently, although the Severi--Brauer fibration $SB(A_\alpha)\to M$ exists, the $(p,p)$-subsystem structure cannot be globalized on $M$.
\end{corollary}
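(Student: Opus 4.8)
The plan is to combine the Brauer-group computation of Biswas--Sengupta (Theorem~\ref{thm:BS_Brauer_moduli}) with the bipartite period obstruction (Corollary~\ref{cor:pp-torsion-constraint}). First I would feed the hypothesis $\gcd(r,d)=p^2$ into Theorem~\ref{thm:BS_Brauer_moduli}: this identifies $\Br(M)$ with the cyclic group $\mathbb{Z}/p^2\mathbb{Z}$, and since $\alpha$ is by definition a generator (the class of the universal projective bundle restricted to $\{x\}\times M$), its order is $\per(\alpha)=p^2$. In particular $\alpha$ is \emph{not} $p$-torsion, i.e.\ $\alpha\notin \Br(M)[p]$, because an element of order $p^2$ cannot lie in the unique order-$p$ subgroup of $\mathbb{Z}/p^2\mathbb{Z}$ (equivalently $p^2\nmid p$).

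Next I would apply the torsion constraint for $\mathbf{d}=(p,p)$. By Corollary~\ref{cor:pp-torsion-constraint} (which is Proposition~\ref{prop:lcm-period-obstruction} with $\ell=\operatorname{lcm}(p,p)=p$) one has $\Br_{(p,p)}(M)\subset \Br(M)[p]$. Combining this with the previous paragraph gives $\alpha\notin \Br_{(p,p)}(M)$, which is the first assertion; the contrapositive formulation is that $\alpha\in \Br_{(p,p)}(M)$ would force $\per(\alpha)\mid p$, contradicting $\per(\alpha)=p^2$.

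For the ``equivalently'' clause I would unwind the definitions. The class $\alpha$ is, by construction, represented by a degree-$p^2$ Azumaya algebra $A_\alpha$ on $M$ — the one attached to the $\PGL_{p^2}$-torsor underlying the restricted universal $\PP^{p^2-1}$-bundle — so the Severi--Brauer fibration $SB(A_\alpha)\to M$ exists. If $A_\alpha$ admitted a global $(p,p)$-subsystem structure, then by Theorem~\ref{thm:reduction-criterion}(ii) its Brauer class $\alpha$ would lie in $\Br_{(p,p)}(M)$, contradicting the previous step; equivalently, by Corollary~\ref{cor:subsystem-structure-iff-segre} no descended Segre subfibration of type $(p,p)$ exists inside $SB(A_\alpha)$, and by Theorem~\ref{thm:subsystem-moduli-hilbert}(iii) the subsystem-structure locus $P/G_{(p,p)}\to M$ admits no section over $M$. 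In fact the statement $\alpha\notin\Br_{(p,p)}(M)$ is stronger: \emph{no} degree-$p^2$ Azumaya representative of $\alpha$ admits a $(p,p)$-subsystem structure.

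There is no substantial obstacle in this argument; the corollary is essentially a repackaging of cited inputs. The only point requiring a moment's attention is the numerology — one must check that the degree $r=p^2$ of the Biswas--Sengupta projective bundle coincides with $n=\prod_i d_i$ for $\mathbf{d}=(p,p)$, so that $A_\alpha$ is a legitimate input to the $\Br_{(p,p)}$ machinery, and that $\per(\alpha)$ equals the full order of $\Br(M)$ rather than a proper divisor, which holds precisely because Theorem~\ref{thm:BS_Brauer_moduli} asserts $\alpha$ is a \emph{generator}.
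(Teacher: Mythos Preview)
Your proposal is correct and follows essentially the same argument as the paper: deduce $\per(\alpha)=p^2$ from Theorem~\ref{thm:BS_Brauer_moduli}, invoke the torsion constraint $\Br_{(p,p)}(M)\subset \Br(M)[p]$ from Proposition~\ref{prop:lcm-period-obstruction} (equivalently Corollary~\ref{cor:pp-torsion-constraint}), and conclude. Your additional unpacking of the ``equivalently'' clause via Theorem~\ref{thm:reduction-criterion}(ii) and the remark about degree-matching are accurate elaborations, but the paper's own proof is the same two-line core.
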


\begin{proof}
By Theorem~\ref{thm:BS_Brauer_moduli}, $\per(\alpha)=\gcd(r,d)=p^2$. For $d=(p,p)$ one has $\ell=\mathrm{lcm}(p,p)=p$, hence $\Br_{(p,p)}(M)\subset \Br(M)[p]$ by the torsion obstruction. Therefore $\alpha\notin \Br_{(p,p)}(M)$.
\end{proof}

\subsection{Dependence on the torsor and split classification on $\PP^1$}
\subsubsection{Same Brauer class, different subsystem behavior}
\label{subsec:same-brauer-different-representatives}

Subsystem reducibility is a property of the torsor under $\PGL_n$, not only of the Brauer class. We state this explicitly.

\begin{proposition}[Subsystem reducibility is not a Brauer invariant]
\label{prop:not-brauer-invariant}
Fix a subsystem type $\mathbf d=(d_1,\dots,d_s)$ with $n=\prod_i d_i$ and assume that $\Sigma_{\mathbf d}\subset \PP^{n-1}$ is a proper subscheme (thus, at least two of the $d_i$ are $>1$). Then there exist a scheme $X$ and Azumaya algebras $\mathcal A,\mathcal A'$ of degree $n$ on $X$ such that
\[
  [\mathcal A]=[\mathcal A']\in \Br(X),
\]
but $\mathcal A$ admits a subsystem structure of type $\mathbf d$ whereas $\mathcal A'$ does not. In fact, one may take $X=\PP^1_k$ and $[\mathcal A]=[\mathcal A']=0$.
\end{proposition}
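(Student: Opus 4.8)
The plan is to produce the example explicitly on $X=\PP^1_k$, exploiting that there $\Br(\PP^1_k)=0$ and (Grothendieck) every vector bundle splits as a sum of line bundles, so the classification of degree-$n$ Azumaya algebras up to Brauer class, and of their $\PGL_n$-torsors up to isomorphism, is completely transparent. I would set $\mathcal A:=\underline{\End}(\mathcal O_{\PP^1}^{\oplus n})$ and $\mathcal A':=\underline{\End}(E')$ with $E':=\mathcal O(1)\oplus\mathcal O^{\oplus(n-1)}$. Both are endomorphism algebras of vector bundles, so $[\mathcal A]=[\mathcal A']=0$ in $\Br(\PP^1)$ by exactness of $H^1(X,\GL_n)\to H^1(X,\PGL_n)\xrightarrow{\delta}\Br(X)$. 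The torsor of $\mathcal A$ is trivial, so it admits the trivial reduction to $G_{\mathbf d}$ --- equivalently, by Corollary~\ref{cor:subsystem-structure-iff-segre}, $SB(\mathcal A)\cong\PP^{n-1}\times\PP^1$ contains the constant subfibration $\Sigma_{\mathbf d}\times\PP^1$ --- so $\mathcal A$ carries a $\mathbf d$-subsystem structure.

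The content is the non-reducibility of $\mathcal A'$. By Theorem~\ref{thm:reduction-criterion}(i) a $\mathbf d$-subsystem structure on $\mathcal A'$ is a reduction of the torsor $\PP(E')\to\PP^1$ to $G_{\mathbf d}$. Because $\PP^1_k$ is simply connected, the $\pi_0(G_{\mathbf d})$-torsor induced from any $G_{\mathbf d}$-torsor is finite étale over $\PP^1$, hence trivial; so such a reduction refines to a reduction to the connected stabilizer $G_{\mathbf d}^{\circ}=\prod_i\PGL_{d_i}$ (the image of the tensor representation). Since $\Br(\PP^1)=0$, each $\PGL_{d_i}$-factor of that reduction is $\PP(F_i)$ for a rank-$d_i$ vector bundle $F_i$, and the associated $\PP^{n-1}$-bundle (Severi--Brauer scheme) is $\PP(F_1\otimes\cdots\otimes F_s)$. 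Thus $\mathcal A'$ admits a $\mathbf d$-subsystem structure if and only if $E'\cong(F_1\otimes\cdots\otimes F_s)\otimes L$ for some rank-$d_i$ bundles $F_i$ and a line bundle $L$; i.e.\ iff $\mathcal O(1)\oplus\mathcal O^{\oplus(n-1)}$ admits a tensor decomposition into factors of ranks $d_1,\dots,d_s$ up to a line-bundle twist. I would then show this is impossible.

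For the impossibility I would introduce the \emph{spread} $\sigma(F):=\max_j a_j-\min_j a_j$ of a split bundle $F\cong\bigoplus_j\mathcal O(a_j)$ on $\PP^1$. It is unchanged by twisting with a line bundle, and $\sigma(F_1\otimes\cdots\otimes F_s)=\sum_i\sigma(F_i)$, since the maximal (resp.\ minimal) twist occurring in a tensor product is the sum of the maximal (resp.\ minimal) twists of the factors. As $\sigma(E')=1$, a decomposition $E'\cong(F_1\otimes\cdots\otimes F_s)\otimes L$ would force exactly one index $i_0$ with $\sigma(F_{i_0})=1$ and $\sigma(F_i)=0$ for $i\neq i_0$; necessarily $d_{i_0}\geq 2$ (a line bundle has spread $0$), and $F_i\cong\mathcal O(c_i)^{\oplus d_i}$ for $i\neq i_0$. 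Then $(F_1\otimes\cdots\otimes F_s)\otimes L\cong(F_{i_0}\otimes M)^{\oplus N}$ for a line bundle $M$ and $N=\prod_{i\neq i_0}d_i$, so its splitting type has its two distinct twist values each occurring with multiplicity divisible by $N$. But the splitting type of $E'$ has the multiplicities $1$ and $n-1$, so $N\mid\gcd(1,n-1)=1$, i.e.\ $N=1$. Since at least two of the $d_i$ exceed $1$ and $i_0$ is one such index, there is $j_0\neq i_0$ with $d_{j_0}\geq2$, hence $N\geq 2$ --- a contradiction. Therefore $\mathcal A'$ admits no $\mathbf d$-subsystem structure, while $\mathcal A$ does, and $[\mathcal A]=[\mathcal A']=0$, proving the proposition.

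The one step demanding care, rather than being a genuine difficulty, is the translation in the second paragraph: passing from an abstract $G_{\mathbf d}$-reduction over $\PP^1$ to a concrete tensor factorization of $E'$ up to twist, which uses simple connectedness to discard the permutation component group $\pi_0(G_{\mathbf d})$ and then $\Br(\PP^1)=0$ to realize the $\PGL_{d_i}$-torsors as projectivized bundles. Once that is in place the argument is purely numerical --- additivity of the spread and the divisibility obstruction $N\geq 2\nmid 1$ --- and it strengthens the gcd-of-degrees obstruction of Proposition~\ref{prop:curve_gcd_degree_obstruction} to all $\mathbf d$ with at least two factors $>1$, including cases such as $\mathbf d=(2,3)$ where $\gcd(d_A,d_B)=1$.
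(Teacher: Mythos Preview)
Your proof is correct and follows essentially the same approach as the paper: same base $X=\PP^1_k$, same pair $E_{\mathrm{red}}=\mathcal O^{\oplus n}$ and $E_{\mathrm{irr}}=\mathcal O^{\oplus(n-1)}\oplus\mathcal O(1)$, and the same reduction (via simple connectedness of $\PP^1$ and $\Br(\PP^1)=0$) of the question to whether $E_{\mathrm{irr}}$ tensor-factors up to a line-bundle twist. The only difference is in the final combinatorial step: the paper counts the multiplicity $\prod_i m_i$ of the minimal degree in the tensor product and shows it cannot equal $n-1$, whereas you use additivity of the spread to force all but one factor to be isotypic and then observe that the multiplicity $1$ of the top degree in $E_{\mathrm{irr}}$ cannot be divisible by $N=\prod_{i\neq i_0}d_i\ge 2$; both arguments are short and equivalent in strength.
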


\begin{proof}
Take $X=\PP^1_k$, so $\Br(X)=0$.

\medskip
\noindent\textbf{Step 1:  A reducible representative.}
Let
\[
  E_{\mathrm{red}}:=\mathcal O_{\PP^1}^{\oplus n}.
\]
Then $\mathcal A:=\underline{\End}(E_{\mathrm{red}})$ is split, hence $[\mathcal A]=0$. Moreover $E_{\mathrm{red}}$ admits the evident tensor factorization
\[
  E_{\mathrm{red}}\ \cong\ \bigl(\mathcal O^{\oplus d_1}\bigr)\otimes\cdots\otimes\bigl(\mathcal O^{\oplus d_s}\bigr),
\]
so the associated $\PGL_n$--torsor $\PP(E_{\mathrm{red}})\to \PP^1$ reduces to the stabilizer $G_{\mathbf d}$. Thus $\mathcal A$ admits a subsystem structure of type $\mathbf d$.

\medskip
\noindent\textbf{Step 2: A non-reducible representative with the same Brauer class.}
Let
\[
  E_{\mathrm{irr}}:=\mathcal O_{\PP^1}^{\oplus (n-1)}\oplus \mathcal O_{\PP^1}(1),
  \qquad
  \mathcal A':=\underline{\End}(E_{\mathrm{irr}}).
\]
Again $\mathcal A'$ is split, so $[\mathcal A']=0=[\mathcal A]$.

We claim that $\mathcal A'$ admits \emph{no} globally compatible subsystem structure of type $\mathbf d$. Suppose to the contrary that the associated $\PGL_n$--torsor $\PP(E_{\mathrm{irr}})\to \PP^1$ admits a reduction to $G_{\mathbf d}$.

Since $k$ is algebraically closed, $\pi_1^{\mathrm{\acute et}}(\PP^1_k)=1$, hence every finite \'etale torsor on $\PP^1_k$ is trivial. In particular,
any component-group (permutation) part of $G_{\mathbf d}$ produces no monodromy. Thus (after a global choice of ordering of equal factors) the reduction may be viewed as a reduction to the identity component, which is the image of $\prod_i \PGL_{d_i}$.

Since $\Br(\PP^1)=0$, each induced $\PGL_{d_i}$--torsor lifts to a $\GL_{d_i}$--torsor, hence to a rank-$d_i$ vector bundle $F_i$. The tensor product representation then gives an identification of $\PGL_n$--torsors
\[
  \PP(E_{\mathrm{irr}})\ \simeq\ \PP(F_1\otimes\cdots\otimes F_s).
\]
By Lemma~\ref{lem:P1-projective-classification}, this implies
\[
  E_{\mathrm{irr}}\ \simeq\ (F_1\otimes\cdots\otimes F_s)\otimes L
\]
for some line bundle $L$ on $\PP^1$.

Now write the Grothendieck splitting of each $F_i$ as $F_i\simeq \bigoplus_{j=1}^{d_i}\mathcal O(a_{ij})$ with $a_{i1}\le \cdots\le a_{i d_i}$. Let $m_i$ be the multiplicity of the minimal degree $a_{i1}$ inside $\{a_{ij}\}_j$. Then the tensor product splits as a direct sum of line bundles $\mathcal O\!\left(\sum_i a_{i j_i}\right)$, and the minimal degree occurs with multiplicity
\[
  m\ :=\ \prod_{i=1}^s m_i.
\]
Twisting by $L$ shifts all degrees uniformly and does not change this multiplicity.

On the other hand, $E_{\mathrm{irr}}=\mathcal O^{\oplus(n-1)}\oplus \mathcal O(1)$ has normalized splitting type $\{0,\dots,0,1\}$, so the minimal degree occurs with multiplicity $n-1$. Hence we must have $\prod_i m_i=n-1$.

But each $m_i$ satisfies $1\le m_i\le d_i$, and since $\Sigma_{\mathbf d}$ is proper we have at least two factors $d_i>1$, so for every $i$ we have
\[
  \frac{n}{d_i}=\prod_{j\neq i} d_j\ \ge\ 2.
\]
If for some $i$ we had $m_i\le d_i-1$, then
\[
  \prod_{j=1}^s m_j\ \le\ (d_i-1)\prod_{j\neq i} d_j
  \;=\; n-\frac{n}{d_i}\ \le\ n-2,
\]
so it could not equal $n-1$. Therefore we must have $m_i=d_i$ for all $i$, and then $\prod_i m_i=n$, again impossible. This contradiction shows that no such decomposition of $E_{\mathrm{irr}}$ exists, hence $\PP(E_{\mathrm{irr}})$ does not reduce to $G_{\mathbf d}$ and $\mathcal A'$ admits no subsystem structure of type $\mathbf d$.

Thus $[\mathcal A]=[\mathcal A']=0$ but only $\mathcal A$ is subsystem-reducible of type $\mathbf d$, as claimed.
\end{proof}

\begin{remark}
Even for $\beta=0$, subsystem structures are not automatic. Thus the subsystem structure is additional data not determined by the Brauer class. Section \ref{sec:algebraic-entangling-obstructions} presents obstruction data beyond the Brauer class.
\end{remark}

\subsubsection{Split Brauer class on $\PP^1$: classification of subsystem reducibility}
\label{subsec:P1-classification}

Throughout this subsection let $k$ be algebraically closed of characteristic $0$ and set $X=\PP^1_k$. In particular $\Br(X)=0$.

\begin{lemma}[Projective bundles on $\PP^1$ and splitting types]
\label{lem:P1-projective-classification}
Let $P\to \PP^1$ be a principal $\PGL_n$--torsor. Then there exists a rank-$n$ vector bundle $E$ on $\PP^1$ such that $P\simeq \PP(E)$. Moreover, if $E$ and $E'$ are rank-$n$ bundles with $\PP(E)\simeq \PP(E')$, then $E'\simeq E\otimes L$ for some line bundle $L\in \Pic(\PP^1)$.

Consequently, after choosing any such lift $E$ and writing its Grothendieck splitting
\[
  E \;\cong\; \bigoplus_{m=1}^n \cO_{\PP^1}(a_m)
  \qquad (a_1\le \cdots \le a_n),
\]
the multiset $\{a_m-a_1\}_{m=1}^n$ depends only on the isomorphism class of $P$.
\end{lemma}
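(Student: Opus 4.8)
The plan is to deduce all three assertions from two standard inputs: the vanishing $\Br(\PP^1_k)=0$ (Tsen's theorem, valid because $k$ is algebraically closed), and the description $\Pic(\PP^1_k)=\Z$ together with the way line bundles act on lifts of a $\PGL_n$--torsor.

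\emph{Step 1 (existence of a vector--bundle lift).} I would use the short exact sequence $1\to\Gm\to\GL_n\to\PGL_n\to1$ and the associated fppf boundary map $\delta:H^1(\PP^1,\PGL_n)\to H^2(\PP^1,\Gm)$; by definition $\delta([P])$ is the Brauer class of $P$. Since $\PP^1_k$ is regular, $H^2_{\mathrm{fppf}}(\PP^1,\Gm)$ agrees with the étale group, is torsion, and equals $\Br(\PP^1_k)$, which vanishes by Tsen's theorem. Hence $\delta([P])=0$, so $[P]$ lifts to a class in $H^1(\PP^1,\GL_n)$, i.e.\ to a rank-$n$ vector bundle $E$ with $P\cong\PP(E)$.

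\emph{Step 2 (uniqueness up to a line--bundle twist).} The fibre over $[P]$ of the natural map $H^1(\PP^1,\GL_n)\to H^1(\PP^1,\PGL_n)$, which is nonempty by Step 1, is a torsor under the image of $H^1(\PP^1,\Gm)=\Pic(\PP^1)$, with the action given by $(E,L)\mapsto E\otimes L$. Therefore any two rank-$n$ lifts $E,E'$ of $P$ satisfy $E'\cong E\otimes L$ for some $L\in\Pic(\PP^1)$. Alternatively one argues geometrically: an $X$--isomorphism $\PP(E)\xrightarrow{\sim}\PP(E')$ carries $\mathcal{O}_{\PP(E')}(1)$ to a line bundle on $\PP(E)$ restricting to $\mathcal{O}(1)$ on every fibre, hence of the form $\mathcal{O}_{\PP(E)}(1)\otimes\pi^*L$ by the projective bundle formula for Picard groups; pushing forward along $\pi$ then yields $E'\cong E\otimes L$ (up to the chosen $\PP(-)$ convention).

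\emph{Step 3 (the invariant multiset).} Since $\Pic(\PP^1)=\Z\cdot\mathcal{O}(1)$, any two lifts differ by $E'\cong E\otimes\mathcal{O}(c)$ for a single integer $c$. Writing the Grothendieck decomposition $E\cong\bigoplus_{m=1}^n\mathcal{O}(a_m)$ with $a_1\le\cdots\le a_n$, one gets $E'\cong\bigoplus_m\mathcal{O}(a_m+c)$, so the normalized multiset $\{a_m-a_1\}_m=\{(a_m+c)-(a_1+c)\}_m$ is unchanged. Hence this multiset depends only on the isomorphism class of $P$, proving the last claim.

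\emph{Main obstacle.} There is no real difficulty here; the only delicate point is the convention bookkeeping in Step 2 — which $\PP(-)$ convention is in force and whether $\pi_*\mathcal{O}_{\PP(E)}(1)$ recovers $E$ or $E^\vee$ — so that the twisting line bundle appears with the correct variance and the statement matches the normalization used elsewhere in the paper. The torsor-theoretic formulation sidesteps this entirely, so I would present it as the principal argument and keep the geometric version as a remark.
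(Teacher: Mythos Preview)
Your proposal is correct and follows essentially the same route as the paper: both use the short exact sequence $1\to\Gm\to\GL_n\to\PGL_n\to1$ together with $\Br(\PP^1)=0$ to lift $P$ to a vector bundle, identify the ambiguity of the lift with the $\Pic(\PP^1)$--action via nonabelian cohomology, and then observe that a uniform shift of the Grothendieck splitting type leaves the normalized multiset $\{a_m-a_1\}$ unchanged. Your treatment is slightly more expansive (you name Tsen's theorem and record the alternative geometric argument via $\pi_*\mathcal{O}_{\PP(E)}(1)$), but the underlying argument is the same as the paper's.
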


\begin{proof}
Since $\Br(\PP^1)=0$, the connecting map $H^1(\PP^1,\PGL_n)\to H^2(\PP^1,\Gm)=\Br(\PP^1)$ is trivial, so $H^1(\PP^1,\GL_n)\to H^1(\PP^1,\PGL_n)$ is surjective. Thus $P$ admits a lift to a $\GL_n$--torsor, so $P\simeq \PP(E)$ for some vector bundle $E$.

For the uniqueness statement, apply nonabelian cohomology to $1\to \Gm \to \GL_n \to \PGL_n \to 1$. The kernel of $H^1(\PP^1,\GL_n)\to H^1(\PP^1,\PGL_n)$ is the image of $H^1(\PP^1,\Gm)=\Pic(\PP^1)$. Concretely this says that two lifts differ by tensoring with a line bundle.

Finally, Grothendieck splitting holds for any vector bundle on $\PP^1$, and twisting by a line bundle shifts all $a_m$ by the same integer, so the normalized multiset $\{a_m-a_1\}$ is intrinsic to $P$.
\end{proof}

\begin{theorem}[A splitting-type criterion for bipartite tensor reducibility on $\PP^1$]
\label{thm:P1-bipartite-reducibility}
Fix a bipartite type $\mathbf d=(d_A,d_B)$ and set $n=d_A d_B$. Let $P\to \PP^1$ be a principal $\PGL_n$--torsor, and choose a lift $P\simeq \PP(E)$ as in Lemma~\ref{lem:P1-projective-classification}. Write
\[
  E \;\cong\; \bigoplus_{m=1}^n \cO_{\PP^1}(a_m)
  \qquad (a_1\le \cdots \le a_n).
\]

Then the following are equivalent:
\begin{enumerate}
\item $P$ is $G_{\mathbf d}$--reducible (admits a reduction of structure group to the stabilizer $G_{\mathbf d}\subset \PGL_n$), so the split Azumaya algebra $\underline{\End}(E)$ admits a subsystem structure of type $\mathbf d$. 
\item There exist vector bundles $F$ and $G$ on $\PP^1$ of ranks $d_A$ and $d_B$ and a line bundle $L$ such that
\[
  E \;\cong\; (F\otimes G)\otimes L.
\]
\item (\emph{Additive decomposition of splitting type}) There exist integers $b_1\le\cdots\le b_{d_A}$, $c_1\le\cdots\le c_{d_B}$ and an integer $t$ such that the multiset $\{a_m\}_{m=1}^n$ equals the multiset
\[
  \{\, b_i+c_j+t \mid 1\le i\le d_A,\ 1\le j\le d_B \,\}.
\]
\end{enumerate}

In particular, the $G_{\mathbf d}$--reducible locus inside the split Brauer fiber $\delta^{-1}(0)=H^1(\PP^1,\PGL_n)$ is a proper subset (unless $(d_A,d_B)=(1,n)$ or $(n,1)$), described by the purely numerical condition (3).
\end{theorem}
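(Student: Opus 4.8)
The plan is to prove the three-way equivalence by establishing $(2)\Leftrightarrow(3)$ through a direct computation with Grothendieck splittings, $(1)\Leftrightarrow(2)$ by feeding the cohomological reduction criterion (Theorem~\ref{thm:reduction-criterion}) through the two special features of $X=\PP^1_k$ — namely $\Br(\PP^1)=0$ and $\pi_1^{\mathrm{\acute et}}(\PP^1_k)=1$ — and the closing ``proper subset'' assertion by exhibiting a single non-decomposable splitting type. Since $\Br(\PP^1)=0$, the connecting map $\delta$ is zero and $\delta^{-1}(0)=H^1(\PP^1,\PGL_n)$, which by Lemma~\ref{lem:P1-projective-classification} is in bijection with normalized splitting multisets $\{0=a_1\le\cdots\le a_n\}$; the statement thus becomes a concrete assertion about such multisets. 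The main obstacle is the reduction-to-the-identity-component step inside $(1)\Rightarrow(2)$, which must deal with the possible $\mathfrak S_2$ of component symmetries when $d_A=d_B$.

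For $(1)\Rightarrow(2)$: suppose $P$ admits a reduction to $G_{\mathbf d}$. The exact sequence $1\to G_{\mathbf d}^\circ\to G_{\mathbf d}\to\pi_0(G_{\mathbf d})\to1$ sends this reduction to a $\pi_0(G_{\mathbf d})$-torsor on $\PP^1$; since $\pi_0(G_{\mathbf d})$ is finite (trivial if $d_A\neq d_B$, isomorphic to $\mathfrak S_2$ if $d_A=d_B$) and $\pi_1^{\mathrm{\acute et}}(\PP^1_k)=1$, that torsor is trivial, so by the usual twisting argument (as in the proof of Proposition~\ref{prop:not-brauer-invariant}) the reduction descends to a reduction of $P$ to the identity component $G_{\mathbf d}^\circ$. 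In the bipartite case $G_{\mathbf d}^\circ$ is the image of the tensor representation $\GL_{d_A}\times\GL_{d_B}\to\GL_n\to\PGL_n$, whose kernel is exactly the pairs of scalars, so $G_{\mathbf d}^\circ\cong\PGL_{d_A}\times\PGL_{d_B}\hookrightarrow\PGL_n$. We therefore obtain a $\PGL_{d_A}$-torsor and a $\PGL_{d_B}$-torsor on $\PP^1$; since $\Br(\PP^1)=0$ each lifts to a $\GL$-torsor, i.e.\ to vector bundles $F$ of rank $d_A$ and $G$ of rank $d_B$, and by construction $P\simeq\PP(F\otimes G)$. Lemma~\ref{lem:P1-projective-classification} then forces $E\simeq(F\otimes G)\otimes L$ for some $L\in\Pic(\PP^1)$. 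Conversely, given such $F,G,L$, the pair $(F,G)$ defines a $\GL_{d_A}\times\GL_{d_B}$-torsor whose image under the tensor representation lands in $G_{\mathbf d}$, and since $\PP(E)\simeq\PP(F\otimes G)$ this induces $P$; this is exactly a $G_{\mathbf d}$-reduction. Hence $(1)\Leftrightarrow(2)$.

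For $(2)\Leftrightarrow(3)$: write Grothendieck splittings $F\cong\bigoplus_{i=1}^{d_A}\cO(b_i)$, $G\cong\bigoplus_{j=1}^{d_B}\cO(c_j)$ and $L\cong\cO(t)$. Then $(F\otimes G)\otimes L\cong\bigoplus_{i,j}\cO(b_i+c_j+t)$, so $E\simeq(F\otimes G)\otimes L$ holds if and only if the splitting multiset $\{a_m\}$ of $E$ equals $\{b_i+c_j+t\}$, which is precisely condition $(3)$. For the converse, one only needs that a vector bundle on $\PP^1$ is determined up to isomorphism by its splitting type: given the multiset equality of $(3)$, set $F:=\bigoplus_i\cO(b_i)$, $G:=\bigoplus_j\cO(c_j)$, $L:=\cO(t)$, and use $E\cong\bigoplus_m\cO(a_m)$ to conclude $E\simeq(F\otimes G)\otimes L$.

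For the ``proper subset'' claim: when $(d_A,d_B)\neq(1,n),(n,1)$ we have $d_A,d_B\ge2$. The split torsor $\PP(\cO^{\oplus n})$ is $G_{\mathbf d}$-reducible (take $F=\cO^{\oplus d_A}$, $G=\cO^{\oplus d_B}$), so the reducible locus is nonempty; but $\PP(\cO^{\oplus(n-1)}\oplus\cO(1))$ is \emph{not} reducible, since its normalized splitting type $\{0,\dots,0,1\}$ (with $n-1$ zeros) admits no additive decomposition: in any $\{b_i+c_j+t\}$ the minimal value occurs with multiplicity $m_A m_B$, where $1\le m_A\le d_A$ and $1\le m_B\le d_B$ are the multiplicities of $\min_i b_i$ and $\min_j c_j$; if some $m_A\le d_A-1$ then $m_A m_B\le(d_A-1)d_B=n-d_B\le n-2$, forcing $m_A=d_A$ and likewise $m_B=d_B$, hence multiplicity $n\neq n-1$, a contradiction (this is the computation already used in the proof of Proposition~\ref{prop:not-brauer-invariant}). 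Thus the reducible locus is a nonempty proper subset. When instead $(d_A,d_B)=(1,n)$ or $(n,1)$, the Segre locus equals all of $\PP^{n-1}$, so $G_{\mathbf d}=\PGL_n$ and every torsor is trivially $G_{\mathbf d}$-reducible, consistent with $(3)$ holding for every multiset (take one of the factor multisets to be $\{0\}$). This completes the argument.
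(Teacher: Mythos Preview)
Your proof is correct and follows essentially the same approach as the paper: $(2)\Leftrightarrow(3)$ via Grothendieck splitting, $(2)\Rightarrow(1)$ via transition functions landing in the tensor-product subgroup, and $(1)\Rightarrow(2)$ by first killing the component group using $\pi_1^{\mathrm{\acute et}}(\PP^1_k)=1$, then lifting each $\PGL_{d_i}$-torsor to a vector bundle using $\Br(\PP^1)=0$, and finally applying Lemma~\ref{lem:P1-projective-classification}. You additionally spell out the ``proper subset'' claim (which the paper only asserts) by invoking the multiplicity computation from Proposition~\ref{prop:not-brauer-invariant}, and you handle the degenerate types $(1,n),(n,1)$ explicitly; both are welcome clarifications but do not change the substance.
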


\begin{proof}
(2)$\Leftrightarrow$(3) is immediate from Grothendieck splitting on $\PP^1$: writing $F\simeq \oplus_i \cO(b_i)$, $G\simeq \oplus_j \cO(c_j)$ and $L\simeq \cO(t)$ gives
\[
  (F\otimes G)\otimes L \;\simeq\; \bigoplus_{i,j} \cO(b_i+c_j+t),
\]
and conversely any such multiset determines $F,G,L$.

(2)$\Rightarrow$(1): choose an fppf (in fact Zariski) cover $\{U_\alpha\}$ trivializing $F$, $G$, and $L$. On overlaps $U_{\alpha\beta}$ the transition functions of $E\simeq (F\otimes G)\otimes L$ are of the form
\[
  g^E_{\alpha\beta} \;=\; (g^F_{\alpha\beta}\otimes g^G_{\alpha\beta})\cdot \lambda_{\alpha\beta}
  \in \GL_n,
\]
where $\lambda_{\alpha\beta}\in \Gm$ is the scalar transition of $L$. Passing to $\PGL_n$ kills scalars, so the induced cocycle lies in the image of the tensor product subgroup, hence in $G_{\mathbf d}\subset \PGL_n$. This is exactly a reduction of $P$ to $G_{\mathbf d}$.

(1)$\Rightarrow$(2): since $\PP^1$ has trivial étale fundamental group, any finite étale torsor is trivial. In particular, if $d_A=d_B$ and $G_{\mathbf d}$ has a component group permuting equal Segre factors, there is no nontrivial monodromy, so we may (after a global choice of ordering) regard a $G_{\mathbf d}$--reduction as a reduction to the identity component, which is the image of $\PGL_{d_A}\times\PGL_{d_B}$.

Thus (1) yields principal $\PGL_{d_A}$-- and $\PGL_{d_B}$--torsors $Q_A$ and $Q_B$ on $\PP^1$ whose induced $\PGL_n$--torsor (via the tensor representation) is isomorphic to $P$. Because $\Br(\PP^1)=0$, both $Q_A$ and $Q_B$ lift to vector bundles $F$ and $G$ of ranks $d_A$ and $d_B$.
The induced $\PGL_n$--torsor is then $\PP(F\otimes G)$ by inspection of transition functions. Hence $\PP(E)\simeq \PP(F\otimes G)$, and Lemma~\ref{lem:P1-projective-classification} implies $E\simeq (F\otimes G)\otimes L$ for some line bundle $L$.
\end{proof}

\begin{corollary}[Complete criterion for $(2,2)$ on $\PP^1$]
\label{cor:P1-22-criterion}
Let $n=4$ and $\mathbf d=(2,2)$. Let $P\simeq \PP(E)\to \PP^1$ with $E\simeq \cO(a_1)\oplus\cO(a_2)\oplus\cO(a_3)\oplus\cO(a_4)$ and $a_1\le a_2\le a_3\le a_4$.

Then $P$ is $G_{(2,2)}$--reducible if and only if
\[
  a_1+a_4 \;=\; a_2+a_3.
\]
So, after twisting $E$ so that $a_1=0$, the normalized splitting type is
\[
  (0,\ b,\ c,\ b+c)\qquad \text{for some integers }0\le b\le c.
\]
When the condition holds, one can take explicitly
\[
  F=\cO(a_1)\oplus \cO(a_3),\qquad
  G=\cO(0)\oplus \cO(a_2-a_1),
\]
so that $E\simeq F\otimes G$ and the $(2,2)$ subsystem structure is induced from this factorization.
\end{corollary}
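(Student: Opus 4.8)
The plan is to deduce this directly from Theorem~\ref{thm:P1-bipartite-reducibility} specialized to $(d_A,d_B)=(2,2)$, so that the entire content collapses to an elementary statement about multisets of four integers. Condition~(3) of that theorem, with $d_A=d_B=2$, asserts that $P\simeq\PP(E)$ is $G_{(2,2)}$--reducible if and only if there exist integers $b_1\le b_2$, $c_1\le c_2$ and $t$ such that the multiset $\{a_1,a_2,a_3,a_4\}$ equals $\{\,b_i+c_j+t : i,j\in\{1,2\}\,\}$. Absorbing $t$ into the $b_i$ (replace $b_i$ by $b_i+t$, which preserves the inequality $b_1\le b_2$) we may take $t=0$, so the task is: decide when $\{a_1\le a_2\le a_3\le a_4\}$ is a \emph{tensor sum} $\{b_i+c_j\}$ of two $2$-element multisets.

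For the forward direction I would note that among the four values $b_1+c_1,\ b_1+c_2,\ b_2+c_1,\ b_2+c_2$, the entry $b_1+c_1$ is a minimum and $b_2+c_2$ a maximum, so necessarily $a_1=b_1+c_1$ and $a_4=b_2+c_2$, hence $\{a_2,a_3\}=\{b_1+c_2,\,b_2+c_1\}$ as multisets. Adding the extremes, $a_1+a_4=(b_1+c_1)+(b_2+c_2)=(b_1+c_2)+(b_2+c_1)=a_2+a_3$, so reducibility forces $a_1+a_4=a_2+a_3$. Conversely, assuming $a_1+a_4=a_2+a_3$, I would exhibit the decomposition explicitly with $b_1=a_1$, $b_2=a_3$, $c_1=0$, $c_2=a_2-a_1$; these satisfy $b_1\le b_2$ and $c_1\le c_2$ since $a_1\le a_3$ and $a_1\le a_2$, and the four sums are $a_1,\ a_2,\ a_3,\ a_2+a_3-a_1=a_4$, recovering the multiset $\{a_1,a_2,a_3,a_4\}$. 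Since every vector bundle on $\PP^1$ splits into line bundles (Grothendieck), matching splitting multisets yields an isomorphism $E\cong F\otimes G$ with $F=\cO(a_1)\oplus\cO(a_3)$ and $G=\cO(0)\oplus\cO(a_2-a_1)$, and the associated $\PGL_2\times\PGL_2$--torsor $\PP(F)\times_{\PP^1}\PP(G)$ pushes into $G_{(2,2)}$, furnishing the subsystem structure --- exactly the implication (2)$\Rightarrow$(1) of Theorem~\ref{thm:P1-bipartite-reducibility}.

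For the normalized form I would twist $E$ by $\cO(-a_1)$ so the degrees become $0\le a_2-a_1\le a_3-a_1\le a_4-a_1$; setting $b=a_2-a_1$, $c=a_3-a_1$ gives $0\le b\le c$, and $a_1+a_4=a_2+a_3$ rewrites as $a_4-a_1=(a_2-a_1)+(a_3-a_1)=b+c$, so the normalized splitting type is $(0,b,c,b+c)$. I do not anticipate a genuine obstacle here: the only point needing a moment's care is that the minimum and maximum of a tensor sum are genuinely pinned to $b_1+c_1$ and $b_2+c_2$, while the two middle values $b_1+c_2,\ b_2+c_1$ may coincide (when $b_2-b_1=c_2-c_1$) without affecting the sum identity; everything else is bookkeeping layered on top of Theorem~\ref{thm:P1-bipartite-reducibility}.
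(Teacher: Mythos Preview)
Your proof is correct and follows the same approach as the paper: apply Theorem~\ref{thm:P1-bipartite-reducibility} with $d_A=d_B=2$ and reduce to the combinatorial ``parallelogram'' condition on the splitting multiset, then read off the explicit $F,G$. You have in fact written out the details of the multiset equivalence and the explicit construction more fully than the paper's own terse proof.
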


\begin{proof}
Apply Theorem~\ref{thm:P1-bipartite-reducibility} with $d_A=d_B=2$. In the $2\times 2$ case, an additive decomposition of the multiset $\{a_m\}$ is equivalent to the ``parallelogram'' condition $a_1+a_4=a_2+a_3$ (after ordering). The explicit $F,G$ are obtained by the constructive choice explained in the statement.
\end{proof}

\begin{remark}
Even though $\beta=0\in \Br_{(2,2)}(\PP^1)$ (so some representative admits a subsystem structure), Corollary~\ref{cor:P1-22-criterion} shows that many split representatives do not. For example, the splitting types $(0,0,1,1)$ satisfy $0+1=0+1$ and are $(2,2)$--reducible, whereas $(0,0,0,1)$ fail the condition and are not $(2,2)$--reducible. Thus, already over $\PP^1$ the existence of entanglement geometry is controlled by the isomorphism class of the torsor under $\PGL_n$, not by the Brauer class alone.
\end{remark}

\begin{example}[An explicit split example on $\mathbb P^1$ for $\mathbf d=(2,2)$]
\label{ex:P1-split-different}
Let $X=\PP^1_k$, $n=4$, and $\mathbf d=(2,2)$.

\smallskip\noindent
\textbf{(i) $(2,2)$--reducible.}
Take $E_{\mathrm{red}}:=\cO^{\oplus 2}\oplus \cO(1)^{\oplus 2}$, so that $a_1+a_4=a_2+a_3$ holds and $\PP(E_{\mathrm{red}})$ is $(2,2)$--reducible by
Corollary~\ref{cor:P1-22-criterion}.

\smallskip\noindent
\textbf{(ii) not $(2,2)$--reducible.}
Take $E_{\mathrm{irr}}:=\cO^{\oplus 3}\oplus \cO(1)$, for which $a_1+a_4\neq a_2+a_3$. Hence $\PP(E_{\mathrm{irr}})$ is not $(2,2)$--reducible by Corollary~\ref{cor:P1-22-criterion}, even though both bundles define split Azumaya algebras (Brauer class $0$).
\end{example}

\section{Algebraic entangling obstructions beyond the Brauer class}
\label{sec:algebraic-entangling-obstructions}

We now separate the Brauer class from the reduction data that it does not see.
The obstruction to a subsystem structure is not, in general, an abelian cohomology class.
It is first the problem of lifting the classifying map of the torsor under $\PGL_n$
through $BG_{\mathbf d}\to B\PGL_n$. Abelian classes enter only after linear
lift data or incidence data have been chosen. The central idea of the following derivation of entangling obstructions is applied to a study of quantum many-body systems \cite{Ikeda:2026yld}.

Throughout this section $BG=[\Spec k/G]$ denotes the classifying stack of a linear
algebraic group. Maps into $BG$ are taken in the fppf topology. If the base is stacky,
the same notation denotes the corresponding $2$-mapping stack.  When we write Chow groups
of $BG$, we use the Totaro--Edidin--Graham approximation, which agrees with the usual equivariant Chow
groups in this quotient stack setting \cite{Totaro1999Chow,EdidinGraham1998EquivariantChow}.
For nonabelian cohomology and gerbes we follow the standard language of
\cite{Giraud1971CohomologieNonAbelienne}. 

\subsection{The stack of reductions to $G_{\mathbf d}$}

Let $P\to X$ be the torsor under $\PGL_n$ attached to the Azumaya algebra $\mathcal A$, and let
$f_P:X\to B\PGL_n$ be its classifying map.  The full lifting problem controlling a locus of product states
of type $\mathbf d$ is
\[
  X \xrightarrow{f_P} B\PGL_n
  \qquad\text{through}\qquad
  BG_{\mathbf d}\longrightarrow B\PGL_n .
\]
\begin{definition}[Stack of reductions to $G_{\mathbf d}$]
\label{def:stack-reductions-Gd}
The stack of reductions of $P$ to $G_{\mathbf d}$ is
\[
  \operatorname{Red}_{\mathbf d}(P)
  := \operatorname{Map}_{/B\PGL_n}(X,BG_{\mathbf d}).
\]
Equivalently,
\[
  \operatorname{Red}_{\mathbf d}(P)\simeq \Gamma_X(P/G_{\mathbf d}),
\]
where $P/G_{\mathbf d}:=P\times^{\PGL_n}(\PGL_n/G_{\mathbf d})\to X$ is the associated homogeneous
fibration.  This is the stack version of the reduction functor
$\mathfrak{Red}_{\mathbf d}(\mathcal A)$ used above.
\end{definition}

We say that the projective obstruction vanishes if
\[
  \operatorname{Red}_{\mathbf d}(P)\neq\varnothing.
\]
This is a statement about the nonemptiness of a groupoid or stack, not the vanishing of a
preferred element of an abelian group.  The construction recovers exactly the subsystem
structures of Definition~\ref{def:subsystem-structure} and the Hilbert scheme locus description of
Theorem~\ref{thm:subsystem-moduli-hilbert}.

\begin{proposition}[Local cocycle criterion for reduction]
\label{prop:cech-reduction-criterion}
Let $\{U_i\to X\}$ be an fppf cover over which $P$ is represented by a cocycle with values in $\PGL_n$,
$g_{ij}:U_{ij}\to \PGL_n$.  Then $\operatorname{Red}_{\mathbf d}(P)$ is nonempty if and only if, after refining the
cover if necessary, there exist maps $a_i:U_i\to \PGL_n$ such that
\[
  a_i g_{ij} a_j^{-1}\in G_{\mathbf d}(U_{ij})
  \qquad\text{for all }i,j.
\]
\end{proposition}

\begin{proof}
This is the usual nonabelian descent criterion for reducing a torsor along a closed subgroup
$G_{\mathbf d}\subset \PGL_n$.  A choice of the $a_i$ changes the local trivializations of $P$; the
condition displayed says that the transformed transition functions glue to a torsor under $G_{\mathbf d}$.
Conversely, any reduction to $G_{\mathbf d}$ gives such adapted local trivializations.
\end{proof}

\begin{remark}[Labeled and unlabeled subsystems]
\label{rem:labeled-unlabeled-segre}
If the tensor factors are labeled, one may replace $G_{\mathbf d}$ by the identity component of $G_{\mathbf d}$, namely the image of $\prod_i \PGL_{d_i}$.  If equal-dimensional factors may be
permuted, the correct group is the full scheme-theoretic stabilizer, or equivalently the relevant
normalizer extension.  In the latter case the reduction stack gives finite monodromy of
the labels. Passing only to invariant cohomology may lose finite descent information; the
quotient stack or normalizer group should be used.
\end{remark}

\subsection{The Brauer class and residual obstructions with trivial Brauer class}

The Brauer class
\[
  \beta(P)=\delta[P]\in H^2_{\rm fppf}(X,\Gm)_{\rm tors}
\]
gives a necessary abelian condition for the reduction problem above.  Proposition~\ref{prop:lcm-period-obstruction}
shows that a subsystem structure of type $\mathbf d$ forces
\[
  \operatorname{per}(\beta(P))\mid \operatorname{lcm}(d_1,\ldots,d_s),
\]
but the converse fails in general, and even $\beta(P)=0$ does not force
$\operatorname{Red}_{\mathbf d}(P)$ to be nonempty.

The examples over $\PP^1$ make this residual obstruction explicit.  If
$P\simeq \PP(E)$ and
\[
  E\simeq \bigoplus_{m=1}^4\cO_{\PP^1}(a_m),
  \qquad a_1\le a_2\le a_3\le a_4,
\]
then Corollary~\ref{cor:P1-22-criterion} gives
\[
  P \text{ reduces to }G_{(2,2)}
  \quad\Longleftrightarrow\quad
  a_1+a_4=a_2+a_3.
\]
Hence the integer
\[
  \Delta_{22}(P):=a_1+a_4-a_2-a_3
\]
is a splitting type obstruction with trivial Brauer class.  It is not a cohomology class; it is a splitting-type
defect measuring the failure of a split projective bundle to be projectively tensor-decomposable
of type $(2,2)$. In \cite{Ikeda:2026fen}, this gives an explicit reason for an entangling phase beyond the conventional Berry phase.

More generally, in the bipartite case on $\PP^1$, Theorem~\ref{thm:P1-bipartite-reducibility}
says that a splitting type $\{a_m\}$ is reducible of type $(d_A,d_B)$ if and only if it admits an
additive decomposition
\[
  \{a_m\}=
  \{b_i+c_j+t\mid 1\le i\le d_A,
                     1\le j\le d_B\}
\]
for integers $b_i,c_j,t$.  This criterion should be viewed as a computable invariant associated
with the nonabelian stack of reductions to $G_{\mathbf d}$ rather than as a replacement for it.

\subsection{Linear and determinant lifting stacks}

The reduction stack above is defined at the projective level.  Determinant invariants require a linear lift, or a stack
on which the corresponding determinant objects are defined.  Let
\[
  \varphi:K\longrightarrow \PGL_n
\]
be an algebraic group homomorphism.

\begin{definition}[$K$ lifting stack]
\label{def:determinant-lift-stack}
The stack of $K$ lifts of $P$ is
\[
  \mathfrak L_K(P):=X\times_{B\PGL_n}BK.
\]
The groupoid of global $K$ lifts is
\[
  \operatorname{Lift}_K(P):=\Gamma_X(\mathfrak L_K(P))
  \simeq \operatorname{Map}_{/B\PGL_n}(X,BK).
\]
\end{definition}

For $K=\GL_n$ and the central extension
\[
  1\longrightarrow \Gm\longrightarrow \GL_n\longrightarrow \PGL_n\longrightarrow 1,
\]
the lifting stack $\mathfrak L_{\GL_n}(P)$ is a $\Gm$ gerbe whose class is the Brauer class
$\beta(P)$.  It is neutral exactly when $P$ comes from a vector bundle.  If it is not neutral,
determinant objects may still be formulated on the lifting stack or in a twisted sheaf category,
but they are not ordinary line bundles on $X$.

A further distinction appears after a reduction to $G_{\mathbf d}$ has been chosen.  Suppose, for simplicity,
that we are in the labeled case and that
\[
  G_{\mathbf d}^{\circ}\simeq \prod_i \PGL_{d_i}.
\]
A reduction $\rho\in\operatorname{Red}_{\mathbf d}(P)$ gives factor projective torsors
$P_i\to X$.  Their Brauer classes satisfy
\[
  \beta(P)=\sum_i \beta(P_i).
\]
Here the sum is the Brauer class of the tensor product of the factor projective torsors.  Even if
$\beta(P)=0$, the individual $\beta(P_i)$ need not vanish.  Therefore a projective tensor
factorization is weaker than a linear tensor decomposition.

\begin{definition}[Factor lifting stack]
\label{def:factor-lift-stack}
For a labeled reduction $\rho$ with factor projective torsors $P_i$, define the factor
lifting stack
\[
  \mathfrak L_{\rm fac}(\rho)
  :=\mathfrak L_{\GL_{d_1}}(P_1)\times_X\cdots\times_X\mathfrak L_{\GL_{d_s}}(P_s),
  \qquad
  \mathfrak L_{\GL_{d_i}}(P_i):=X\times_{B\PGL_{d_i}}B\GL_{d_i}.
\]
The groupoid of factor lifts is
\[
  \operatorname{Lift}_{\rm fac}(\rho):=\Gamma_X(\mathfrak L_{\rm fac}(\rho)).
\]
Its nonemptiness is the condition that the projective factorization determined by $\rho$ comes
from vector bundles $E_i$ and hence from a linear tensor expression, up to a line twist.
\end{definition}

Thus, even when the Brauer class is trivial, two questions remain.
\[
  \operatorname{Red}_{\mathbf d}(P)\neq\varnothing
  \qquad\text{and}\qquad
  \operatorname{Lift}_{\rm fac}(\rho)\neq\varnothing.
\]
Over $\PP^1$ the second obstruction vanishes because $\Br(\PP^1)=0$, which is why the
classification of Theorem~\ref{thm:P1-bipartite-reducibility} reduces to splitting types.

\subsection{Incidence lifts and relative cohomology}

The following construction uses chosen determinant data.  This part is not
an invariant of the Azumaya algebra alone.  Fix
algebraic groups
\[
  L\subset R\subset K,
  \qquad K\longrightarrow \PGL_n,
\]
a $K$ lift $P_K\to X$ of $P$, and a reduction to $R$
\[
  \tau:X\longrightarrow P_K/R.
\]
Let $Q_R\to X$ be the principal $R$ bundle obtained by pulling back the tautological bundle $P_K\to P_K/R$ along $\tau$.
Let
\[
  f_R:X\longrightarrow BR,
  \qquad f_K:X\longrightarrow BK
\]
be the classifying maps of $Q_R$ and $P_K$.  The subgroup $L$ is an incidence subgroup: it is
the local subgroup against which a refined sector reduction is tested.  It is not assumed to be
a global reduction.

\begin{definition}[Incidence lift groupoid]
\label{def:incidence-lift-stack}
The groupoid of $L$ incidence lifts of $\tau$, equivalently of $f_R$, is
\[
  \operatorname{Lift}_L(\tau)
  :=\operatorname{hofib}_{f_R}
  \Bigl(\operatorname{Map}(X,BL)\longrightarrow \operatorname{Map}(X,BR)\Bigr).
\]
Equivalently,
\[
  \operatorname{Lift}_L(\tau)
  \simeq
  \Gamma_X\bigl(X\times_{P_K/R}P_K/L\bigr).
\]
\end{definition}

The incidence lifting problem is again nonabelian: it is the nonemptiness of
$\operatorname{Lift}_L(\tau)$.  Characteristic classes obtained from characters or representations
of $R$ are abelian classes associated with this lifting problem.  A nonzero such class obstructs an
$L$ lift, but the vanishing of all such classes need not construct a lift.

Let $E^\bullet$ be a cohomology theory for quotient stacks which is represented by a complex
of sheaves, or by a functor $R\Gamma(-,E)$ for which mapping cones are defined.  For the map
$Bi:BL\to BR$, define relative cohomology by the mapping cone
\[
  R\Gamma(BL\to BR,E)
  :=\operatorname{Cone}\!\bigl(R\Gamma(BR,E)\to R\Gamma(BL,E)\bigr)[-1],
\]
and set
\[
  \operatorname{Rel}^m_{R,L}(E):=E^m(BL\to BR).
\]
The long exact sequence is
\[
\cdots\to E^{m-1}(BR)\to E^{m-1}(BL)\to E^m(BL\to BR)
\to E^m(BR)\to E^m(BL)\to\cdots.
\]
Thus, in general,
\[
  E^m(BL\to BR)
  \not\cong
  \ker\bigl(E^m(BR)\to E^m(BL)\bigr).
\]
The kernel is the absolute image of the relative group:
\[
  E^m_{\rm abs}(R,L)
  :=\operatorname{im}\bigl(E^m(BL\to BR)\to E^m(BR)\bigr)
  =\ker\bigl(E^m(BR)\to E^m(BL)\bigr).
\]
The equality with the kernel follows from exactness, but it forgets the boundary trivialization
encoded by the full mapping-cone class.

There is also a version over the chosen lifted bundle:
\[
  \operatorname{Rel}^{m,{\rm univ}}_{R,L}(P_K;E)
  :=
  \operatorname{im}\bigl(
    E^m(BL\to BR)\longrightarrow E^m(P_K/L\to P_K/R)
  \bigr).
\]
This is the subgroup of relative classes on $P_K/L\to P_K/R$ obtained from the universal map
$BL\to BR$.
Thus a universal relative class $u\in E^m(BL\to BR)$ gives a full relative class on the incidence
map
\[
  f_{LR}:P_K/L\longrightarrow P_K/R,
  \qquad
  u_P\in E^m(f_{LR}).
\]
The class $u_P$ has an absolute image on $P_K/R$.  Pulling this image back by $\tau$ gives an
absolute class on $X$.  Removing ordinary classes induced from the unreduced $K$ bundle gives
the quotient on the fixed base $X$
\begin{equation}
\label{eq:quotient-on-X}
  \frac{
    f_R^*E^m_{\rm abs}(R,L)
  }{
    f_R^*E^m_{\rm abs}(R,L)
    \cap f_K^*E^m(BK)
  }.
\end{equation}
The quotient \eqref{eq:quotient-on-X} records only the absolute classes on $X$.  The full relative invariant is $u_P\in E^m(f_{LR})$, or its
image in $\operatorname{Rel}^{m,{\rm univ}}_{R,L}(P_K;E)$.

\subsection{Motivic, Chow, and \'{e}tale versions}

All motivic and Chow groups of quotient stacks are taken in the same approximation model fixed
at the beginning of this section.  For motivic cohomology we set
\[
  H^{p,q}_{\rm mot}(BL\to BR)
  :=\mathbb H^p\!\bigl(
       \operatorname{Cone}(R\Gamma(BR,\mathbb Z(q))
       \to R\Gamma(BL,\mathbb Z(q)))[-1]
     \bigr).
\]
The corresponding quotient on the fixed base is
\[
  \frac{
    f_R^*\operatorname{im}\bigl(
      H^{p,q}_{\rm mot}(BL\to BR)\to H^{p,q}_{\rm mot}(BR)
    \bigr)
  }{
    f_R^*\operatorname{im}(\cdots)\cap f_K^*H^{p,q}_{\rm mot}(BK)
  }.
\]
In particular, relative Chow groups are best regarded as the cycle part of relative motivic
cohomology:
\[
  CH^r_{\rm rel}(BL\to BR):=H^{2r,r}_{\rm mot}(BL\to BR).
\]
One should not replace this definition in higher codimension by the kernel of
$CH^r(BR)\to CH^r(BL)$ unless only the absolute class is desired.

For torsion and finite-component effects, fix a prime $\ell\ne \operatorname{char}(k)$ and use
\[
  H^p_{\acute{e}t}(BL\to BR,\mu_{\ell}^{\otimes q})
  :=\mathbb H^p\!\bigl(
      \operatorname{Cone}(R\Gamma_{\acute{e}t}(BR,\mu_{\ell}^{\otimes q})
      \to R\Gamma_{\acute{e}t}(BL,\mu_{\ell}^{\otimes q}))[-1]
    \bigr),
\]
or the $\ell$--adic inverse limit with coefficients $\mathbb Z_\ell(q)$.  This version retains finite normalizer monodromy and torsion descent data.

\subsection{Relative determinant lines and sector formulas}

The basic degree $(2,1)$ class comes from a character.  Let
\[
  \chi:R\longrightarrow \Gm,
  \qquad \chi|_L=1.
\]
The universal associated line bundle
\[
  \mathcal L^{\rm univ}_{\chi}
  :=[\mathbb A^1_{\chi}/R]\longrightarrow [\Spec k/R]=BR
\]
has a tautological trivialization $s_L$ after pullback to $BL$.  Here $i:BL\to BR$, and $\Pic(BL\to BR)$ denotes
the group of pairs $(\mathcal L,s)$, where $\mathcal L\in\Pic(BR)$ and
$s:i^*\mathcal L\simeq\mathcal O_{BL}$ is a specified trivialization.  Hence
\[
  (\mathcal L^{\rm univ}_{\chi},s_L)
  \in \Pic(BL\to BR),
\]
and its relative first Chern class is
\[
  c^{\rm rel}_{1,{\rm alg}}(\chi)
  :=c^{\rm rel}_1(\mathcal L^{\rm univ}_{\chi},s_L)
  \in H^{2,1}_{\rm mot}(BL\to BR).
\]
Pulling back to the chosen reduction gives the ordinary line bundle on $X$
\[
  \mathcal L_{\chi,X}:=Q_R\times^{R} \mathbb A^1_{\chi},
\]
and its absolute class
\[
  c^E_{1,{\rm alg}}(Q_R,L;\chi)
  :=c_1(\mathcal L_{\chi,X})
  = f_R^*\operatorname{abs}\bigl(c^{\rm rel}_{1,{\rm alg}}(\chi)\bigr)
  \in CH^1(X).
\]
If $\tau:X\to P_K/R$ lifts to a section $X\to P_K/L$, the tautological $L$ trivialization pulls back to a trivialization of
$\mathcal L_{\chi,X}$, and this class vanishes.  The converse need not hold.

Higher classes are most naturally defined from relative $K$-classes.  Let
\[
  [V,s_L]\in K^0(BL\to BR).
\]
Its Chern classes and Chern character give
\[
  c_i^{\rm rel}(V,s_L)\in H^{2i,i}_{\rm mot}(BL\to BR),
  \qquad
  \operatorname{ch}^{\rm rel}(V,s_L)\in H^{\rm even,*}_{\rm mot}(BL\to BR)\otimes\mathbb Q.
\]
This relative $K$-theoretic formulation is stable under representations, filtrations, exact sequences, and virtual differences.

For a sector decomposition
\[
  \mathcal E=\bigoplus_{a\in I}\mathcal E_a,
  \qquad n_a=\operatorname{rk}(\mathcal E_a),
\]
set
\[
  R=\prod_{a\in I}\GL_{n_a}.
\]
The corresponding principal $R$ bundle $Q_R$ is the product of the frame bundles of the $\mathcal E_a$.  Here
$I=\prod_{\alpha=1}^r I_\alpha$.  Determinant characters have the form
\[
  \chi_w((g_a)_a)=\prod_{a\in I}\det(g_a)^{w_a},
  \qquad w=(w_a)\in\mathbb Z^I.
\]
For the standard scalar-incidence subgroup, the condition $\chi_w|_L=1$ is the rank weighted
zero marginal condition
\[
  \sum_{a:a_\alpha=b} n_a w_a=0
  \qquad\text{for all }\alpha\text{ and }b\in I_\alpha.
\]
Equivalently,
\[
  \Lambda_E=\ker\!\bigl(M:\mathbb Z^I\to\bigoplus_\alpha\mathbb Z^{I_\alpha}\bigr),
  \qquad
  (Mw)_{\alpha,b}=\sum_{a:a_\alpha=b} n_a w_a.
\]
For $w\in\Lambda_E$ the determinant line and class are
\[
  \mathcal L_w=\bigotimes_{a\in I}(\det \mathcal E_a)^{\otimes w_a},
  \qquad
  c^E_{1,{\rm alg}}(w)=\sum_{a\in I}w_a\,c_1(\det \mathcal E_a).
\]
The superscript $E$ in $c^E_{1,{\rm alg}}$ stands for entangling and is unrelated to the cohomology theory $E^\bullet$.
In the binary line-sector case this gives the primitive two-body vector
\[
  w_{AB}=(1,-1,-1,1)
\]
and
\[
  c^E_{1,{\rm alg}}(w_{AB})
  =c_1(\mathcal E_{++})-c_1(\mathcal E_{+-})-c_1(\mathcal E_{-+})+c_1(\mathcal E_{--}).
\]

\subsection{Finite symmetries and quotient stacks}

Suppose a finite group $\Gamma$ acts by normalizer symmetries on the refined datum.  If the
action on $R$ and $L$ is strict, then
\[
  [BR/\Gamma]\simeq B(R\rtimes\Gamma),
  \qquad
  [BL/\Gamma]\simeq B(L\rtimes\Gamma),
\]
and the relative cohomology group is
\[
  E^m([BL/\Gamma]\to[BR/\Gamma]).
\]
If the symmetry is realized by a non-split normalizer extension, one should replace
$R\rtimes\Gamma$ and $L\rtimes\Gamma$ by the corresponding extension groups $N_R$ and
$N_L$, where $N_L\subset N_R$ is the subgroup preserving $L$, and use the map
$BN_L\to BN_R$.  In applications these groups are often subgroups of a normalizer inside $K$.
In either formulation the correct unlabeled object is not obtained
merely by taking the invariant sublattice $\Lambda_E^\Gamma$.  The quotient-stack or normalizer
formulation retains finite equivariant and torsion data that may disappear on a coarse quotient.

\subsection{Algebraic obstruction data}

The definition gives the following obstruction data.

\begin{definition}[Algebraic obstruction data]
\label{def:algebraic-obstruction-data}
Let $\mathcal A$ be a degree-$n$ Azumaya algebra on $X$, let $P\to X$ be its associated
torsor under $\PGL_n$, and fix a factorization type $\mathbf d$.
\begin{enumerate}
\item The first datum is the stack of reductions to $G_{\mathbf d}$,
\[
  \operatorname{Red}_{\mathbf d}(P)
  =\operatorname{Map}_{/B\PGL_n}(X,BG_{\mathbf d}).
\]
The projective obstruction is the possible emptiness of this stack.
\item Before choosing $P_K$, the passage from projective data to linear data for determinant classes is
measured by the lifting stack
\[
  \mathfrak L_K(P)=X\times_{B\PGL_n}BK.
\]
When a global section has been chosen, we denote the resulting $K$ lift by $P_K$.
\item After choosing $\rho\in\operatorname{Red}_{\mathbf d}(P)$, the factor lifting problem is
\[
  \operatorname{Lift}_{\rm fac}(\rho)=\Gamma_X(\mathfrak L_{\rm fac}(\rho)).
\]
\item For determinant data $L\subset R\subset K$, a $K$ lift $P_K$, and a reduction to $R$
represented by $Q_R$, the incidence problem is
\[
  \operatorname{Lift}_L(\tau)
  =\operatorname{hofib}_{f_R}
  \bigl(\operatorname{Map}(X,BL)\to\operatorname{Map}(X,BR)\bigr).
\]
\item The cohomological part consists of the relative classes
$\operatorname{Rel}^{\bullet,{\rm univ}}_{R,L}(P_K;E)$ and the quotient on $X$ in
\eqref{eq:quotient-on-X}.
\end{enumerate}
This collection is the algebraic obstruction data associated with the chosen $\rho$, $P_K$,
$Q_R$, and cohomology theory $E^\bullet$.
\end{definition}

Here it is understood that the factor lift term is defined only after choosing a labeled reduction
$\rho$.  The lifting stack $\mathfrak L_K(P)$ is a gerbe only in the central-extension cases
considered above.  Once $P_K$ and $Q_R$ have been fixed, the last terms are invariants of the
chosen determinant data and are not invariants of $\mathcal A$ alone.  The relative group
$\operatorname{Rel}^{\bullet,{\rm univ}}_{R,L}(P_K;E)$ retains the mapping-cone information on the incidence map,
whereas \eqref{eq:quotient-on-X} is the absolute part on $X$ after ordinary classes
from $BK$ have been removed.

After analytification, once a connection on the lifted bundle has been chosen, the relative
determinant classes above have the usual differential refinements by line bundles with connection
and boundary trivialization.  We shall not use these refinements in the algebraic arguments of
this paper.

\section{Numerical invariants of Schmidt-rank loci}
\label{sec:numerics}

Throughout this section we are in the bipartite case $\mathbf d=(d_A,d_B)$ with $n=d_A d_B$.
Let
\[
  R_{\le r}\subset \mathbb P(\,k^{d_A}\otimes k^{d_B}\,)
\]
denote the classical determinantal variety of tensors (matrices) of rank $\le r$.

For each geometric point $x\to X$, choose an identification $SB(\mathcal A)_x \simeq \PP^{n-1}$ and let
\[
  H_x \in \Pic\bigl(SB(\mathcal A)_x\bigr)\cong \mathbb{Z}
\]
denote the resulting hyperplane class, namely the class of $\cO_{\PP^{n-1}}(1)$. This class is independent of the chosen identification, since any two such identifications differ by an element of $\PGL_n$ and the $\PGL_n$--action preserves $\cO_{\PP^{n-1}}(1)$.

Throughout Sec.~\ref{sec:numerics}, whenever we speak of the degree or the Hilbert polynomial of a fiber $(\Sigma_{\le r})_x \subset SB(\mathcal A)_x$, it is always taken with respect to the standard fiberwise polarization $\cO_{SB(\mathcal A)_x}(1)$ (equivalently, with respect to $H_x$) and its restriction to $(\Sigma_{\le r})_x$. In the split model $R_{\le r}\subset \PP(V_A\otimes V_B)$, this agrees with the standard polarization coming from $\cO_{\PP(V_A\otimes V_B)}(1)$.

\begin{proposition}[Basic dimension data]
\label{prop:det-dim}
For $1\le r\le \min(d_A,d_B)$ the determinantal variety $R_{\le r}$ has
\[
  \mathrm{codim}_{\mathbb P^{d_A d_B-1}}(R_{\le r})=(d_A-r)(d_B-r),
\]
equivalently
\[
  \dim(R_{\le r}) = r(d_A+d_B-r)-1.
\]
\end{proposition}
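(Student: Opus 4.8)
The plan is to obtain $\dim(R_{\le r})$ from the incidence resolution already available in Proposition~\ref{prop:incidence-resolution-properties}, and then to read off the codimension by a one-line arithmetic identity. The two displayed formulas are equivalent because
\[
  \codim_{\PP^{d_Ad_B-1}}(R_{\le r}) \;=\; (d_Ad_B-1)-\dim(R_{\le r})
\]
and $d_Ad_B - \bigl(r(d_A+d_B-r)\bigr) = (d_A-r)(d_B-r)$. So it suffices to prove $\dim(R_{\le r}) = r(d_A+d_B-r)-1$.

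I would argue via the resolution $\rho_r\colon \widetilde{R}_{\le r}\to R_{\le r}$ of Definition~\ref{def:incidence-resolution}, with $\widetilde{R}_{\le r}=\PP(\mathcal U_A\boxtimes\mathcal U_B)\to \Gr_A(r)\times\Gr_B(r)$. The space $\widetilde{R}_{\le r}$ is irreducible, being a projective bundle over the irreducible smooth base $\Gr_A(r)\times\Gr_B(r)$, with fibre $\PP^{r^2-1}$ since $\mathrm{rk}(\mathcal U_A\boxtimes\mathcal U_B)=r^2$. Hence
\[
  \dim\widetilde{R}_{\le r}
  \;=\; \dim\Gr(r,V_A)+\dim\Gr(r,V_B)+(r^2-1)
  \;=\; r(d_A-r)+r(d_B-r)+r^2-1
  \;=\; r(d_A+d_B-r)-1.
\]
Since $\rho_r$ is projective and birational onto $R_{\le r}$ (Proposition~\ref{prop:incidence-resolution-properties}), its image is both closed and dense, so it equals the (necessarily irreducible) $R_{\le r}$, and a birational morphism preserves dimension; thus $\dim(R_{\le r})=r(d_A+d_B-r)-1$. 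Feeding this into the identity above gives the codimension formula.

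As an independent cross-check one can argue affinely, bypassing the resolution: $R_{\le r}=\PP(Z_{\le r})$ for the cone $Z_{\le r}\subset\Mat_{d_A\times d_B}$ of rank-$\le r$ matrices, and $Z_{\le r}$ is the image of the surjection $\Mat_{d_A\times r}\times\Mat_{r\times d_B}\to Z_{\le r}$, $(A,B)\mapsto AB$, whose generic fibre (over the dense locus of rank-$r$ matrices) is a single free $\GL_r$-orbit of dimension $r^2$; the fibre-dimension theorem then gives $\dim Z_{\le r}=rd_A+rd_B-r^2$ and again $\dim R_{\le r}=r(d_A+d_B-r)-1$. In either approach there is no genuine obstacle: this is classical determinantal geometry, and the only point needing a word of care is the irreducibility of $R_{\le r}$—required so that ``dimension'' is unambiguous and so that a birational morphism computes it—which is immediate from its realization as the image of an irreducible variety. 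The boundary cases $r=\min(d_A,d_B)$, where $R_{\le r}$ may be all of $\PP^{n-1}$ and the codimension $(d_A-r)(d_B-r)$ degenerates to $0$, are covered by exactly the same formulas.
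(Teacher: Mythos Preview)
Your proof is correct. The paper's own proof is a two-line appeal to ``standard determinantal geometry'': it asserts that the affine cone $Z_{\le r}\subset\Mat_{d_A\times d_B}$ has codimension $(d_A-r)(d_B-r)$ and then projectivises. Your affine cross-check is exactly an explicit justification of that assertion (via the factorisation map $(A,B)\mapsto AB$ and generic-fibre dimension), so in that sense you have filled in what the paper leaves to references.

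Your \emph{primary} argument, however, takes a genuinely different route: you compute $\dim R_{\le r}$ by transporting dimension through the birational incidence resolution $\rho_r$ of Definition~\ref{def:incidence-resolution}/Proposition~\ref{prop:incidence-resolution-properties}, reading off $\dim\widetilde R_{\le r}$ as a projective bundle over $\Gr_A(r)\times\Gr_B(r)$. This is self-contained within the paper (no outside determinantal facts needed) and is logically non-circular, since the proof of Proposition~\ref{prop:incidence-resolution-properties} does not invoke the dimension formula. The trade-off is that your argument imports a slightly heavier apparatus (smoothness and birationality of $\rho_r$, irreducibility of $R_{\le r}$) for what the paper treats as a textbook input; on the other hand, it makes the statement internal to the toolkit already developed in Section~\ref{sec:schmidt}, which is a pleasant consistency check.
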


\begin{proof}
Standard determinantal geometry: the affine cone is the rank $\le r$ locus in $\Mat_{d_A\times d_B}$, which has codimension $(d_A-r)(d_B-r)$.
Projectivizing decreases dimension by $1$.
\end{proof}

\begin{proposition}[Degree of the determinantal locus]
\label{prop:det-degree}
Assume $d_A\le d_B$ for definiteness. The projective degree of $R_{\le r}$ in $\mathbb P^{d_A d_B-1}$ is
\[
  \deg(R_{\le r})\;=\;
  \prod_{i=0}^{d_A-r-1}
  \frac{(d_B+i)!\, i!}{(r+i)!\, (d_B-r+i)!}.
\]
In particular, for $r=1$ (the product-state locus) one recovers $\deg(\mathbb P^{d_A-1}\times \mathbb P^{d_B-1})
=\binom{d_A+d_B-2}{d_A-1}$.
\end{proposition}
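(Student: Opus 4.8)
The plan is to recognize Proposition~\ref{prop:det-degree} as the classical Giambelli degree formula for a generic determinantal variety and to derive it by exhibiting $R_{\le r}$ as a degeneracy locus on $\PP^{N-1}$ (with $N=d_Ad_B$), applying the Thom--Porteous formula, and then evaluating the resulting binomial determinant. Concretely, writing $V_A=k^{d_A}$, $V_B=k^{d_B}$ and $\PP^{N-1}=\PP(V_A\otimes V_B)$, the tautological inclusion $\mathcal O_{\PP^{N-1}}(-1)\hookrightarrow (V_A\otimes V_B)\otimes\mathcal O$, composed with the identification $V_A\otimes V_B\cong\Hom(V_A^\vee,V_B)$, produces a ``flattening'' map of bundles
\[
  \varphi:\ V_A^\vee\otimes\mathcal O_{\PP^{N-1}}(-1)\ \longrightarrow\ V_B\otimes\mathcal O_{\PP^{N-1}},
\]
whose rank at $[\psi]$ equals the matrix (Schmidt) rank of $\psi$; thus $R_{\le r}$ is precisely the degeneracy locus $D_r(\varphi)=\{\operatorname{rank}\varphi\le r\}$, with the determinantal scheme structure cut out by the $(r+1)\times(r+1)$ minors. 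By Proposition~\ref{prop:det-dim} this locus has the expected codimension $(d_A-r)(d_B-r)$, and, being Cohen--Macaulay and generically reduced (resolved by the Eagon--Northcott complex; cf.\ the references already cited), its Thom--Porteous class coincides with its fundamental class $[R_{\le r}]$, of multiplicity one.

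Next I would compute the Chern data. Since $V_B$ and $V_A^\vee$ are trivial, the virtual bundle $\mathcal C:=[V_B\otimes\mathcal O]-[V_A^\vee\otimes\mathcal O(-1)]$ has total Chern class $c(\mathcal C)=(1-H)^{-d_A}=\sum_{m\ge 0}\binom{d_A-1+m}{m}H^m$, where $H=c_1(\mathcal O_{\PP^{N-1}}(1))$; that is, $c_m(\mathcal C)=\binom{d_A-1+m}{m}H^m$. The Giambelli--Thom--Porteous formula (in the form of Fulton's \emph{Intersection Theory}, Ch.~14, or \cite{bruns2006determinantal}) then gives
\[
  [R_{\le r}]
  \;=\;\det\!\Bigl(c_{\,d_B-r+j-i}(\mathcal C)\Bigr)_{1\le i,j\le d_A-r}
  \;=\;\det\!\Bigl(\tbinom{d_A+d_B-r-1+j-i}{d_B-r+j-i}\Bigr)_{1\le i,j\le d_A-r}\cdot H^{(d_A-r)(d_B-r)},
\]
since every monomial in the expansion of the determinant carries exactly the power $H^{(d_A-r)(d_B-r)}$. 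As the coefficient of $H^{(d_A-r)(d_B-r)}$ in $[R_{\le r}]$ is by definition $\deg(R_{\le r})$, the proposition reduces to the purely combinatorial identity
\[
  \det\!\Bigl(\tbinom{d_A+d_B-r-1+j-i}{d_B-r+j-i}\Bigr)_{1\le i,j\le d_A-r}
  \;=\;\prod_{i=0}^{d_A-r-1}\frac{(d_B+i)!\,i!}{(r+i)!\,(d_B-r+i)!}.
\]

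The last step is the only genuinely nontrivial one, and it is where the main work lies. I would evaluate this binomial determinant via the Lindström--Gessel--Viennot lemma: it enumerates $(d_A-r)$-tuples of non-intersecting lattice paths, equivalently semistandard Young tableaux of the rectangular shape $(d_B-r)^{\,d_A-r}$ with entries in $\{1,\dots,d_A\}$, and the hook--content (Weyl dimension) formula evaluates this count in closed form, telescoping to the stated product. Alternatively, one may simply invoke the classical Giambelli degree formula for generic determinantal varieties as recorded in \cite{bruns2006determinantal,Weyman_2003}, in which case this step collapses to a citation. Finally I would record the consistency check $r=1$: then $d_A-r=d_A-1$ and the product reads $\prod_{i=0}^{d_A-2}\frac{(d_B+i)!\,i!}{(1+i)!\,(d_B-1+i)!}$, which telescopes to $\binom{d_A+d_B-2}{d_A-1}$, recovering the classical degree of the Segre variety $\PP^{d_A-1}\times\PP^{d_B-1}$, consistent with Proposition~\ref{prop:det-dim}.
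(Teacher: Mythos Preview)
Your proposal is correct and follows exactly the approach the paper sketches: apply Thom--Porteous to the universal flattening map $\varphi:V_A^\vee\otimes\cO(-1)\to V_B\otimes\cO$ on $\PP^{N-1}$, read off the degree as the coefficient of $H^{(d_A-r)(d_B-r)}$, and identify the resulting binomial determinant with the classical Giambelli closed form. The paper's own argument is only a two-sentence sketch, so your write-up is in fact considerably more detailed than what appears there.
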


\begin{proof}
Set \(E=k^{d_A}\), \(F=k^{d_B}\), and identify the ambient projective space with
\(\PP(\Hom(E,F))\); this only fixes a duality convention for one tensor factor. Let
\(H=c_1(\cO_{\PP(\Hom(E,F))}(1))\). The tautological line
\(\cO_{\PP}(-1)\subset \Hom(E,F)\otimes \cO_{\PP}\) gives the universal map
\[
  \phi:E\otimes \cO_{\PP}(-1)\longrightarrow F\otimes \cO_{\PP}.
\]
Its rank \(\le r\) degeneracy locus is precisely \(R_{\le r}\). If \(r=d_A\), this locus is the
whole projective space and the displayed product is empty, hence equal to \(1\). We may therefore
assume \(r<d_A\).

Put
\[
  a=d_A-r,
  \qquad
  b=d_B-r,
  \qquad
  m=d_A.
\]
The expected codimension of \(D_r(\phi)\) is \(ab=(d_A-r)(d_B-r)\), which is the actual
codimension by Proposition~\ref{prop:det-dim}. Thom--Porteous therefore gives
\[
  [R_{\le r}]
  =
  \det\!\left(
    c_{\,b+j-i}\bigl((F\otimes\cO_{\PP})-(E\otimes\cO_{\PP}(-1))\bigr)
  \right)_{1\le i,j\le a}.
\]
Since \(F\otimes\cO_{\PP}\) is trivial and
\[
  c(E\otimes\cO_{\PP}(-1))=(1-H)^m,
\]
we have
\[
  c\bigl((F\otimes\cO_{\PP})-(E\otimes\cO_{\PP}(-1))\bigr)
  =(1-H)^{-m}
  =\sum_{q\ge 0}\binom{m+q-1}{q}H^q.
\]
Hence
\[
  [R_{\le r}]
  =
  \det\!\left(\binom{m+b+j-i-1}{b+j-i}\right)_{1\le i,j\le a}H^{ab}.
\]
The determinant is the Jacobi--Trudi determinant for the rectangular Schur polynomial
\(s_{(b^a)}(1^m)\), because \(h_q(1^m)=\binom{m+q-1}{q}\). By the hook--content formula,
\[
  s_{(b^a)}(1^m)
  =
  \prod_{p=1}^{a}\prod_{q=1}^{b}
  \frac{m+q-p}{a+b-p-q+1}.
\]
For each fixed \(p\), the numerator and denominator products are
\[
  \prod_{q=1}^{b}(m+q-p)=\frac{(m+b-p)!}{(m-p)!},
  \qquad
  \prod_{q=1}^{b}(a+b-p-q+1)=\frac{(a+b-p)!}{(a-p)!}.
\]
Thus
\[
  s_{(b^a)}(1^m)
  =
  \prod_{p=1}^{a}
  \frac{(m+b-p)!(a-p)!}{(m-p)!(a+b-p)!}.
\]
Substituting \(m=a+r\) and then writing \(i=a-p\) gives
\[
  s_{(b^a)}(1^m)
  =
  \prod_{i=0}^{a-1}
  \frac{(b+r+i)!\, i!}{(r+i)!\,(b+i)!}
  =
  \prod_{i=0}^{d_A-r-1}
  \frac{(d_B+i)!\, i!}{(r+i)!\,(d_B-r+i)!}.
\]
Finally, \([R_{\le r}]\neq 0\) has codimension \(ab\), and
\[
  \deg(R_{\le r})
  =
  \int_{\PP^{d_Ad_B-1}}[R_{\le r}]\,H^{\dim R_{\le r}}.
\]
Since \(ab+\dim R_{\le r}=d_Ad_B-1\) and \(\int_{\PP^{d_Ad_B-1}}H^{d_Ad_B-1}=1\), the coefficient above is exactly the projective degree. For \(r=1\), the product becomes
\[
  \prod_{i=0}^{d_A-2}\frac{d_B+i}{i+1}
  =\binom{d_A+d_B-2}{d_A-1},
\]
which is the usual degree of the Segre embedding.
\end{proof}

Let $S:=k[z_{ij}\mid 1\le i\le d_A,\ 1\le j\le d_B]$ and let $I_{r+1}$ be the ideal of $(r+1)\times(r+1)$ minors. Then $R_{\le r}=\Proj(S/I_{r+1})$.

\begin{proposition}[Representation-theoretic Hilbert series]
\label{prop:det-hilbert-series}
The graded coordinate ring $S/I_{r+1}$ has the following decomposition as a $\GL_{d_A}\times \GL_{d_B}$--representation:
\[
  (S/I_{r+1})_t
  \;\cong\;
  \bigoplus_{\substack{\lambda\vdash t\\ \ell(\lambda)\le r}}
  \mathbb S_\lambda(k^{d_A})\ \otimes\ \mathbb S_\lambda(k^{d_B}),
\]
where $\lambda$ ranges over partitions of $t$ with at most $r$ parts and $\mathbb S_\lambda$ denotes the Schur functor. Consequently, the Hilbert series is
\[
  \mathrm{Hilb}_{R_{\le r}}(z)
  \;=\;
  \sum_{\lambda:\,\ell(\lambda)\le r}
  \dim\mathbb S_\lambda(k^{d_A})\cdot \dim\mathbb S_\lambda(k^{d_B})\ z^{|\lambda|}.
\]
In particular, the Hilbert polynomial of $R_{\le r}$ is determined explicitly by $(d_A,d_B,r)$.
\end{proposition}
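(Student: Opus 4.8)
The plan is to deduce the decomposition from the Cauchy formula together with the first and second fundamental theorems of invariant theory for $\GL_r$, and then read off the Hilbert series by taking dimensions. First I would set up the representation theory: identify $S_t$ with $\mathrm{Sym}^t(k^{d_A}\otimes k^{d_B})$ (up to the harmless passage to duals, which in characteristic $0$ does not affect dimensions) and invoke the Cauchy decomposition $\mathrm{Sym}^t(k^{d_A}\otimes k^{d_B})\cong\bigoplus_{\lambda\vdash t}\mathbb{S}_\lambda(k^{d_A})\otimes\mathbb{S}_\lambda(k^{d_B})$, in which every $\GL_{d_A}\times\GL_{d_B}$-isotypic component occurs with multiplicity one (and automatically vanishes unless $\ell(\lambda)\le\min(d_A,d_B)$). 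Since the $(r+1)\times(r+1)$ minors span a $\GL_{d_A}\times\GL_{d_B}$-subrepresentation of $S_{r+1}$ (concretely $\Lambda^{r+1}k^{d_A}\otimes\Lambda^{r+1}k^{d_B}=\mathbb{S}_{1^{r+1}}\otimes\mathbb{S}_{1^{r+1}}$), the ideal $I_{r+1}$ is $\GL_{d_A}\times\GL_{d_B}$-stable, so by semisimplicity both $I_{r+1}$ and $S/I_{r+1}$ are sub-sums of the Cauchy decomposition; it remains only to identify which $\lambda$ survive in the quotient.

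For that I would use the standard parametrization $\phi\colon\Hom(k^r,k^{d_A})\times\Hom(k^{d_B},k^r)\to\Hom(k^{d_B},k^{d_A})$, $(\alpha,\beta)\mapsto\alpha\beta$, whose scheme-theoretic image is $\Spec(S/I_{r+1})$. The first fundamental theorem identifies $\phi^*$ with the inclusion of $S/\ker\phi^*$ as the ring of $\GL_r$-invariants on the source, and the second fundamental theorem gives $\ker\phi^*=I_{r+1}$ (this also makes $I_{r+1}$ prime, but we do not need that separately). Applying the Cauchy formula to each of the two polynomial rings on the source and computing $\GL_r$-invariants by Schur's lemma — the space $(\mathbb{S}_\mu(k^r)\otimes\mathbb{S}_\nu(k^r)^\vee)^{\GL_r}$ is one-dimensional when $\mu=\nu$ and zero otherwise — collapses the resulting double sum to $\bigoplus_{\mu:\ \ell(\mu)\le r}\mathbb{S}_\mu(k^{d_A})\otimes\mathbb{S}_\mu(k^{d_B})$, with the $\mu$-summand sitting in degree $|\mu|$. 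This is exactly the claimed decomposition of $(S/I_{r+1})_t$.

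The Hilbert-series statement is then immediate: taking dimensions of the degree-$t$ component yields $\sum_{\lambda\vdash t,\ \ell(\lambda)\le r}\dim\mathbb{S}_\lambda(k^{d_A})\cdot\dim\mathbb{S}_\lambda(k^{d_B})$, and summing over $t$ gives the stated series; since each $\dim\mathbb{S}_\lambda(k^d)$ is given by the Weyl dimension formula (a polynomial in the parts of $\lambda$) and, for $t$ large, only partitions with at most $r$ parts fitting in a $d_A\times d_B$ box contribute, the Hilbert function agrees with a polynomial in $t$ determined by $(d_A,d_B,r)$. The main obstacle is really just invoking the right classical inputs cleanly — the Cauchy decomposition and, crucially, the first and second fundamental theorems of invariant theory for $\GL_r$ (equivalently the straightening law of De Concini--Eisenbud--Procesi, cf.\ \cite{bruns2006determinantal,Weyman_2003}); once those are in place the remainder is bookkeeping. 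As a fallback avoiding invariant theory, one can argue directly that the ideal generated by $\mathbb{S}_{1^{r+1}}\otimes\mathbb{S}_{1^{r+1}}$ inside the Cauchy algebra equals $\bigoplus_{\ell(\lambda)\ge r+1}\mathbb{S}_\lambda\otimes\mathbb{S}_\lambda$ via Pieri's rule, and that no component with $\ell(\lambda)\le r$ lies in $I_{r+1}$ by specializing to a generic rank-$r$ matrix; this replaces the invariant-theoretic input with Littlewood--Richardson combinatorics.
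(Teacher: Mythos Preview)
The paper does not actually prove this proposition: it is stated as a classical fact (in the same spirit as Propositions~\ref{prop:det-dim} and~\ref{prop:det-degree}, with references to \cite{bruns2006determinantal,Weyman_2003} appearing elsewhere for determinantal input), and is followed immediately by a remark and then Theorem~\ref{thm:relative-numerics}. So there is no proof in the paper to compare against.

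Your argument is correct and is essentially the standard one. The Cauchy decomposition of $S=\mathrm{Sym}(k^{d_A}\otimes k^{d_B})$ is the right starting point, and the identification of $S/I_{r+1}$ with the $\GL_r$-invariants on $\Hom(k^r,k^{d_A})\times\Hom(k^{d_B},k^r)$ via the first and second fundamental theorems is exactly how this result is proved in the references the paper cites (Weyman's geometric approach in \cite{Weyman_2003} and the straightening-law approach in \cite{bruns2006determinantal}). The Schur's-lemma collapse of the double Cauchy sum to partitions with $\ell(\lambda)\le r$ is clean and correct. Your fallback via Pieri plus specialization to a generic rank-$r$ matrix is also valid and is closer in spirit to the combinatorial/standard-monomial treatment in Bruns--Vetter. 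One small cosmetic point: you might note that the grading on the invariant ring coming from the source is \emph{a priori} bigraded (by degree in each factor), and the identification with the grading on $S$ goes through because $\phi^*(z_{ij})$ has bidegree $(1,1)$; this is implicit in your ``the $\mu$-summand sits in degree $|\mu|$'' but worth one sentence.
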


\begin{remark}
For $r=1$ (product-state), one gets a closed formula:
\[
  h^0(\Sigma_{A,B},\mathcal O(t))
  = \binom{t+d_A-1}{d_A-1}\binom{t+d_B-1}{d_B-1},
\]
hence the Hilbert polynomial is the product of two binomials. For general $r$, the decomposition above gives an algorithm (via hook-length formula for $\dim\mathbb S_\lambda$) to compute the Hilbert polynomial.
\end{remark}

\begin{theorem}[Relative constancy of numerics for Schmidt-rank locus]
\label{thm:relative-numerics}
Let $(\mathcal A,P_{\mathbf d})$ be an Azumaya algebra on $X$ with a chosen bipartite type $\mathbf d=(d_A,d_B)$. For each $r$, the Schmidt-rank $\le r$ locus $\Sigma_{\le r}(\mathcal A,\mathbf d)\subset SB(\mathcal A)$ is flat over $X$, and for every $x\in X$ the fiber $(\Sigma_{\le r})_x$ is isomorphic to the classical determinantal variety $R_{\le r}$. Hence:
\begin{itemize}
\item $\dim((\Sigma_{\le r})_x)$ and $\codim((\Sigma_{\le r})_x\subset SB(\mathcal A)_x)$ are constant in $x$ and given by
Proposition~\ref{prop:det-dim};
\item the projective degree of each fiber is constant and given by Proposition~\ref{prop:det-degree};
\item the Hilbert polynomial of each fiber is constant and given by Proposition~\ref{prop:det-hilbert-series}.
\end{itemize}
\end{theorem}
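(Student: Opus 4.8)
The plan is to deduce Theorem~\ref{thm:relative-numerics} directly from the associated-bundle description of $\Sigma_{\le r}(\mathcal A,\mathbf d)$, the flatness and base-change statement already recorded in Proposition~\ref{prop:schmidt-descent}, and the classical computations in Propositions~\ref{prop:det-dim}, \ref{prop:det-degree}, and~\ref{prop:det-hilbert-series}. The only substantive content beyond those inputs is the observation that the fiberwise polarization is the canonical one, so that ``projective degree'' and ``Hilbert polynomial'' of a fiber literally mean the classical invariants of $R_{\le r}\subset \PP^{n-1}$.

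First I would recall that $\Sigma_{\le r}(\mathcal A,\mathbf d)=P_{\mathbf d}\times^{G_{\mathbf d}}R_{\le r}$ (Definition~\ref{def:relative-schmidt}), so by Proposition~\ref{prop:schmidt-descent} the morphism $\pi_r\colon \Sigma_{\le r}(\mathcal A,\mathbf d)\to X$ is flat and compatible with arbitrary base change. Applying base change along $\Spec\kappa(x)\to X$ (or along a geometric point) identifies the scheme-theoretic fiber $(\Sigma_{\le r})_x$ with $\Sigma_{\le r}(\mathcal A_x,\mathbf d)$, where $\mathcal A_x$ carries the restricted subsystem structure. Since $G_{\mathbf d}$ is smooth in characteristic $0$, the $G_{\mathbf d}$-torsor $P_{\mathbf d}$ trivialises over an \'etale neighbourhood of $x$, and any such trivialisation induces an isomorphism $(\Sigma_{\le r})_x \cong R_{\le r}$ of $\kappa(x)$-schemes; in particular the fiber is reduced, irreducible, and of the classical determinantal type.

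Next I would check that this identification is an isomorphism of \emph{polarized} schemes. The line bundle $\cO_{SB(\mathcal A)}(1)$ restricts on each fiber $SB(\mathcal A)_x\cong\PP^{n-1}$ to the canonical hyperplane class $H_x$, which is independent of the chosen trivialisation because $\PGL_n$ — hence $G_{\mathbf d}\subset\PGL_n$ — preserves $\cO_{\PP^{n-1}}(1)$; this is exactly the remark preceding the theorem. Consequently $\cO_{SB(\mathcal A)}(1)|_{(\Sigma_{\le r})_x}$ corresponds under the above isomorphism to $\cO_{\PP^{n-1}}(1)|_{R_{\le r}}$, i.e.\ to the polarization with respect to which Propositions~\ref{prop:det-dim}, \ref{prop:det-degree}, and~\ref{prop:det-hilbert-series} are stated. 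Feeding the polarized isomorphism $(\Sigma_{\le r})_x\cong (R_{\le r},\,\cO(1)|_{R_{\le r}})$ into those three propositions yields the stated formulas for $\dim((\Sigma_{\le r})_x)$, for $\codim((\Sigma_{\le r})_x\subset SB(\mathcal A)_x)$, for the projective degree, and for the Hilbert polynomial; and since each right-hand side depends only on $(d_A,d_B,r)$ and not on $x$, all three invariants are constant on $X$. (Constancy of the Hilbert polynomial along connected components would also follow from flatness and properness of $\pi_r$, as in Corollary~\ref{cor:twisted_degree_pushforward}, but the fiberwise-isomorphism statement makes it automatic on all of $X$.)

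The argument has no serious obstacle: the one point requiring care — and the only place where the twisting of $\mathcal A$ could in principle interfere — is the compatibility of the fiberwise polarization with the classical one, i.e.\ the fact that $\cO_{SB(\mathcal A)}(1)$ restricts fiberwise to the standard $\cO(1)$ rather than to some twist of it by a class pulled back from $X$. This is precisely what the canonicity of $H_x$ recalled above guarantees, so the remaining steps are bookkeeping with associated bundles together with direct citation of the determinantal formulas.
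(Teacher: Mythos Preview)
Your proposal is correct and follows essentially the same approach as the paper: both use the associated-bundle description $\Sigma_{\le r}(\mathcal A,\mathbf d)=P_{\mathbf d}\times^{G_{\mathbf d}}R_{\le r}$ to identify the fibers with $R_{\le r}$, cite flatness, and then read off the numerical invariants from the classical propositions. Your version is more careful than the paper's three-sentence proof in that you explicitly verify the polarization compatibility (that the fiberwise $\cO(1)$ matches the standard $\cO_{\PP^{n-1}}(1)|_{R_{\le r}}$), which the paper handles only via the standing convention stated at the beginning of the section; one small caution is that a global $\cO_{SB(\mathcal A)}(1)$ need not exist on a twisted Severi--Brauer scheme, so the polarization should be understood fiberwise (as the paper's convention makes clear), but your argument goes through unchanged at that level.
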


\begin{proof}
By construction, $\Sigma_{\le r}(\mathcal A,\mathbf d)$ is the associated bundle $P_{\mathbf d}\times^{G_{\mathbf d}} R_{\le r}$, hence is locally (fppf) the product $R_{\le r}\times U$. Flatness and fiber identification follow. Numerical invariants are then fiberwise identical to those of $R_{\le r}$.
\end{proof}

\appendix
\section{A spin chain example on a torus}
\label{sec:spin-chain-toy}

We give a spin chain example on a torus. On each simply connected chart the ground state is a product state for a chosen local bipartite splitting. After analytic continuation around one coordinate circle, the transition element leaves $G_{(2,2)}$. Thus the local loci of product states do not glue to a global relative Segre subscheme. Further detailed studies beyond this simple example are given in \cite{Ikeda:2026fen}.

\subsection{Parameter space and the one-magnon Hilbert space}
Let
\[
X := (\mathbb{C}^\times)^2, \qquad (u,v)\in X.
\]
For a Hermitian (physical) family one may restrict to the real two-torus
\[
X_{\mathrm{phys}} := (S^1)^2 \subset (\mathbb{C}^\times)^2,
\qquad u=e^{i\theta_u},\ v=e^{i\theta_v}.
\]
We focus on the case \(p=2\), hence \(m=p^2=4\) and \(\zeta:=e^{2\pi i/m}=i\).

Consider a periodic spin-\(\tfrac12\) chain with four sites \(r\in\mathbb{Z}/4\mathbb{Z}\). Let \(\sigma_r^\pm\) denote the raising/lowering operators at site \(r\), and let
\[
n_r := \frac{1-\sigma_r^z}{2}
\]
be the down-spin (magnon) number operator.

Let \(|\!\uparrow\uparrow\uparrow\uparrow\rangle\) be the fully polarized state, and define the
one-magnon basis
\[
|r\rangle := \sigma_r^-|\!\uparrow\uparrow\uparrow\uparrow\rangle,
\qquad r\in\mathbb{Z}/4\mathbb{Z}.
\]
The one-magnon subspace
\[
\mathcal{H} := \mathrm{span}\{|0\rangle,|1\rangle,|2\rangle,|3\rangle\} \cong \mathbb{C}^4
\]
is preserved by any Hamiltonian conserving total magnon number.

On \(\mathcal{H}\) consider the operators \eqref{eq:gate}
\begin{equation}
\label{eq:Weyl}
{\mathbf X}|r\rangle = |r+1\rangle,\qquad
{\mathbf Z}|r\rangle = \zeta^{\,r}|r\rangle \quad(\zeta=i),
\end{equation}
so that \(\mathbf{ZX}=\zeta \mathbf{XZ}\).
(Physically, \(\mathbf{X}\) is the lattice translation restricted to \(\mathcal{H}\), while \(\mathbf{Z}\) can be implemented by a site-dependent phase gate by controlling a local magnetic field.)

\subsection{A local Hamiltonian and its explicit ground state}

Fix constants \(J>0\) and \(\Delta>J\). On any simply connected chart \(U\subset X^{\mathrm{an}}\) choose an analytic branch \(u^{1/4}\). Define a local, strictly short-range spin Hamiltonian on the full spin Hilbert space by
\begin{equation}
\label{eq:localH-U}
H_U(u,v)
:=
-J\Big(u^{-1/4}\,\sigma_0^+\sigma_1^- + u^{1/4}\,\sigma_0^-\sigma_1^+\Big)
\;+\;\Delta\,(n_2+n_3).
\end{equation}
This Hamiltonian preserves magnon number. In particular it restricts to an operator on \(\mathcal{H}\). In the basis \(\{|0\rangle,|1\rangle,|2\rangle,|3\rangle\}\) one has
\begin{equation}
\label{eq:matrixHU}
H_U\big|_{\mathcal{H}}
=
\begin{pmatrix}
0 & -J u^{1/4} & 0 & 0\\
-J u^{-1/4} & 0 & 0 & 0\\
0 & 0 & \Delta & 0\\
0 & 0 & 0 & \Delta
\end{pmatrix}.
\end{equation}

\begin{proposition}[Local ground state on a chart]
\label{prop:localGS}
Assume \(\Delta>J>0\) and \(|u|=1\) (so \(|u^{1/4}|=1\)). Then \(H_U|_{\mathcal{H}}\) has eigenvalues \(-J,+J,\Delta,\Delta\), and its unique ground state (energy \(-J\)) is
\begin{equation}
\label{eq:GSU}
|\mathrm{GS}\rangle_U=
\frac{1}{\sqrt{2}}\Big(u^{1/4}|0\rangle + |1\rangle\Big).
\end{equation}
Moreover, since the fully polarized state has energy \(0\) and all states with magnons on sites \(2,3\) cost at least \(\Delta>J\), this is also the ground state of the full Hamiltonian \(H_U\).
\end{proposition}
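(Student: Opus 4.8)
The plan is to reduce the spectral statement to a two-by-two computation and then promote the resulting ground state to the full spin Hilbert space by a magnon-sector argument.

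First I would use the block structure of \eqref{eq:matrixHU}: in the ordered basis $\{|0\rangle,|1\rangle,|2\rangle,|3\rangle\}$ the operator $H_U|_{\mathcal H}$ is block diagonal, with a two-by-two block $M(u):=\begin{pmatrix}0 & -Ju^{1/4}\\ -Ju^{-1/4} & 0\end{pmatrix}$ on $\operatorname{span}\{|0\rangle,|1\rangle\}$ and the scalar block $\Delta\cdot I_2$ on $\operatorname{span}\{|2\rangle,|3\rangle\}$. Under the hypothesis $|u|=1$ one has $u^{-1/4}=\overline{u^{1/4}}$ for the chosen analytic branch, so $M(u)$ is Hermitian and $H_U|_{\mathcal H}$ has real spectrum. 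Then I would compute $\operatorname{tr}M(u)=0$ and $\det M(u)=-J^2\,u^{1/4}u^{-1/4}=-J^2$, so the eigenvalues of $M(u)$ are $\pm J$; solving $M(u)\binom{a}{b}=-J\binom{a}{b}$ gives $a=u^{1/4}b$, and since $|u^{1/4}|=1$ the normalized $(-J)$-eigenvector is $\tfrac1{\sqrt2}\bigl(u^{1/4}|0\rangle+|1\rangle\bigr)$, which is exactly \eqref{eq:GSU}. Combining the two blocks, the spectrum of $H_U|_{\mathcal H}$ is $\{-J,\,+J,\,\Delta,\,\Delta\}$; because $\Delta>J>0$, the value $-J$ is strictly the smallest and occurs with multiplicity one, so $|\mathrm{GS}\rangle_U$ is the unique ground state inside $\mathcal H$.

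Next I would upgrade this to the full spin-$\tfrac12$ space $(\mathbb C^2)^{\otimes 4}$. Since $H_U$ commutes with the total magnon number $N:=\sum_r n_r$, it is enough to bound $H_U$ on each sector. On the zero-magnon sector (the fully polarized state) both the hopping term and $n_2+n_3$ annihilate the state, giving energy $0>-J$. On the one-magnon sector $\mathcal H$ the previous paragraph gives minimum $-J$. For $N\ge 2$ I would write $H_U=H_{01}+\Delta(n_2+n_3)$ with $H_{01}:=-J\bigl(u^{-1/4}\sigma_0^+\sigma_1^-+u^{1/4}\sigma_0^-\sigma_1^+\bigr)$ acting only on sites $0,1$; a direct check on the four-dimensional two-site factor shows $H_{01}$ has eigenvalues $\{0,0,+J,-J\}$, hence $H_{01}$ is bounded below by $-J$ as an operator, while $\Delta(n_2+n_3)$ is nonnegative, so $H_U$ is bounded below by $-J$ on the whole space. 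Equality forces the state to lie in the $(-J)$-eigenline of $H_{01}$ at sites $0,1$ and to carry no magnons at sites $2,3$, which singles out $|\mathrm{GS}\rangle_U$ up to a scalar; hence $|\mathrm{GS}\rangle_U$ is the unique global ground state, with energy $-J$.

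The computation is entirely elementary, so the only real point of care is the bound in the last step \emph{across all magnon sectors}: one must verify that the purely local two-site operator $H_{01}$ is bounded below by $-J$ on its own factor and that the inequality $H_U\ge -J$ can be saturated only by the stated product state. This is exactly the structural feature ``$H_U=(\text{two-site term bounded below})+(\text{nonnegative penalty})$'' that makes the global ground state explicit and, crucially, locally a product state, which is the property needed for the gluing/monodromy discussion that follows.
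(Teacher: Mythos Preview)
Your proof is correct and follows the same approach as the paper's: diagonalize the $2\times2$ block to get eigenvalues $\pm J$ and the explicit eigenvector, then compare energies across magnon sectors. Your treatment of the full Hamiltonian is in fact more careful than the paper's one-line ``comparing energies in different magnon-number sectors,'' since you make the global bound $H_U=H_{01}+\Delta(n_2+n_3)\ge -J$ explicit and identify the equality case via the commuting-factor decomposition.
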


\begin{proof}
The \(2\times 2\) block on \(\mathrm{span}\{|0\rangle,|1\rangle\}\) has eigenvalues \(\pm J\). The remaining basis states \(|2\rangle,|3\rangle\) have energy \(\Delta\). Since \(\Delta>J\), the lowest eigenvalue is \(-J\), and a direct eigenvector computation yields \eqref{eq:GSU}. The final statement follows by comparing energies in different magnon-number sectors.
\end{proof}

\subsection{Gluing by projective conjugation and entangling transport}

The Hamiltonian \eqref{eq:localH-U} is written in a \emph{local coordinate/branch}. To define a globally twisted family (a family defined only up to fiberwise conjugation), we specify its gluing on overlaps.

Let \(T\) denote the unitary translation operator on the spin chain, characterized by \(T\sigma_r^\pm T^{-1}=\sigma_{r+1}^\pm\) (indices mod \(4\)). Its restriction to \(\mathcal{H}\) is exactly the Weyl shift \(X\) in \eqref{eq:Weyl}. Define, on another simply connected chart \(U'\subset X^{\mathrm{an}}\), the Hamiltonian by conjugation:
\begin{equation}
\label{eq:localH-Up}
H_{U'}(u,v) := T^{-1}H_U(u,v)\,T.
\end{equation}
In spin operators this is still strictly local:
\begin{equation}
\label{eq:localH-Up-explicit}
H_{U'}(u,v)=
-J\Big(u^{-1/4}\,\sigma_3^+\sigma_0^- + u^{1/4}\,\sigma_3^-\sigma_0^+\Big)\;+\;\Delta\,(n_1+n_2).
\end{equation}
By construction, on \(U\cap U'\) the two local descriptions are related by a gauge change (conjugation) in the projective group.

\begin{proposition}[Ground-state gluing]
\label{prop:GSglue}
On \(U\cap U'\), the (projective) ground state lines glue via
\[
[\mathrm{GS}]_{U'} = [\mathbf{X}^{-1}\mathrm{GS}]_{U}\ \in\ \mathbb{P}(\mathcal{H}),
\]
hence
\begin{equation}
\label{eq:GSUprime}
|\mathrm{GS}\rangle_{U'}
=
\mathbf{X}^{-1}|\mathrm{GS}\rangle_{U}
=
\frac{1}{\sqrt{2}}\Big(u^{1/4}|3\rangle + |0\rangle\Big).
\end{equation}

\end{proposition}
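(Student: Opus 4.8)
The plan is to obtain the gluing formula as an immediate consequence of the definition \eqref{eq:localH-Up} of $H_{U'}$ by conjugation, combined with the explicit ground state computed in Proposition~\ref{prop:localGS}. The only structural input needed is that $T$ is unitary, commutes with the total magnon number, and restricts on the one-magnon subspace $\mathcal H$ to the Weyl shift $\mathbf X$ of \eqref{eq:Weyl}; everything else is a short direct calculation.

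First I would argue at the level of eigenspaces. Fix $(u,v)$ in the overlap $U\cap U'$ (so in particular $|u|=1$, as in Proposition~\ref{prop:localGS}), and let $|\mathrm{GS}\rangle_U$ be the unique ground state of $H_U$, with energy $-J$. Since $H_{U'}=T^{-1}H_UT$ and $T$ is invertible, $T^{-1}|\mathrm{GS}\rangle_U$ is an eigenvector of $H_{U'}$ with the same eigenvalue $-J$; it still lies in the one-magnon sector because $T$ preserves magnon number. Moreover $H_{U'}|_{\mathcal H}$ is unitarily conjugate to $H_U|_{\mathcal H}$, hence has the same spectrum $\{-J,+J,\Delta,\Delta\}$, so its $(-J)$-eigenspace is one-dimensional. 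Therefore $T^{-1}|\mathrm{GS}\rangle_U$ spans the ground line of $H_{U'}$, which is exactly the projective identity $[\mathrm{GS}]_{U'}=[\mathbf X^{-1}\mathrm{GS}]_U$ in $\mathbb P(\mathcal H)$.

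Next I would make the right-hand side explicit. On $\mathcal H$ one has $\mathbf X^{-1}|r\rangle=|r-1\rangle$ with $r$ read modulo $4$, so $\mathbf X^{-1}|0\rangle=|3\rangle$ and $\mathbf X^{-1}|1\rangle=|0\rangle$. Applying $\mathbf X^{-1}$ to the normalized vector \eqref{eq:GSU} then yields $\tfrac{1}{\sqrt2}\bigl(u^{1/4}|3\rangle+|0\rangle\bigr)$, which is \eqref{eq:GSUprime}. As an independent consistency check one can instead diagonalize the $2\times2$ block of $H_{U'}|_{\mathcal H}$ supported on $\mathrm{span}\{|3\rangle,|0\rangle\}$ read off from \eqref{eq:localH-Up-explicit}, repeating the computation in the proof of Proposition~\ref{prop:localGS}; it produces the same ground line.

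There is no genuinely hard step here: the only point demanding care is the index bookkeeping modulo $4$, together with keeping track that the transition operator on $\mathcal H$ is $\mathbf X$ while the ground-state line transforms by its inverse $\mathbf X^{-1}$ (indeed if $H'=g^{-1}Hg$ and $H\psi=-J\psi$ then $H'(g^{-1}\psi)=-J(g^{-1}\psi)$), which is precisely what turns the pair $|0\rangle,|1\rangle$ into $|3\rangle,|0\rangle$. Conceptually the statement simply records that the two local ground-state rays differ by the projective transition element $[\mathbf X^{-1}]\in\PGL(\mathcal H)$ featuring in the monodromy analysis of \S\ref{sec:entangling_monodromy}.
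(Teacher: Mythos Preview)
Your proof is correct and follows essentially the same approach as the paper: the paper's proof simply observes that $H_{U'}=\mathbf X^{-1}H_U\mathbf X$ on $\mathcal H$ so eigenlines are transported by $\mathbf X^{-1}$, and then applies $\mathbf X^{-1}$ to \eqref{eq:GSU} to obtain \eqref{eq:GSUprime}. Your version spells out the same argument with additional care (uniqueness of the ground line via the spectrum, magnon-number preservation, and an optional consistency check), but the underlying idea is identical.
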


\begin{proof}
Since \(H_{U'}=\mathbf{X}^{-1}H_U \mathbf{X}\) on \(\mathcal{H}\), eigenlines are transported by \(\mathbf{X}^{-1}\). Applying \(\mathbf{X}^{-1}\) to \eqref{eq:GSU} yields \eqref{eq:GSUprime}.
\end{proof}

\subsection{Local bipartite splitting and emergence of entanglement}

On each simply connected chart we may impose a \emph{local} bipartite identification \(\mathcal{H}\cong \mathbb{C}^2\otimes\mathbb{C}^2\) by writing
\[
r=a+2b,\qquad a,b\in\{0,1\},
\]
and declaring
\[
|a\rangle_A\otimes|b\rangle_B \ \longleftrightarrow\ |r=a+2b\rangle.
\]
Equivalently,
\[
|0\rangle=|00\rangle,\quad |1\rangle=|10\rangle,\quad |2\rangle=|01\rangle,\quad |3\rangle=|11\rangle.
\]

Below, the Schmidt rank is taken with respect to the local identification $\mathcal{H}\cong\mathbb{C}^2\otimes\mathbb{C}^2$ introduced above (not the physical tensor product $(\mathbb{C}^2)^{\otimes 4}$ of the full spin chain).
\begin{proposition}[Product locally, entangled after gluing]
\label{prop:product-vs-entangled}
With respect to the above local splitting \(\mathcal{H}\cong\mathbb{C}^2\otimes\mathbb{C}^2\), the local ground state \(|\mathrm{GS}\rangle_U\) is a product state (Schmidt rank \(1\)), whereas the glued ground state \(|\mathrm{GS}\rangle_{U'}\) is entangled (Schmidt rank \(2\)). More explicitly,
\begin{align}
|\mathrm{GS}\rangle_U
&=
\frac{1}{\sqrt{2}}\Big(u^{1/4}|00\rangle+|10\rangle\Big)
=
\Big(\tfrac{u^{1/4}|0\rangle_A+|1\rangle_A}{\sqrt{2}}\Big)\otimes |0\rangle_B,
\\[3pt]
|\mathrm{GS}\rangle_{U'}
&=
\frac{1}{\sqrt{2}}\Big(|00\rangle+u^{1/4}|11\rangle\Big),
\end{align}
and the latter has Schmidt rank \(2\). In particular, if \(u=1\) then \(|\mathrm{GS}\rangle_{U'}\propto |00\rangle+|11\rangle\) is a Bell state.
\end{proposition}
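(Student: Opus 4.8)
The plan is to trace through the explicit ground-state formulas already established and reduce everything to the computation of the rank of a $2\times 2$ coefficient matrix. First I would invoke Proposition~\ref{prop:localGS}, which gives $|\mathrm{GS}\rangle_U=\tfrac{1}{\sqrt 2}\bigl(u^{1/4}|0\rangle+|1\rangle\bigr)$, and substitute the local bipartite identification $|0\rangle=|00\rangle$, $|1\rangle=|10\rangle$. Since the two surviving basis vectors carry the same $B$-label $0$, the state manifestly factors as $\bigl(\tfrac{1}{\sqrt2}(u^{1/4}|0\rangle_A+|1\rangle_A)\bigr)\otimes|0\rangle_B$; its coefficient matrix in the ordered bases $\{|0\rangle_A,|1\rangle_A\}$, $\{|0\rangle_B,|1\rangle_B\}$ has a single nonzero column, hence rank $1$. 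Thus $|\mathrm{GS}\rangle_U$ has Schmidt rank $1$ and is a product state, which also records the explicit factorization claimed in the statement.

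For the glued state I would use Proposition~\ref{prop:GSglue}, which identifies $|\mathrm{GS}\rangle_{U'}=\mathbf{X}^{-1}|\mathrm{GS}\rangle_U=\tfrac{1}{\sqrt2}\bigl(u^{1/4}|3\rangle+|0\rangle\bigr)$ (using $\mathbf{X}^{-1}|0\rangle=|3\rangle$, $\mathbf{X}^{-1}|1\rangle=|0\rangle$ in $\mathbb{Z}/4\mathbb{Z}$). Substituting $|3\rangle=|11\rangle$, $|0\rangle=|00\rangle$ gives $\tfrac{1}{\sqrt2}\bigl(|00\rangle+u^{1/4}|11\rangle\bigr)$, whose coefficient matrix is $\tfrac{1}{\sqrt2}\,\mathrm{diag}(1,u^{1/4})$. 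Since $u\in\mathbb{C}^\times$ forces $u^{1/4}\neq 0$, this matrix has rank $2$, so the Schmidt rank of $|\mathrm{GS}\rangle_{U'}$ is $2$ and the state is entangled. Specializing to $u=1$, the principal branch gives $u^{1/4}=1$, so $|\mathrm{GS}\rangle_{U'}\propto|00\rangle+|11\rangle$, the Bell state.

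The only point needing a word of care is the invariance of the Schmidt rank: it is the rank of the coefficient tensor viewed as a linear map $\mathcal{H}_A^\vee\to\mathcal{H}_B$, hence independent of the chosen orthonormal bases on the two factors, and in particular unaffected by the global nonzero scalar $u^{1/4}$ and the normalization $1/\sqrt2$. I would also remark, for context, that this phenomenon is exactly the failure $\mathbf{X}^{-1}\notin G_{(2,2)}$ — the transport operator does not preserve the product-state (Segre) locus — so it is a concrete instance of the general obstruction of Proposition~\ref{prop:entangling-monodromy-vs-reduction}. There is no substantial obstacle beyond this bookkeeping; the result is immediate from the formulas already in hand.
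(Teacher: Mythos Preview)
Your proof is correct and follows essentially the same approach as the paper: invoke the explicit formulas from Propositions~\ref{prop:localGS} and \ref{prop:GSglue}, translate via the dictionary $|0\rangle=|00\rangle$, $|1\rangle=|10\rangle$, $|3\rangle=|11\rangle$, and read off the Schmidt rank from the rank of the coefficient matrix (one nonzero column for $|\mathrm{GS}\rangle_U$, diagonal with two nonzero entries for $|\mathrm{GS}\rangle_{U'}$). Your additional remarks on basis-invariance of the Schmidt rank and the connection to $\mathbf{X}^{-1}\notin G_{(2,2)}$ are accurate elaborations but not required for the argument.
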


\begin{proof}
The expression for \(|\mathrm{GS}\rangle_U\) is immediate from \eqref{eq:GSU}. For \(|\mathrm{GS}\rangle_{U'}\), use \eqref{eq:GSUprime} and the dictionary \(|3\rangle=|11\rangle\), \(|0\rangle=|00\rangle\). The coefficient matrix of \(|\mathrm{GS}\rangle_{U'}\) in the product basis is diagonal with two nonzero entries, hence has rank \(2\), so the Schmidt rank is \(2\).
\end{proof}

\subsection{Twisted gluing and failure of a global subsystem structure}

The transition element transporting local trivializations is projectively \(g_v=[\mathbf{X}^{-1}]\in \mathrm{PGL}(\mathcal{H})\). Relative to the local bipartite type \((2,2)\), this element is not local: it does not lie in the stabilizer \(G_{(2,2)}\subset \mathrm{PGL}_4\). Consequently, even though one can choose a local locus of product states on each chart, these loci cannot glue to a global relative Segre subscheme. Therefore the family admits no global subsystem structure of type \((2,2)\).

Finally, the underlying obstruction appears as a scalar discrepancy in \(\mathrm{GL}(\mathcal{H})\): if one also considers the monodromy in the \(u\)-direction, one can take \(g_u=[\mathbf{Z}]\). Then the commutator of lifts is
\[
\mathbf{ZX^{-1}Z^{-1}X} = \zeta^{-1}\,\mathrm{id}_{\mathcal{H}} = i^{-1}\,\mathrm{id}_{\mathcal{H}},
\]
a nontrivial scalar of order \(4\). This is the local manifestation of the Brauer twisting responsible for the failure of a global reduction to \(G_{(2,2)}\).

\begin{remark}
Obstruction classes also appear in familiar settings in physics. For example, in condensed matter theory, \emph{band topology} studies how the occupied-band eigenspaces of a gapped Bloch Hamiltonian over the Brillouin torus $T^d$ assemble into a complex vector bundle (the Bloch bundle). The nontriviality of its $U(r)$-valued gluing data underlies the Berry connection/curvature, and characteristic invariants such as Chern numbers may be viewed as detecting the obstruction to choosing a globally consistent frame (or trivialization) of the occupied subbundle.

By contrast, the obstruction considered in this paper concerns the \emph{global compatibility of subsystem structure}. The family of pure-state spaces is allowed to globalize only projectively, and we ask whether a prescribed tensor-product type $\mathbf d$ can be made globally coherent. This amounts to asking equivalently whether the associated $\PGL_n$-torsor admits a reduction to $G_{\mathbf d}$. When such a reduction fails, transition functions necessarily exit the local operation group, so a state that is locally product can become entangled after gluing. These obstructions are therefore different from Berry or Chern invariants of eigenbundles. They arise from globalizing the subsystem decomposition itself rather than from globalizing eigenstates or eigenbundles.

\end{remark}
\section{Terms and notations}
\begingroup
\renewcommand{\arraystretch}{1.15}
\begin{longtable}{@{}p{0.30\textwidth}p{0.66\textwidth}@{}}
\toprule
Symbol & Meaning \\
\midrule
\endhead
\bottomrule
\endlastfoot

$X$ & Base scheme (classical parameter space).\\
$\mathcal A$ & Azumaya algebra on $X$ (family of $n$-level quantum systems).\\
$n$ & Degree of $\mathcal A$.\\
$P\to X$ & Principal $\PGL_n$-torsor associated to $\mathcal A$.\\
$SB(\mathcal A)\to X$ & Severi--Brauer scheme (pure-state fibration).\\[2pt]

$\mathbf d=(d_1,\dots,d_s)$ & Subsystem type with $n=\prod_{i=1}^s d_i$.\\
$\Sigma_{\mathbf d}$ & Segre variety (product-state locus in the split model).\\
$G_{\mathbf d}\subset \PGL_n$ & Stabilizer of $\Sigma_{\mathbf d}$.\\
$P_{\mathbf d}\to X$ & A subsystem structure: reduction of $P$ to $G_{\mathbf d}$.\\
$\Sigma_{\mathbf d}(\mathcal A)\subset SB(\mathcal A)$ & Descended Segre subfibration (product-state locus in families).\\[2pt]

$\Sigma_{\le r}(\mathcal A,\mathbf d)\subset SB(\mathcal A)$ & When $r>1$, Schmidt-rank $\le r$ locus. In general, $\Sigma_{\le r}$ contains both entangled and product states. When $r=1$, product-state locus.\\
$\Sigma_{= r}:=\Sigma_{\le r}\setminus \Sigma_{\le r-1}$ & When $r>1$, Schmidt-rank $r$ stratum. Every state in $\Sigma_{=r}$ is entangled for $r>1$. When $r=1$, product-state stratum. ($\Sigma_{=0}=\emptyset$) \\
$\mathrm{SR}_{\mathbf d}([\psi])$ & Schmidt-rank entanglement measure defined via the filtration $\Sigma_{\le r}$.\\
$\widetilde{\Sigma}_{\le r}(\mathcal A,\mathbf d)$ & Relative incidence resolution of $\Sigma_{\le r}(\mathcal A,\mathbf d)$.\\
$\rho_r:\widetilde{\Sigma}_{\le r}\to \Sigma_{\le r}$ & Canonical (projective birational) resolution map.\\[2pt]

$\Br(X)$ & Brauer group of $X$.\\
$\beta=[\mathcal A]\in \Br(X)$ & Brauer class of $\mathcal A$.\\
$\Br_{\mathbf d}(X)\subset \Br(X)$ & Subset of classes admitting a subsystem structure of type $\mathbf d$.\\
$\ell=\operatorname{lcm}(d_1,\dots,d_s)$ & Torsion bound appearing in the period constraint for $\Br_{\mathbf d}(X)$.\\[2pt]

$\Hilb^{\Sigma_{\mathbf d}}(SB(\mathcal A)/X)$ & subsystem-structure locus in the relative Hilbert scheme.\\
$P/G_{\mathbf d}(\simeq \Hilb^{\Sigma_{\mathbf d}}(SB(\mathcal A)/X))$ & Quotient scheme representing subsystem structures.\\
$\operatorname{Red}_{\mathbf d}(P)$ & Stack of reductions of $P$ to $G_{\mathbf d}$.\\
$\mathfrak L_K(P)$ & Stack of lifts of $P$ through $BK\to B\PGL_n$.\\
$\operatorname{Lift}_L(\tau)$ & Groupoid of $L$ incidence lifts of the reduction to $R$.\\
$Q_R$ & Principal $R$ bundle obtained from the chosen reduction to $R$.\\
$f_R,f_K$ & Classifying maps of $Q_R$ and $P_K$.\\
$\operatorname{Rel}^{m,{\rm univ}}_{R,L}(P_K;E)$ & Relative classes on $P_K/L\to P_K/R$ obtained from $BL\to BR$.\\[2pt]

\end{longtable}
\endgroup

\bibliographystyle{amsalpha}
\bibliography{ref_v2}
\end{document}